\newtheorem{theorem}{Theorem}[section]
\newtheorem*{theorem*}{Theorem}
\newtheorem{definition}{Definition}[section]
\newtheorem{corollary}{Corollary}[section]
\newtheorem{lemma}{Lemma}[section]
\newtheorem{proposition}{Proposition}[section]
\theoremstyle{definition}
\newtheorem{remark}{Remark}[section]
\newcommand{\R}{\mathbb R}
\newcommand{\calC}{\mathcal C}
\newcommand{\calL}{\mathcal L}
\newcommand{\dvol}{ d\text{Vol}_{g}}
\begin{document}

\title[On the Zero First Eigenvalue of the Conformal Laplacian]{The Conformal Laplacian and The Kazdan-Warner Problem: Zero First Eigenvalue Case}
\author[J. Xu]{Jie Xu}
\address{
Department of Mathematics and Statistics, Boston University, Boston, MA, U.S.A.}
\email{xujie@bu.edu}
\address{
Institute for Theoretical Sciences, Westlake University, Hangzhou, Zhejiang Province, China}
\email{xujie67@westlake.edu.cn}

\date{}							

\maketitle

\begin{abstract} In this article, we first show that given a smooth function $ S $ either on closed manifolds $ (M, g) $ or compact manifolds $ (\bar{M}, g) $ with non-empty boundary, both for dimensions at least $ 3 $, the condition $ S \equiv 0 $, or $ S $ changes sign and $ \int_{M} S \dvol < 0 $ (with zero mean curvature if the boundary is not empty), is both the necessary and sufficient condition for prescribing scalar curvature problems within conformal class $ [g] $, provided that the first eigenvalue of the conformal Laplacian is zero. We then extend the same necessary and sufficient condition, in terms of prescribing Gauss curvature function and zero geodesic curvature, to compact Riemann surfaces with non-empty boundary, provided that the Euler characteristic is zero. These results are the first full extensions since the results of Kazdan and Warner \cite{KW2} on 2-dimensional torus, and of Escobar and Schoen \cite{ESS} on closed manifolds for dimensions $ 3 $ and $ 4 $. We then give results of prescribing nonzero scalar and mean curvature problems on $ (\bar{M}, g) $, still with zero first eigenvalue and dimensions at least  $ 3 $. Analogously, results of prescribing Gauss and geodesic curvature problems on compact Riemann surfaces with boundary are given for zero Euler characteristic case. Lastly, we show a generalization of the Han-Li conjecture. Technically the key step for manifolds with dimensions at least $ 3 $ is to apply both the local variational methods, local Yamabe-type equations and a new version of the monotone iteration scheme. The key features include the smoothness of the upper solution, the technical difference between constant and non-constant prescribing scalar curvature functions, etc.
\end{abstract}

\section{Introduction}
In this article, we give necessary and sufficient conditions for the prescribed scalar curvature problems within a conformal class of metrics $ [g] $ on both closed manifolds $ (M, g) $ and compact manifolds $ (\bar{M}, g) $ with non-empty smooth boundary $ \partial M $, with dimensions $ n = \dim M \geqslant 3 $ or $ n = \dim \bar{M} \geqslant 3 $, provided that the first eigenvalue of the conformal Laplacian  (possibly with appropriate Robin boundary condition) is zero. We also give necessary and sufficient conditions for prescribing Gauss curvature problems under conformal deformation on compact Riemann surface with non-empty boundary. This problem was solved on closed Riemann surface by Kazdan and Warner \cite{KW2} in 1974 for the analogously zero Euler characteristic case. When the dimension is either $ 3 $ or $ 4 $, this problem was solved by Escobar and Schoen \cite{ESS} in 1986. Based on our best understanding, this article is the first progress in this direction since then. The results here completely solve this problem for closed manifolds, and for compact manifolds with non-empty, smooth, minimal boundary (zero mean curvature after some conformal change), also on compact Riemann surfaces with zero geodesic curvature after some appropriate conformal change. Our methods, which involves local variational method, construction of lower and upper solutions of a nonlinear PDE, and the monotone iteration scheme, are not dimensionally specific for all manifolds with dimensions at least $ 3 $. On $ 2 $-dimensional case, we apply a variational method since the PDE we are dealing with is different.

In addition, we also show the existence of the positive, smooth solution of a local Yamabe equation with Dirichlet boundary condition, for which the coefficient functions of the nonlinear term $ u^{\frac{n + 2}{n - 2}} $ is non-constant. This result is a generalization of the local Yamabe equation with constant coefficient functions on nonlinear terms \cite{XU6}; this local result plays a key role in solving global problems mentioned above, when the dimensions of the manifolds are at least $ 3 $. We also give new results for prescribing non-zero scalar and mean curvature functions under conformal deformation on $ (\bar{M}, g) $ with zero first eigenvalue of the conformal Laplacian. Due to the new version of the monotone iteration scheme, we show an extension of the Han-Li conjecture, which was first mentioned in \cite{HL} and completely proved in \cite{XU5}.

Given a compact manifold whose dimension is at least $ 3 $, possibly with boundary, the Kazdan-Warner problem considers the prescribed scalar and mean curvature functions for a metric $ \tilde{g} $, which is pointwise conformal to the original metric $ g $. Interchangably we call $ \tilde{g} \in [g] $ be a Yamabe metric. Let $ R_{g} $ and $ h_{g} $ be the scalar curvature and mean curvature of a metric $ g $. Let $ a = \frac{4(n - 1)}{n -2} $, $ p = \frac{2n}{n - 2} $ and let $ -\Delta_{g} $ be the positive definite Laplace-Beltrami operator. We define $ \eta_{1} $ to be the first eigenvalue of the conformal Laplacian $ \Box_{g}u : = -a\Delta_{g} u + R_{g} u $ on closed manifolds $ (M, g) $
\begin{equation*}
\Box_{g} u = -a\Delta_{g} \varphi + R_{g} \varphi = \eta_{1} \varphi \; {\rm in} \; M.
\end{equation*}
Here the positive function $ \varphi \in \calC^{\infty}(M) $ is the associated first eigenfunction. Similarly, we define the first eigenvalue $ \eta_{1}' $ of the conformal Laplacian $ \Box_{g} $ with Robin boundary condition on compact manifolds with non-empty boundary $ \partial M $
\begin{equation*}
\Box_{g} u = -a\Delta_{g} \varphi' + R_{g} \varphi' = \eta_{1}' \varphi' \; {\rm in} \; M, B_{g} u : = \frac{\partial \varphi'}{\partial \nu} + \frac{2}{p - 2} h_{g} \varphi' = 0 \; {\rm on} \; \partial M
\end{equation*}
with the associated first eigenfunction $ \varphi' $.

\medskip

The first two main results for this article are listed as follows, which are proven in Theorem \ref{zero:thm1} and Theorem \ref{zero:thm2}, respectively. We would like to point out here that almost the same methods are applied to both the closed manifolds case and compact manifolds with boundary case; these methods have no boundary issues at all.
\begin{theorem}\label{intro:thm1}
Let $ (M, g) $ be a closed manifold, $ n = \dim M \geqslant 3 $. Let $ S \in \calC^{\infty}(M) $ be a given function. Assume that $ \eta_{1} = 0 $. If the function $ S $ satisfies
\begin{equation*}
\begin{split}
& S \equiv 0; \\
& \text{or $ S $ changes sign and} \; \int_{M} S \dvol < 0,
\end{split}
\end{equation*}
then $ S $ can be realized as a prescribed scalar curvature function of some pointwise conformal metric $ \tilde{g} \in [g] $.
\end{theorem}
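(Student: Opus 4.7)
My plan is to reduce the problem to a scalar-flat representative in $[g]$ and then attack the resulting critical semilinear equation by the new monotone iteration scheme announced in the abstract, with carefully constructed sub- and super-solutions.

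\medskip

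\textbf{Step 1 (normalization).} Since $\eta_{1}=0$, the positive first eigenfunction $\varphi\in\calC^{\infty}(M)$ satisfies $\Box_{g}\varphi=0$, so the conformal metric $g_{0}:=\varphi^{p-2}g$ has $R_{g_{0}}\equiv 0$ by the transformation law $R_{g_{0}}\varphi^{p-1}=\Box_{g}\varphi$. Replacing $g$ by $g_{0}$, it is enough to produce a smooth positive $u$ on the scalar-flat $(M,g)$ satisfying
\begin{equation*}
-a\Delta_{g}u = S\,u^{p-1},
\end{equation*}
since then $\tilde{g}=u^{p-2}g\in[g]$ has scalar curvature $S$. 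In the trivial case $S\equiv 0$, take $u\equiv 1$.

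\medskip

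\textbf{Step 2 (monotone iteration).} For the remaining case I would invoke the new monotone iteration scheme. This calls for a smooth positive super-solution $u_{+}$ with $-a\Delta_{g}u_{+}\geqslant S u_{+}^{p-1}$, a smooth positive sub-solution $u_{-}\leqslant u_{+}$ with $-a\Delta_{g}u_{-}\leqslant S u_{-}^{p-1}$, and a constant $K>0$ large enough that $u\mapsto Su^{p-1}+Ku$ is nondecreasing on $[\inf u_{-},\sup u_{+}]$. Iterating the linear problems
\begin{equation*}
-a\Delta_{g}u_{n+1}+K u_{n+1}=S u_{n}^{p-1}+K u_{n},\qquad u_{0}=u_{+},
\end{equation*}
produces a monotone decreasing sequence trapped in $[u_{-},u_{+}]$ whose limit $u$ is smooth by elliptic regularity (using that $u\geqslant u_{-}>0$ keeps the nonlinearity smooth) and solves the equation, with $\tilde g=u^{p-2}g$ realizing $S$.

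\medskip

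\textbf{Step 3 (main obstacle).} The delicate point is the construction of the super-solution $u_{+}$. Positive constants cannot dominate $Su^{p-1}$ where $S>0$, and the critical exponent $p=\frac{2n}{n-2}$ blocks naive variational or perturbative constructions. Following the program outlined in the introduction, I would use the local Yamabe-type equation with non-constant coefficient developed earlier in the paper (a generalization of \cite{XU6}): on an open set $U$ smoothly enclosing $\{S>0\}$ with $\overline{U}\subsetneq M$, solve the Dirichlet problem $-a\Delta_{g}v=S v^{p-1}$ in $U$ with $v=0$ on $\partial U$, obtaining a smooth positive $v$ on $U$. I would then assemble a global super-solution by smoothly splicing $v$ on $U$ with a large positive constant (or a solution of a suitable linear auxiliary problem) on $M\setminus U$; the hypothesis $\int_{M}S\dvol<0$ is what makes the auxiliary linear step solvable and controls the cutoff interaction terms in the splicing. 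The sub-solution is comparatively easy: $u_{-}=\epsilon\psi$ for a suitable positive $\psi$ (e.g. a principal eigenfunction of an appropriate perturbation of $\Box_{g}$) with $\epsilon>0$ small enough to secure both $u_{-}\leqslant u_{+}$ and the sub-solution inequality. Producing the smooth global super-solution with the right positivity and differentiability is the technical heart of the argument, and is precisely the contribution flagged in the abstract.
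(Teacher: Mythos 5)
Your Step 1 and the overall architecture (normalize to a scalar-flat representative, then trap a solution of $-a\Delta_g u = Su^{p-1}$ between a sub- and a super-solution via monotone iteration) match the paper. But both of your constructions in Step 3 have the roles essentially reversed, and each fails as stated. First, the sub-solution $u_-=\epsilon\psi$ cannot work: since $p-1>1$, the inequality $-a\Delta_g(\epsilon\psi)\leqslant S(\epsilon\psi)^{p-1}$ forces, upon dividing by $\epsilon$ and letting $\epsilon\to 0$, that $-a\Delta_g\psi\leqslant 0$ everywhere on the closed manifold $M$; integrating gives $\Delta_g\psi\equiv 0$, so $\psi$ is constant, and then the inequality reads $0\leqslant S$, contradicting that $S$ changes sign. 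The sub-solution is in fact the step that requires the local Yamabe equation: the paper (Proposition \ref{zero:prop3}) first makes a conformal change (Proposition \ref{zero:prop2}) so that $R_{\tilde g}<0$ on a \emph{small} ball inside $\{S>0\}$ (Proposition \ref{local:prop2} only applies to domains of small volume and diameter with $R_g<0$, not to a neighborhood of all of $\{S>0\}$), solves the local Dirichlet problem there, and extends by zero to get a nonnegative weak sub-solution. Extension by zero (a max of sub-solutions) is the direction in which gluing is legitimate; your proposed splicing of the Dirichlet solution $v$ (which vanishes on $\partial U$) underneath a large constant to form a \emph{super}-solution goes the wrong way, since a max of super-solutions need not be a super-solution and the cutoff derivative terms at $\partial U$ are uncontrolled there.

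Second, the actual source of the global super-solution is not the local Yamabe equation at all but the Kazdan--Warner substitution $v=u^{2-p}$ (Lemma \ref{zero:lemma1}), which converts $-a\Delta_g u\geqslant Su^{p-1}$ into $-a\Delta_g v+\frac{(p-1)a}{p-2}\frac{|\nabla_g v|^2}{v}\leqslant (2-p)S$. One solves the \emph{linear} equation $-a\Delta_g v_0=(2-p)S-\gamma$ with $\gamma=\frac{2-p}{\mathrm{Vol}_g(M)}\int_M S\,d\mathrm{Vol}_g>0$ (solvable because the right side has zero mean), and adds a large constant $C$ so that $v=v_0+C>0$ and the gradient term $\frac{(p-1)a}{p-2}\frac{|\nabla_g v_0|^2}{v_0+C}<\gamma$; this is precisely where $\int_M S\,d\mathrm{Vol}_g<0$ enters. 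You correctly sensed that the integral condition makes "an auxiliary linear step" solvable, but without the substitution $v=u^{2-p}$ there is no such linear step, and a large constant alone cannot be a super-solution where $S>0$. Finally, one still must arrange $u_+\geqslant u_-$, which the paper does by gluing the local solution into the Kazdan--Warner candidate using the extra margin from replacing $S$ by $S+\gamma_0$ (Corollary \ref{zero:cor1} and Proposition \ref{zero:prop4}); this ordering issue is absent from your proposal. As written, neither the sub- nor the super-solution exists, so the iteration in Step 2 cannot start.
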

\begin{theorem}\label{intro:thm2}
Let $ (\bar{M}, g) $ be a compact manifold with non-empty smooth boundary $ \partial M $, $ n = \dim \bar{M} \geqslant 3 $. Let $ S \in \calC^{\infty}(\bar{M}) $ be a given function. Assume that $ \eta_{1}' = 0 $. If the function $ S $ satisfies
\begin{equation*}
\begin{split}
& S \equiv 0; \\
& \text{or $ S $ changes sign and} \; \int_{M} S \dvol < 0,
\end{split}
\end{equation*}
then $ S $ can be realized as a prescribed scalar curvature function of some pointwise conformal metric $ \tilde{g} \in [g] $ with minimal boundary ($ h_{\tilde{g}} \equiv 0 $).
\end{theorem}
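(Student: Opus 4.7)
The plan is to mirror the approach used for Theorem~\ref{intro:thm1} and adapt it to the Robin boundary setting. Since $\eta_{1}' = 0$, there exists a positive first eigenfunction $\varphi' \in \calC^{\infty}(\bar M)$ satisfying $\Box_g \varphi' = 0$ in $M$ and $B_g \varphi' = 0$ on $\partial M$. By the standard conformal transformation laws for scalar and boundary mean curvature, the conformal metric $g_0 = \varphi'^{\,p-2} g$ has $R_{g_0} \equiv 0$ and $h_{g_0} \equiv 0$, so without loss of generality I may assume $R_g \equiv 0$ and $h_g \equiv 0$ from the outset. Seeking $\tilde g = u^{p-2} g$ with prescribed scalar curvature $S$ and zero mean curvature then reduces to finding a positive $u \in \calC^{\infty}(\bar M)$ solving
\begin{equation*}
-a \Delta_g u = S u^{p-1} \text{ in } M, \qquad \frac{\partial u}{\partial \nu} = 0 \text{ on } \partial M.
\end{equation*}
The case $S \equiv 0$ is handled by $u \equiv 1$. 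In the sign-changing case, integration against $1$ gives the compatibility $\int_M S u^{p-1}\,\dvol = 0$, consistent with $S$ changing sign, while the hypothesis $\int_M S\,\dvol < 0$ enters through the sub-solution.

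To produce a solution in the sign-changing case I plan to follow the three-ingredient strategy announced in the abstract: build an ordered pair $0 < u_- \leq u_+$ in $\calC^{\infty}(\bar M)$ of sub- and super-solutions compatible with the Neumann boundary condition, and then apply the new monotone iteration scheme around the auxiliary operator $-a\Delta_g + K$ (with $K > 0$ sufficiently large) to produce a smooth positive solution. The construction of $u_+$ is the main use of the generalized local Yamabe equation: enclose the open set $M_+ = \{S > 0\}$ in a smoothly bounded sub-domain $U \Subset M$ kept away from $\partial M$, and produce a smooth positive solution $v$ of $-a\Delta_g v = S v^{p-1}$ on $U$ with constant Dirichlet data $v = \delta$ on $\partial U$ for a carefully chosen $\delta > 0$; this is the non-constant coefficient generalization of \cite{XU6} flagged in the abstract. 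Since $S \leq 0$ on $\bar M \setminus U$, the constant $\delta$ is a super-solution there, and a careful smoothing of the piecewise function equal to $v$ on $U$ and $\delta$ on $\bar M \setminus U$ is upgraded to a global $\calC^{\infty}(\bar M)$ super-solution, constant in a collar of $\partial M$, hence trivially satisfying the Neumann condition.

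For the sub-solution I would apply the local variational argument announced in the abstract together with the hypothesis $\int_M S\,\dvol < 0$ to produce a small positive $u_- \in \calC^{\infty}(\bar M)$ with $u_- \leq u_+$ and $\partial_\nu u_- = 0$ on $\partial M$, satisfying the sub-solution inequality globally; on the set $M_- = \{S < 0\}$ the inequality is driven by the negativity of $S$, and on $M_+$ it follows for sufficiently small $u_-$ once the linear correction from the variational construction is balanced against the nonlinear term. The principal obstacle, and the hardest step in the argument, is the smoothness of the super-solution across $\partial U$: the piecewise definition is a priori only $\calC^{0}$, and smoothing it while simultaneously preserving positivity and the super-solution inequality is precisely the technical novelty of the generalized local Yamabe theory, interacting with the boundary-aware version of the monotone iteration tailored to the Robin-reduced Neumann setting. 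Once the ordered pair is in hand, standard elliptic regularity and maximum-principle arguments applied to the iteration yield a positive $u \in \calC^{\infty}(\bar M)$ solving the desired equation and producing the prescribed conformal metric $\tilde g$ with $h_{\tilde g} \equiv 0$.
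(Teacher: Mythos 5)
Your reduction to the Neumann problem $-a\Delta_g u = Su^{p-1}$ in $M$, $\partial u/\partial\nu = 0$ on $\partial M$ via the first eigenfunction is correct and matches the paper (Theorem \ref{zero:thm2} invokes the Escobar problem, but conformal change by $\varphi'$ does the same job). After that, however, you have swapped the roles of the two hypotheses, and this creates genuine gaps. In the paper the hypothesis $\int_M S\,\dvol < 0$ is what produces the \emph{super}-solution: by the substitution $v = u^{2-p}$ (Lemma \ref{zero:lemma1}), the super-solution inequality becomes $-a\Delta_g v + \frac{(p-1)a}{p-2}\frac{|\nabla_g v|^2}{v} \leqslant (2-p)S$ with Neumann condition, and one solves the \emph{linear} Neumann problem $-a\Delta_g v_0 = (2-p)(S+\gamma_0) - (2-p)\gamma$, which is solvable precisely because $(2-p)\gamma$, the mean of $(2-p)(S+\gamma_0)$, is strictly positive — i.e.\ because $\int_M S\,\dvol<0$; adding a large constant then absorbs the gradient term since $(2-p)\gamma>0$. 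Your super-solution instead rests on solving the critical-exponent Dirichlet problem $-a\Delta_g v = Sv^{p-1}$ with positive constant data $\delta$ on a domain $U$ containing all of $\{S>0\}$. No such existence result is available (the paper's local theory, Propositions \ref{local:prop2}--\ref{local:prop3}, requires \emph{small} domains and \emph{zero} Dirichlet data; on a large $U$ the comparison $K<K_0$ with the Sobolev constant can fail), the glued function $v\cup\delta$ is not a weak super-solution unless the normal derivative of $v$ at $\partial U$ has the correct sign (which nothing guarantees), and the construction nowhere uses $\int_M S\,\dvol<0$ — a hypothesis the necessity half of the theorem shows cannot be dispensed with.

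The sub-solution is also wrong as described. A ``small positive $u_-\in\calC^\infty(\bar M)$'' cannot satisfy $-a\Delta_g u_- \leqslant Su_-^{p-1}$ on $\{S<0\}$: for a near-constant positive function the left side is near zero while the right side is strictly negative there, so negativity of $S$ works \emph{against} the sub-solution inequality, not for it. The correct construction (Proposition \ref{zero:prop3}) takes the positive solution of the local Yamabe equation on a small ball inside $\{S>0\}$ (after a preliminary conformal change making $R_g<0$ there, Proposition \ref{zero:prop2}) and extends it by zero; this $u_-$ is only $\calC^0(\bar M)\cap H^1(M,g)$, vanishes in a collar of $\partial M$ (so the Neumann condition is trivial), and uses only that $S$ changes sign. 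Finally, the ordering $u_-\leqslant u_+$ and the smoothness of $u_+$ are obtained in the paper by first strengthening the super-solution inequality to $-a\Delta_g u \geqslant (S+\gamma_0)u^{p-1}$ (Corollary \ref{zero:cor1}, again using $\int_M S\,\dvol<0$) and spending the slack $\gamma_0 u^{p-1}$ in the gluing of Proposition \ref{zero:prop4}; your construction has no such slack, so the step you yourself identify as the ``principal obstacle'' is not resolved.
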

When the manifold is a compact Riemann surface with non-empty smooth boundary, we denote $ K_{g} $ and $ \sigma_{g} $ be the Gauss and geodesic curvature, respectively. The next result, proven in Theorem \ref{de2:thm1}, is the $ 2 $-dimensional analogy of Theorem \ref{intro:thm2}.
\begin{theorem}\label{intro:thm3}
Let $ (\bar{M}, g) $ be a compact Riemann surface with non-empty smooth boundary $ \partial M $. Let $ K \in \calC^{\infty}(\bar{M}) $ be a given function. Assume that $ \chi(\bar{M}) = 0 $. If the function $ K $ satisfies
\begin{equation*}
\begin{split}
& K \equiv 0; \\
& \text{or $ K $ changes sign and} \; \int_{M} K \dvol < 0,
\end{split}
\end{equation*}
then $ K $ can be realized as a prescribed Gauss curvature function of some pointwise conformal metric $ \tilde{g} \in [g] $ with $ \sigma_{\tilde{g}} \equiv 0 $.
\end{theorem}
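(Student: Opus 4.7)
The plan is to first use the zero Euler characteristic hypothesis together with Gauss--Bonnet to reduce to a background metric with vanishing Gauss and geodesic curvatures, and then solve the resulting semilinear Neumann problem by constrained minimization. Under a conformal change $\tilde g = e^{2u} g$ on a surface one has $K_{\tilde g} = e^{-2u}(K_g - \Delta_g u)$ and $\sigma_{\tilde g} = e^{-u}(\sigma_g + \partial_\nu u)$. First I would solve the linear Neumann problem $\Delta_g v = K_g$ in $M$, $\partial_\nu v = -\sigma_g$ on $\partial M$; its Fredholm compatibility condition reads $\int_M K_g \dvol + \int_{\partial M} \sigma_g \, ds = 0$, which is exactly Gauss--Bonnet for $\chi(\bar M) = 0$, and $v \in \calC^\infty(\bar M)$ by elliptic regularity. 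After replacing $g$ by $e^{2v} g \in [g]$, I may assume $K_g \equiv 0$ and $\sigma_g \equiv 0$, so the problem reduces to finding $u \in \calC^\infty(\bar M)$ with $-\Delta_g u = K e^{2u}$ in $M$ and $\partial_\nu u = 0$ on $\partial M$. The case $K \equiv 0$ is then trivial (take $u \equiv 0$), and the work is in the sign-changing case with $\int_M K \dvol < 0$.

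For the sign-changing case the plan is to adapt the Kazdan--Warner variational scheme used on the torus in \cite{KW2}. Integrating the equation shows any solution must satisfy $\int_M K e^{2u} \dvol = 0$, and since $K$ changes sign the admissible set
\begin{equation*}
\mathcal A = \left\{ u \in H^1(M) : \int_M u \dvol = 0, \; \int_M K e^{2u} \dvol = 0 \right\}
\end{equation*}
is nonempty (found by a one-parameter deformation between regions of opposite sign and the intermediate value theorem). I would minimize the Dirichlet energy $J(u) = \tfrac12 \int_M |\nabla u|_g^2 \dvol$ over $\mathcal A$. Any minimizing sequence $\{u_n\}$ has $\|\nabla u_n\|_{L^2}$ bounded and mean zero, hence is bounded in $H^1$ by the Neumann Poincar\'e inequality, so a subsequence converges weakly to some $u_\infty$. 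The Trudinger--Moser inequality on the compact surface $\bar M$ gives a uniform bound on $\int_M e^{p u_n^2} \dvol$ for a small $p > 0$, which upgrades $e^{2u_n} \to e^{2u_\infty}$ strongly in every $L^q(M)$, $q < \infty$. Both constraints then pass to the limit, so $u_\infty \in \mathcal A$, and weak lower semicontinuity of $J$ yields $J(u_\infty) = \inf_{\mathcal A} J$.

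Next I would analyze the Lagrange multipliers. The minimizer satisfies
\begin{equation*}
-\Delta_g u_\infty = \lambda K e^{2u_\infty} + \mu \;\; \text{in } M, \qquad \partial_\nu u_\infty = 0 \;\; \text{on } \partial M,
\end{equation*}
for some $\lambda, \mu \in \R$. Integrating and using $\int_M K e^{2u_\infty} \dvol = 0$ reduces the right-hand side to $\mu \cdot \mathrm{Vol}_g(M)$, forcing $\mu = 0$. Multiplying the equation by $e^{-2u_\infty}$ and integrating by parts (the Neumann boundary terms vanish) gives
\begin{equation*}
-2 \int_M |\nabla u_\infty|_g^2 \, e^{-2u_\infty} \dvol = \lambda \int_M K \dvol.
\end{equation*}
Since the left-hand side is nonpositive and $\int_M K \dvol < 0$, this forces $\lambda \geq 0$; if $\lambda = 0$ then $u_\infty$ is constant, hence identically zero by the mean-zero constraint, but then $\int_M K e^{2u_\infty} \dvol = \int_M K \dvol \ne 0$, contradicting $u_\infty \in \mathcal A$. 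Therefore $\lambda > 0$, and setting $\tilde u := u_\infty + \tfrac12 \log \lambda$ yields a weak solution of the target PDE; standard elliptic regularity with Neumann boundary conditions upgrades $\tilde u$ to $\calC^\infty(\bar M)$, and $\tilde g = e^{2\tilde u} g \in [g]$ realizes $K_{\tilde g} = K$ with $\sigma_{\tilde g} \equiv 0$.

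The main obstacle will be the Trudinger--Moser step on a surface with boundary: one needs enough compactness to pass to the limit in the nonlinear constraint $\int_M K e^{2u_n} \dvol = 0$, and only the qualitative strong $L^q$ convergence of $e^{2u_n}$ is required, not the sharp exponential constant. A second delicate point is the strict positivity of the Lagrange multiplier $\lambda$, which is exactly where the sign hypothesis $\int_M K \dvol < 0$ enters; this mirrors the mechanism Kazdan and Warner exploit on the closed torus, and adapting it to the Neumann setting is routine once the boundary terms in the integration by parts are tracked.
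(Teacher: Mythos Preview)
Your proposal is correct and follows essentially the same route as the paper: reduce to a flat background with zero geodesic curvature, then minimize the Dirichlet energy over the constraint set $\{u\in H^1: \int_M u=0,\ \int_M Ke^{2u}=0\}$, use Trudinger--Moser compactness to pass to the limit, and identify the Lagrange multipliers by testing against $1$ and $e^{-2u}$. The only cosmetic differences are that you construct the flat background explicitly via a linear Neumann problem (the paper simply invokes uniformization), and you are slightly more careful than the paper in ruling out the degenerate case $\lambda=0$.
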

As introduced in the Yamabe problem \cite{PL}, the Kazdan-Warner problem for prescribing functions $ S, H $ on compact manifolds with dimensions at least $ 3 $ is reduced to the existence of the positive, smooth solution of the following PDEs, where the first one below is for closed manifolds, and the second one below is for compact manifolds with non-empty smooth boundary:
\begin{equation}\label{intro:eqn1}
-a\Delta_{g} u + R_{g} u = S u^{p-1} \; {\rm in} \; M.
\end{equation}
\begin{equation}\label{intro:eqn2}
-a\Delta_{g} u + R_{g} u = S u^{p-1} \; {\rm in} \; M, \frac{\partial u}{\partial \nu} + \frac{2}{p-2} h_{g} u = \frac{2}{p-2} H u^{\frac{p}{2}}.
\end{equation}
We point out that the result of Theorem \ref{intro:thm2} is a special case of (\ref{intro:eqn2}) for $ H \equiv 0 $. One key technique to solve these PDEs is a new version of the monotone iteration scheme for nonlinear elliptic PDEs, which was given in Theorem \ref{pre:thm4}. The monotone iteration scheme requires the constructions of upper and lower solutions. As shown in \cite{XU4, XU5, XU6, XU7, XU3}, we use the solution of some local Yamabe equation to construct the global lower solution, and applying gluing technique between the local solution and some candidate of upper solution to construct the global upper solution. We would like to point out two key features: (a) the upper solution we construct is a smooth function, not just piecewise smooth; (b) the iterative methods are different, depending on whether the prescribed scalar curvature is a globally constant function or not. This procedure is first developed for solving the Yamabe problem, the Escobar problem, and prescribed scalar curvature problems for positive first eigenvalue, or equivalently, positive Yamabe invariant case. The iterative method is inspired by previous work on nonlinear elliptic PDEs and Yamabe-type problems on Euclidean spaces in \cite{XU2, XU}.

On compact Riemann surface with non-empty smooth boundary, the problem of prescribing functions $ K, \sigma \in \calC^{\infty}(\bar{M}) $ is reduced to the existence of some smooth solution $ u $ of the following PDE
\begin{equation}\label{intro:eqn3}
-\Delta_{g} u + K_{g} = K e^{2u} \; {\rm in} \; M, \frac{\partial u}{\partial \nu} + \sigma_{g} u = \sigma e^{u} \; {\rm on} \; \partial M
\end{equation}
When $ \chi(\bar{M}) = 0 $, we apply a variational method due to Kazdan and Warner \cite[Thm.~5.3]{KW2}. The reason to use a different method, instead of the monotone iteration scheme is partially due to the lack of a local solution. Note that $ \chi(\bar{M}) = 0 $ is a $ 2 $-dimensional analogy of the classification $ \eta_{1} = 0 $ or $ \eta_{1}' = 0 $ in higher dimensional case.
\medskip

We also give results for prescribing scalar and non-zero mean curvature functions on $ (\bar{M}, g) $, provided that $ \eta_{1}' = 0 $. The next main result, which was proven in Theorem \ref{zerog:thm1}, Corollary \ref{zerog:cor1} and Corollary \ref{zerog:cor2}, is given below:
\begin{theorem}\label{intro:thm4}
Let $ (\bar{M}, g) $ be a compact manifold with non-empty smooth boundary $ \partial M $, $ n = \dim \bar{M} \geqslant 3 $. Let $ S, H \in \calC^{\infty}(\bar{M}) $ be given nonzero functions. Assume that $ \eta_{1}' = 0 $. If the function $ S $ satisfies
\begin{equation*}
\text{$ S $ changes sign and} \; \int_{M} S \dvol < 0,
\end{equation*}
then there exists a pointwise conformal metric $ \tilde{g} \in [g] $ that has scalar curvature $ R_{\tilde{g}} = S $ and $ h_{\tilde{g}} = cH $ for some small enough positive constant $ c $.
\end{theorem}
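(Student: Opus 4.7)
The plan is to reduce to a perturbation of Theorem \ref{intro:thm2}. Since $S$ changes sign with $\int_{M} S\,\dvol < 0$ and $\eta_{1}' = 0$, Theorem \ref{intro:thm2} furnishes a conformal metric $g_{0} \in [g]$ with $R_{g_{0}} = S$ and $h_{g_{0}} \equiv 0$. It thus suffices to solve, for a positive $u \in \calC^{\infty}(\bar{M})$, the mixed boundary value problem
\begin{equation*}
-a \Delta_{g_{0}} u + S u = S u^{p-1} \text{ in } M, \qquad \frac{\partial u}{\partial \nu_{g_{0}}} = \frac{2}{p-2}\, c H u^{p/2} \text{ on } \partial M,
\end{equation*}
since then $\tilde g = u^{p-2} g_{0}$ has $R_{\tilde g} = S$ and $h_{\tilde g} = c H$. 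The constant $u \equiv 1$ already satisfies the interior PDE exactly, so at $c = 0$ only the boundary condition is missing.

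The heart of the argument is the construction of ordered sub/super-solutions with spread of order $c$. I would take the ansatz $u_{\pm} = 1 \pm c w$ for a strictly positive auxiliary function $w \in \calC^{\infty}(\bar{M})$ to be chosen. A direct Taylor expansion in $c$ shows that both the interior and the boundary sub- and super-solution inequalities reduce, up to $O(c^{2})$ remainders, to the single pair of strict linear conditions
\begin{equation*}
-a\Delta_{g_{0}} w - (p-2) S w \geq \delta \text{ in } M, \qquad \frac{\partial w}{\partial \nu_{g_{0}}} \geq \frac{2}{p-2} |H| + \delta \text{ on } \partial M,
\end{equation*}
for some fixed $\delta > 0$. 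Existence of such a $w$ follows by solving a linear inhomogeneous Neumann problem for the self-adjoint operator $L := -a\Delta_{g_{0}} - (p-2) S$; although the zeroth-order coefficient is sign-changing, the hypothesis $\int_{M} S \, \dvol < 0$ and the conformal invariance of the sign of $\eta_{1}'$ supply enough control to navigate the Fredholm alternative and produce a smooth positive $w$. For $c \in (0, c_{0}]$ sufficiently small, the pair $u_{\pm} = 1 \pm c w$ is then ordered, strictly positive, and the $O(c)$ Taylor remainders are dominated by $\delta$.

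With the ordered pair in hand, I would invoke the new monotone iteration scheme (Theorem \ref{pre:thm4}), designed precisely for nonlinear equations with nonlinear Robin-type boundary data, to extract a positive solution $u \in [u_{-}, u_{+}]$; elliptic regularity iterated along the scheme yields $u \in \calC^{\infty}(\bar{M})$, so $\tilde g = u^{p-2} g_{0}$ realizes both $R_{\tilde g} = S$ and $h_{\tilde g} = cH$. Corollary \ref{zerog:cor1} and Corollary \ref{zerog:cor2} presumably handle sign variants of $H$ and sharpen the admissible $c$-range. The main obstacle is the construction of $w$: because $S$ is sign-changing, the associated linear Neumann problem is not solvable for arbitrary data, and one must carefully exploit $\int_{M} S \, \dvol < 0$ together with the self-adjoint structure of $L$ to obtain a smooth positive solution that satisfies both strict inequalities simultaneously. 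Once this single analytic ingredient is secured, the passage from sub/super-solutions to a genuine smooth solution is a direct invocation of Theorem \ref{pre:thm4}, and the smallness of $c$ is dictated entirely by the ratio of the Taylor remainders to the margin $\delta$.
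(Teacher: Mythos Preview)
Your strategy is genuinely different from the paper's, and it contains a real gap that you yourself flag but do not close.

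The paper does \emph{not} pass through Theorem \ref{intro:thm2} and then perturb around $u\equiv 1$. Instead it stays in the scalar-flat, mean-flat model metric, builds the sub-solution from the local Yamabe equation (Propositions \ref{local:prop2}/\ref{local:prop3}) extended by zero, and builds the super-solution via the nonlinear substitution $w=u^{2-p}$ of Lemma \ref{zerog:lemma1}. That substitution turns the super-solution inequality into a linear Poisson equation for $w$ plus a gradient term that can be killed by shifting $w$ by a large constant; the boundary inequality then reduces to choosing $c$ small. No spectral information about any linearized operator is ever needed.

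Your route, by contrast, hinges entirely on producing a smooth $w\geqslant 0$ with
\[
-a\Delta_{g_0}w-(p-2)Sw\geqslant \delta \ \text{in }M,\qquad \frac{\partial w}{\partial\nu_{g_0}}\geqslant \frac{2}{p-2}\lvert H\rvert+\delta \ \text{on }\partial M,
\]
and this is precisely where the argument is incomplete. The operator $L=-a\Delta_{g_0}-(p-2)S$ has sign-changing zeroth-order coefficient; nothing in the hypotheses pins down its first Neumann eigenvalue, and the Fredholm alternative gives at best solvability of $Lw=f$ for suitably orthogonal $f$, with no sign control on $w$. Two further points make the hand-waving more serious: (i) the integral hypothesis $\int_M S\,d\mathrm{Vol}_g<0$ is stated in the scalar-flat metric $g$, and after passing to $g_0$ the volume form changes by $u_0^{p}$, so you cannot assume $\int_M S\,d\mathrm{Vol}_{g_0}<0$; (ii) the conformal invariance of $\eta_1'$ tells you that $-a\Delta_{g_0}+S$ has first Neumann eigenvalue zero, but this says nothing directly about $-a\Delta_{g_0}-(p-2)S$. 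Without an explicit construction of $w$ (and I do not see an easy one), the ordered pair $u_\pm=1\pm cw$ is not available, and the invocation of Theorem \ref{pre:thm4} is premature.

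If you want to salvage the perturbative idea, you would need an argument specific to this linearized operator---for instance, showing its first Neumann eigenvalue is positive and then using the positive eigenfunction---but that is a nontrivial analytic step that the paper's nonlinear substitution cleverly sidesteps.
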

Beginning with Aubin \cite{Aubin}, Kazdan and Warner \cite{KW2, KW3, KW}, and Nirenberg \cite{AC}, many people have made great progress in the so-called Kazdan-Warner problem. On closed manifolds, other contribution includes the complete solution of the Yamabe problem due to Yamabe, Trudinger, Aubin and Schoen \cite{PL}, and the prescribed scalar curvature problems on $ \mathbb{S}^{n} $ by Y. Y. Li \cite{YYL}, Schoen and Yau \cite{SY}, Bourguignon and Ezin \cite{BE}, Malchiodi and Mayer \cite{MaMa}, etc. We gave a comprehensive solution of the prescribed scalar curvature problem with positive first eigenvalue in \cite{XU6}.  On compact manifolds with non-empty boundary, Escobar \cite{ESC, ESC2, Escobar2, ESS} introduced the boundary Yamabe problem and discussed many variations in prescribing different types of scalar and mean curvature functions. Other contribution includes the work of Marques, Brendle, X.Z. Chen, etc., see \cite{BM}. For constant scalar and mean curvatures, we gave complete solution of the Escobar problem in \cite{XU4}, and of the Han-Li conjecture in \cite{XU5}. For positive first eigenvalue with Robin condition, we gave a comprehensive discussion on prescribed scalar curvature problem with minimal boundary in \cite{XU6}. For Dirichlet boundary conditions, we gave a generalization of ``Trichotomy Theorem" in \cite{XU7}. 
\medskip

Due to the new monotone iteration scheme in Theorem \ref{pre:thm4}, we can give a generalization of the Han-Li conjecture for positive and negative first eigenvalue cases:
\begin{theorem}\label{intro:thm5}
Let $ (\bar{M}, g) $ be a compact manifold with smooth boundary, $ \dim \bar{M} \geqslant 3 $. Let $ \eta_{1}' $ be the first eigenvalue of the boundary eigenvalue problem $ \Box_{g} = \eta_{1}' u $ in $ M $, $ B_{g} u = 0 $ on $ \partial M $. Then: \\
\begin{enumerate}[(i).]
\item If $ \eta_{1}' < 0 $, (\ref{intro:eqn2}) with constant functions $ S = \lambda \in \R $, $ H = \zeta \in \R $ admits a real, positive solution $ u \in \calC^{\infty}(\bar{M}) $ with some $ \lambda < 0 $ and $ \zeta < 0 $;
\item If $ \eta_{1}' > 0 $, (\ref{intro:eqn2}) with constant functions $ S = \lambda \in \R $, $ H = \zeta \in \R $ admits a real, positive solution $ u \in \calC^{\infty}(\bar{M}) $ with some $ \lambda > 0 $ and $ \zeta < 0 $.
\end{enumerate}
\end{theorem}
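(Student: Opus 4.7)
The plan is to apply the new monotone iteration scheme of Theorem \ref{pre:thm4} directly to (\ref{intro:eqn2}) with constant right-hand sides $S\equiv\lambda$ and $H\equiv\zeta$. Since the scheme converges to a smooth positive solution $u \in \calC^{\infty}(\bar M)$ from any ordered pair of smooth positive sub- and super-solutions $u_- \leq u_+$, the theorem reduces to constructing such a pair in each case, with $\lambda$ and $\zeta$ of the prescribed signs; the conformal metric $u^{p-2}g$ then realizes $R=\lambda$ and $h=\zeta$ automatically.

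As a preparatory step I would normalize the background metric within $[g]$. Using the positive first eigenfunction $\varphi'$ of $\Box_g$ with $B_g\varphi' = 0$, the conformal change $g_0 := (\varphi')^{p-2} g$ yields $h_{g_0} \equiv 0$ on $\partial M$ and $R_{g_0} = \eta_1'(\varphi')^{2-p}$, which has the sign of $\eta_1'$ throughout $\bar M$. A subsequent Escobar-type conformal change (constant scalar curvature with minimal boundary, as established in \cite{XU4, XU5}) lets us further assume $R_g \equiv r$ is a nonzero constant with $\mathrm{sgn}(r)=\mathrm{sgn}(\eta_1')$. In this normalization the interior sub/super-solution inequality for a positive constant $c$ collapses to the one-variable condition $r \leq \lambda c^{p-2}$ (sub) or $r \geq \lambda c^{p-2}$ (super), which is straightforwardly arranged by picking $\lambda$ of the sign of $r$ and adjusting $c$.

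The main obstacle is the boundary inequality: since $\zeta<0$ is required in both parts and $h_g \equiv 0$, every positive constant has $\partial_\nu u \equiv 0$ and fails the sub-solution boundary inequality $\partial_\nu u_- \leq \frac{2}{p-2}\zeta u_-^{p/2}$. To correct this I would take $u_- = c(1 - \delta\psi)$, where $\psi \in \calC^{\infty}(\bar M)$ is a non-negative cutoff supported in a collar of $\partial M$ satisfying $\psi \equiv 0$ and $\partial_\nu \psi \equiv 1$ on $\partial M$, so that $\partial_\nu u_-|_{\partial M} = -c\delta$ and the boundary inequality becomes $\delta \geq \frac{2|\zeta|}{p-2}\, c^{\,p/2 - 1}$, satisfiable for any $\zeta<0$ of sufficiently small magnitude. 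The super-solution is a constant in case (i) (where the sub needs $c$ small and the super needs $C$ large, so the ordering $u_- \leq u_+$ is automatic) and a mirror perturbation $u_+ = C(1 + \delta'\psi)$ in case (ii) (where the sub/super constants are reversed, so both $u_\pm$ must be perturbed off the exact constant solution $c_0^{p-2} = r/\lambda$ and kept ordered by shrinking $\delta,\delta'$). The delicate point, and where I expect the real work, is that the $\psi$-correction injects an $\mathcal{O}(\delta)$ term into the interior sub-solution inequality in the collar, of the form $ac\delta\Delta\psi - rc(p-2)\delta\psi$; its sign is not automatic, so one must either pick $\psi$ carefully (e.g.\ proportional to the first Dirichlet eigenfunction of a collar problem with suitably large eigenvalue) or take $|\zeta|$ — and hence $\delta$ — small enough to be absorbed into the strict interior margin between $r$ and $\lambda c^{p-2}$. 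Once $u_- \leq u_+$ is verified, Theorem \ref{pre:thm4} delivers the desired smooth positive solution $u$, completing both parts simultaneously.
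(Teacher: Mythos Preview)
Your treatment of case (i) differs from the paper's but is essentially workable, modulo a sign slip: a nonnegative cutoff $\psi$ with $\psi|_{\partial M}=0$ necessarily has $\partial_\nu\psi\le 0$, not $+1$, since $\psi$ attains its minimum on $\partial M$. After correcting this (e.g.\ taking $u_-=c(1+\delta\psi)$ with $\partial_\nu\psi=-1$) your perturbed-constant construction for the sub-solution and large constant for the super-solution goes through. The paper instead takes the lower solution to be the first eigenfunction $\varphi$ of a boundary-perturbed problem $\Box_g\varphi=\eta'_{1,\beta}\varphi$, $\partial_\nu\varphi+\tfrac{2}{p-2}(h_g+\beta)\varphi=0$ with small $\beta>0$ (Proposition~\ref{HL:prop1}); this gives the needed strictly negative normal derivative on $\partial M$ for free and avoids the collar analysis.

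For case (ii) your approach has a genuine gap. When $r,\lambda>0$ the interior inequality $rc\le\lambda c^{p-2}$ (sub) forces $c\ge c_0$ while $rc\ge\lambda c^{p-2}$ (super) forces $c\le c_0$, so an ordered pair of near-constant sub/super solutions must both equal $c_0$ to leading order, leaving \emph{no} interior margin outside the collar. Linearizing $u_\pm=c_0(1\pm\delta\psi)$ in the collar, the interior defect is $c_0\delta\bigl[-a\Delta_g\psi-r(p-2)\psi\bigr]$, which must be $\le 0$ for $u_-$ and $\ge 0$ for $u_+$; these are incompatible for a common $\psi$ unless $\psi$ is an exact Dirichlet eigenfunction with eigenvalue $r(p-2)/a$, and using different $\psi$'s destroys $u_-\le u_+$. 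There is no ``strict interior margin'' to absorb anything into. The paper handles case (ii) by an entirely different mechanism (Theorem~\ref{HL:thm3}): it perturbs the operator to $-a\Delta_g+(R_g+\beta)$ with $\beta<0$, uses the positive solution of a \emph{local} Dirichlet Yamabe equation (Proposition~\ref{HL:prop3}) on a small interior ball where $R_g<0$ to build $u_-$ (extended by zero), glues this local solution to the perturbed eigenfunction $\varphi$ to build a smooth $u_+$, runs the iteration (Corollary~\ref{pre:cor1}) to obtain $u_\beta$, and finally passes to the limit $\beta\to 0^-$ via uniform $\mathcal L^r$ bounds. The local-solution-plus-gluing step is precisely what circumvents the ordering obstruction that defeats constants when $\lambda>0$.
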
 
The original Han-Li conjecture \cite{HL} was stated for positive constant mean curvatures. The new monotone iteration scheme released the restriction of the positivity of the prescribed function $ H $ on $ \partial M $.
\medskip

This article is organized as follows:

In \S2, we give essential definition and tools for later sections, like Sobolev spaces, $ W^{s, q} $-type elliptic regularities, etc. We also assume the general backgrounds of elliptic PDE theory and Sobolev embeddings. We then introduce two versions of monotone iteration schemes, one for closed manifolds in Theorem \ref{pre:thm3} and the other for compact manifolds with boundary in Theorem \ref{pre:thm4}.

In \S3, we introduce the theory of the local Yamabe-type equation $ -a\Delta_{g} u + R_{g} u = f u^{p-1} \; {\rm in} \; \Omega, u = 0 \; {\rm on} \; \partial \Omega $ on a small enough Riemannian domain $ (\Omega, g) $. In Proposition \ref{local:prop2}, we show the existence of a positive, smooth solution of the local Yamabe-type equation for non-constant positive function $ f $, which is a generalization of constant function $ f $ given in \cite[Lemma.~3.2]{XU6}. This result holds when $ \dim \Omega \geqslant 3 $ and $ g $ is not locally conformally flat, and thus is not dimensionally specific. When $ g $ is locally conformally flat, we introduce the local solution in Proposition \ref{local:prop3}.

In \S4, we give the necessary and sufficient condition for prescribing scalar curvature function $ S $ on both closed manifolds and compact manifolds with non-empty minimal boundary, with dimensions at least $ 3 $, provided that $ \eta_{1}' = 0 $. The condition, $ S \equiv 0 $ or $ S $ changes sign and $ \int_{M} S \dvol < 0 $, is exactly the same for 2-dimensional torus, and dimensions $ 3 $ and $ 4 $. We construct lower solutions in Proposition \ref{zero:prop3} and upper solutions in Proposition \ref{zero:prop4}. The two major results are given in Theorem \ref{zero:thm1} for closed manifolds $ (M, g) $, and in Theorem \ref{zero:thm2} for compact manifolds $ (\bar{M}, g) $ with non-empty, smooth, minimal boundary.

In \S5, we generalize the prescribing scalar curvature function $ S \in \calC^{\infty}(\bar{M}) $ and mean curvature function $ H \in \calC^{\infty}(\partial M) $ problems discussed in \S4, in which we assume that both $ S $ and $ H $ are non-zero functions. This section begins with an inequality between the average of the scalar curvature and the average of the mean curvature within the conformal class. Using the monotone iteration schemes, we show that if $ S $ changes sign and $ \int_{M} S \dvol < 0 $, then there exists some Yamabe metric $ \tilde{g} \in [g] $ that has scalar curvature $ S $ and mean curvature $ cH $ for some small enough positive constant $ c $. In particular, Theorem \ref{zerog:thm1} associates with the case $ \int_{\partial M} H dS_{g} > 0 $; Corollary \ref{zerog:cor1} is for the case $ \int_{\partial M} H dS_{g} = 0 $; and Corollary \ref{zerog:cor2} is for the case $ \int_{\partial M} H dS_{g} < 0 $. We conjectured that $ S $ changes sign and $ \int_{M} S \dvol < 0 $ is also a necessary condition when $ \eta_{1}' = 0 $, but we can only give partial reasoning.

In \S6, we apply a global variation method to consider necessary and sufficient condition for prescribing Gauss curvature function $ K $ on compact Riemann surface with non-empty boundary for some Yamabe metric $ \tilde{g} \in [g] $ with $ \sigma_{\tilde{g}} = 0 $. The main result in Theorem \ref{de2:thm1} is an extension of the result on $ 2 $-dimensional torus \cite[Thm.~5.3]{KW2}.

In \S7, we give a generalization of the Han-Li conjecture for the following two cases:
\begin{enumerate}[(i).]
\item If $ \eta_{1}' < 0 $, then (\ref{intro:eqn2}) admits a positive, smooth solution with some $ S = \lambda < 0 $ and $ H = \zeta < 0 $; 
\item If $ \eta_{1}' > 0 $, then (\ref{intro:eqn2}) admits a positive, smooth solution with some $ S = \lambda > 0 $ and $ H = \zeta < 0 $.
\end{enumerate}
We show the Case (i) in Theorem \ref{HL:thm2} and the Case (ii) in Theorem \ref{HL:thm3}. Overall, the key step is the new version of the monotone iteration scheme in Theorem \ref{pre:thm4}, which is widely used in proving all major results. In addition, the systematic procedure from local variational method, local Yamabe-type equation, gluing method, and iteration scheme we developed in previous work is also crucial. 

\section{The Preliminaries and The Monotone Iteration Schemes}
In this section, we introduce the Sobolev spaces both on manifolds and on local domains. We then introduce the $ \calL^{q} $-type elliptic regularities, both on closed manifolds and on compact manifolds with non-empty boundary. Next we introduce essential tools for later sections, especially variations of the monotone iteration schemes. The new variation of the monotone iteration scheme is quite useful in dealing with prescribed non-constant scalar and mean curvature problems. We assume the background of standard elliptic theory, the strong and weak maximal principles for second order elliptic operators, Sobolev embeddings for $ W^{s, q} $ -type and $ \calC^{0, \alpha} $-type, the trace theorem, etc.

We begin with a few set-up. Let $ n $ be the dimension of the compact manifold, with or without boundary. Let $ \Omega $ be a connected, bounded, open subset of $ \R^{n} $ with smooth boundary $ \partial \Omega $ equipped with some Riemannian metric $ g $ that can be extended smoothly to $ \bar{\Omega} $. We call $ (\Omega, g) $ a Riemannian domain. Furthermore, let $ (\bar{\Omega}, g) $ be a compact manifold with smooth boundary extended from $ (\Omega, g) $. Throughout this article, we denote $ (M, g) $ to be a closed manifold with $ \dim M \geqslant 3 $, and $ (\bar{M}, g) $ to be a general compact manifold with interior $ M $ and smooth boundary $ \partial M $; we denote the space of smooth functions with compact support by $ \calC_{c}^{\infty} $, smooth functions by $ \calC^{\infty} $, and continuous functions by $ \calC^{0} $.

Foremost, we define Sobolev spaces on $ (M, g) $, $ (\bar{M}, g) $ and $ (\Omega, g) $, both in global expressions and local coordinates.
\begin{definition}\label{pre:def1} Let $ (\Omega, g) $ be a Riemannian domain. Let $ (M, g) $ be a closed  Riemannian $n$-manifold with volume density $\dvol$. Let $u$ be a real valued function. Let $ \langle v,w \rangle_g$ and $ |v|_g = \langle v,v \rangle_g^{1/2} $ denote the inner product and norm  with respect to $g$. 

(i) 
For $1 \leqslant q < \infty $,
\begin{align*}
\mathcal{L}^{q}(\Omega)\ &{\rm is\ the\ completion\ of}\  \left\{ u \in \calC_c^{\infty}(\Omega) : \Vert u\Vert_{\calL^{q}(\Omega)}^q :=\int_{\Omega} \lvert u \rvert^{q} dx < \infty \right\},\\
\mathcal{L}^{q}(\Omega, g)\ &{\rm is\ the\ completion\ of}\ \left\{ u \in \calC_c^{\infty}(\Omega) : \Vert u\Vert_{\calL^{q}(\Omega, g)}^q :=\int_{\Omega} \left\lvert u \right\rvert^{q} d\text{Vol}_{g} < \infty \right\}, \\
\mathcal{L}^{q}(M, g)\ &{\rm is\ the\ completion\ of}\ \left\{ u \in \calC^{\infty}(M) : \Vert u\Vert_{\calL^{q}(M, g)}^q :=\int_{M} \left\lvert u \right\rvert^{q} d\text{Vol}_{g} < \infty \right\}.
\end{align*}

(ii) For $ q = \infty $,
\begin{equation*}
\lVert f \rVert_{\calL^{\infty}(M)} = \inf \lbrace C \geqslant 0 : \lvert f(x) \rvert \leqslant C \; \text{for almost all $ x \in M $} \rbrace.
\end{equation*}

(iii) For $\nabla u$  the Levi-Civita connection of $g$, 
and for $ u \in \calC^{\infty}(\Omega) $ or $ u \in \calC^{\infty}(M) $,
\begin{equation*}
\lvert \nabla^{k} u \rvert_g^{2} := (\nabla^{\alpha_{1}} \dotso \nabla^{\alpha_{k}}u)( \nabla_{\alpha_{1}} \dotso \nabla_{\alpha_{k}} u).
\end{equation*}
\noindent In particular, $ \lvert \nabla^{0} u \rvert^{2}_g = \lvert u \rvert^{2} $ and $ \lvert \nabla^{1} u \rvert^{2}_g = \lvert \nabla u \rvert_{g}^{2}.$\\

(iv) For $ s \in \mathbb{N}, 1 \leqslant p < \infty $,
\begin{align*}
W^{s, q}(\Omega) &= \left\{ u \in \mathcal{L}^{q}(\Omega) : \lVert u \rVert_{W^{s,q}(\Omega)}^{q} : = \int_{\Omega} \sum_{j=0}^{s} \left\lvert D^{j}u \right\rvert^{q} dx < \infty \right\}, \\
W^{s, q}(\Omega, g) &= \left\{ u \in \mathcal{L}^{q}(\Omega, g) : \lVert u \rVert_{W^{s, q}(\Omega, g)}^{q} = \sum_{j=0}^{s} \int_{\Omega} \left\lvert \nabla^{j} u \right\rvert^{q}_g \dvol < \infty \right\}, \\
W^{s, q}(M, g) &= \left\{ u \in \mathcal{L}^{q}(M, g) : \lVert u \rVert_{W^{s, q}(M, g)}^{q} = \sum_{j=0}^{s} \int_{M} \left\lvert \nabla^{j} u \right\rvert^{q}_g \dvol < \infty \right\}.
\end{align*}
\noindent Here $ \lvert D^{j}u \rvert^{q} := \sum_{\lvert \alpha \rvert = j} \lvert \partial^{\alpha} u \rvert^{q} $ in the weak sense. Similarly, $ W_{0}^{s, q}(\Omega) $ is the completion of $ \calC_{c}^{\infty}(\Omega) $ with respect to the $ W^{s, q} $-norm. 

In particular, $ H^{s}(\Omega) : = W^{s, 2}(\Omega) $ and $ H^{s}(\Omega, g) : = W^{s, 2}(\Omega, g) $, $ H^{s}(M, g) : = W^{s, 2}(M, g) $ are the usual Sobolev spaces. We similarly define $H_{0}^{s}(\Omega), H_{0}^{s}(\Omega,g)$.

(v) We define the $ W^{s, q} $-type Sobolev space on $ (\bar{M}', g) $ the same as in (iii) when $ s \in \mathbb{N}, 1 \leqslant q < \infty $.
\end{definition}
\medskip

In general, if we assume the solvability of the PDE
\begin{equation*}
Lu = f \; {\rm in} \; M
\end{equation*}
for some second order elliptic operator $ L $, the standard $ \calL^{p} $-type estimates says
\begin{equation*}
\lVert u \rVert_{W^{2,p}(M, g)} \leqslant C \left( \lVert f \rVert_{\calL^{p}(M, g)} + \lVert u \rVert_{\calL^{p}(M, g)} \right).
\end{equation*}
In order to estimate $ \lVert u \rVert_{\calL^{\infty}} $ and $ \lVert \nabla u \rVert_{\calL^{\infty}} $, we would like to remove the term $ \lVert u \rVert_{\calL^{p}(M, g)} $ on the right side of the estimates. The next two results show that sometimes we can do this.
\medskip

\begin{theorem}\label{pre:thm1}\cite[\S7]{Niren4}
Let $ (M, g) $ be a closed manifold and $ q > n = \dim M $ be a given constant. Let $ L: \calC^{\infty}(M) \rightarrow \calC^{\infty}(M) $ be a uniform second order elliptic operator on $ M $ and can be extended to $ L : W^{2, q}(M, g) \rightarrow \calL^{q}(M, g) $. Let $ f \in \calL^{q}(M, g) $ be a given function. Let $ u \in H^{1}(M, g) $ be a weak solution of the following linear PDE
\begin{equation}\label{pre:eqn1}
L u = f \; {\rm in} \; M.
\end{equation}
Assume that $ \text{Ker}(L) = \lbrace 0 \rbrace $. If, in addition, $ u \in \calL^{q}(M, g) $, then $ u \in W^{2, q}(M, g) $ with the following estimates
\begin{equation}\label{pre:eqn2}
\lVert u \rVert_{W^{2, q}(M, g)} \leqslant \gamma \lVert Lu \rVert_{\calL^{q}(M, g)}
\end{equation}
Here $ \gamma $ depends on $ L, q $ and the manifold $ (M, g) $ and is independent of $ u $.
\end{theorem}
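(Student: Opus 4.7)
The plan is to split the proof into two parts: first establish the standard $\calL^q$ regularity estimate for $u$ and upgrade its regularity to $W^{2,q}(M,g)$, and then invoke the assumption $\operatorname{Ker}(L)=\{0\}$ together with a compactness argument to absorb the lower-order term $\lVert u\rVert_{\calL^q(M,g)}$ on the right-hand side.

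For the first step, I would appeal to standard global $\calL^q$ elliptic regularity for second order uniformly elliptic operators on closed manifolds (e.g. via a partition of unity reducing to interior and patched Calder\'on–Zygmund estimates). Starting from a weak $H^1$ solution $u$ with $Lu = f \in \calL^q(M,g)$ and the extra integrability hypothesis $u \in \calL^q(M,g)$, this yields $u \in W^{2,q}(M,g)$ together with the a priori inequality
\begin{equation*}
\lVert u \rVert_{W^{2,q}(M,g)} \leqslant C\bigl(\lVert Lu \rVert_{\calL^{q}(M,g)} + \lVert u \rVert_{\calL^{q}(M,g)}\bigr),
\end{equation*}
where $C = C(L,q,(M,g))$ is independent of $u$. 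This is the classical starting estimate; no new ideas are needed here.

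The substantive step is to remove the $\lVert u \rVert_{\calL^{q}}$ term. I would argue by contradiction. If the improved estimate (\ref{pre:eqn2}) fails, then for each integer $k$ one can find $u_k \in W^{2,q}(M,g)$ with $\lVert u_k \rVert_{W^{2,q}(M,g)} = 1$ but $\lVert Lu_k \rVert_{\calL^{q}(M,g)} \to 0$. Combined with the inequality above, this forces $\lVert u_k \rVert_{\calL^{q}(M,g)}$ to stay bounded below by a positive constant for large $k$. Since $q > n$, the Rellich–Kondrachov theorem gives a compact embedding $W^{2,q}(M,g) \hookrightarrow \calL^{q}(M,g)$, so after passing to a subsequence, $u_k$ converges strongly in $\calL^{q}(M,g)$ and weakly in $W^{2,q}(M,g)$ to some $u_\infty$ with $\lVert u_\infty \rVert_{\calL^{q}(M,g)} > 0$.

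Passing to the limit in the relation $Lu_k \to 0$ in $\calL^{q}(M,g)$, using that $L$ extends as a bounded linear operator $W^{2,q}(M,g) \to \calL^{q}(M,g)$ so weak limits are preserved, gives $Lu_\infty = 0$. By the hypothesis $\operatorname{Ker}(L) = \{0\}$ this forces $u_\infty \equiv 0$, contradicting $\lVert u_\infty \rVert_{\calL^{q}(M,g)} > 0$. This gives the existence of a constant $\gamma$ as claimed. The main subtlety to verify carefully is that the weak $W^{2,q}$ limit $u_\infty$ really does lie in the kernel of the extended operator (so the no-kernel assumption applies); since $L$ is assumed to extend to a bounded map $W^{2,q}(M,g) \to \calL^{q}(M,g)$, this is a direct consequence of weak continuity, and the rest is routine.
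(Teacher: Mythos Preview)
Your argument is correct and is the standard way to obtain this kind of improved estimate: first the global Calder\'on--Zygmund/interior estimate with the lower-order $\lVert u\rVert_{\calL^q}$ term, then a contradiction--compactness argument using $\operatorname{Ker}(L)=\{0\}$ to absorb it. One minor remark: the compact embedding $W^{2,q}(M,g)\hookrightarrow \calL^q(M,g)$ holds on a closed manifold for every $q\geqslant 1$, so you do not actually need $q>n$ at that step (the hypothesis $q>n$ is used elsewhere in the paper for the $\calC^{1,\alpha}$ embedding, not here).

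As for comparison with the paper: the paper does not supply its own proof of this theorem. It is stated as a background result with a citation to Nirenberg \cite[\S7]{Niren4}, so there is nothing to compare against. Your proof is exactly the argument one would expect to find behind that citation.
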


\begin{theorem}\label{pre:thm2}\cite[Thm.~2.2]{XU5} Let $ (\bar{M}, g) $ be a compact manifold with smooth boundary $ \partial M $. Let $ \nu $ be the unit outward normal vector along $ \partial M $ and $ q > n = \dim \bar{M} $. Let $ L: \calC^{\infty}(\bar{M}) \rightarrow \calC^{\infty}(\bar{M}) $ be a uniform second order elliptic operator on $ M $ with smooth coefficients up to $ \partial M $ and can be extended to $ L : W^{2, q}(M, g) \rightarrow \calL^{q}(M, g) $. Let $ f \in \calL^{q}(M, g), \tilde{f} \in W^{1, q}(M, g) $. Let $ u \in H^{1}(M, g) $ be a weak solution of the following boundary value problem
\begin{equation}\label{pre:eqn3}
L u = f \; {\rm in} \; M, Bu = \frac{\partial u}{\partial \nu} + c(x) u = \tilde{f} \; {\rm on} \; \partial M.
\end{equation}
Here $ c \in \calC^{\infty}(M) $. Assume also that $ \text{Ker}(L) = \lbrace 0 \rbrace $ associated with the homogeneous Robin boundary condition. If, in addition, $ u \in \calL^{q}(M, g) $, then $ u \in W^{2, q}(M, g) $ with the following estimates
\begin{equation}\label{pre:eqn4}
\lVert u \rVert_{W^{2, q}(M, g)} \leqslant \gamma' \left(\lVert Lu \rVert_{\calL^{q}(M, g)} + \lVert Bu \rVert_{W^{1, q}(M, g)} \right)
\end{equation}
Here $ \gamma' $ depends on $ L, q, c $ and the manifold $ (\bar{M}, g) $ and is independent of $ u $.
\end{theorem}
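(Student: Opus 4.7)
The plan is to combine standard $\calL^q$-regularity for the oblique (Robin-type) boundary value problem with a compactness-contradiction argument that exploits $\mathrm{Ker}(L) = \{0\}$ to absorb the $\lVert u\rVert_{\calL^q}$ term from the classical a priori bound. For the regularity step, since $u\in H^1(M,g)$ weakly solves $Lu=f$ in $M$ with $Bu = \tilde f$ on $\partial M$, with $f\in\calL^q(M,g)$, $\tilde f\in W^{1,q}(M,g)$ and (by hypothesis) $u\in\calL^q(M,g)$, I would invoke standard interior $\calL^q$-regularity together with the boundary $\calL^q$-regularity for oblique derivative problems. Using a finite atlas of boundary charts that straighten $\partial M$ and a subordinate smooth partition of unity, the Agmon-Douglis-Nirenberg theory on the half-space, transplanted via the smoothness of $g$ and of the coefficients of $L$ and $c$ up to $\partial M$, yields $u\in W^{2,q}(M,g)$ together with the standard a priori estimate
\begin{equation*}
\lVert u\rVert_{W^{2,q}(M,g)} \leqslant C\bigl(\lVert Lu\rVert_{\calL^q(M,g)} + \lVert Bu\rVert_{W^{1,q}(M,g)} + \lVert u\rVert_{\calL^q(M,g)}\bigr),
\end{equation*}
with $C$ depending on $L,q,c$ and on $(\bar M,g)$.

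To upgrade this to (\ref{pre:eqn4}), I argue by contradiction: if no such $\gamma'$ exists, there is a sequence $\{u_k\}\subset W^{2,q}(M,g)$ with $\lVert u_k\rVert_{W^{2,q}(M,g)} = 1$ yet $\lVert Lu_k\rVert_{\calL^q(M,g)} + \lVert Bu_k\rVert_{W^{1,q}(M,g)}\to 0$. By Rellich-Kondrachov on the compact manifold $\bar M$, the embedding $W^{2,q}(M,g)\hookrightarrow\calL^q(M,g)$ is compact, so along a subsequence $u_k\rightharpoonup u_\infty$ weakly in $W^{2,q}$ and $u_k\to u_\infty$ strongly in $\calL^q$. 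Weak continuity of $L:W^{2,q}\to\calL^q$ forces $Lu_\infty=0$, and continuity of the trace map into $W^{1-1/q,q}(\partial M)$ together with $\lVert Bu_k\rVert_{W^{1,q}(M,g)}\to 0$ yields $Bu_\infty=0$ on $\partial M$. The hypothesis $\mathrm{Ker}(L)=\{0\}$ under the homogeneous Robin condition then forces $u_\infty\equiv 0$, so $u_k\to 0$ in $\calL^q(M,g)$. Substituting into the standard estimate above gives
\begin{equation*}
1 = \lVert u_k\rVert_{W^{2,q}(M,g)} \leqslant C\bigl(\lVert Lu_k\rVert_{\calL^q(M,g)} + \lVert Bu_k\rVert_{W^{1,q}(M,g)} + \lVert u_k\rVert_{\calL^q(M,g)}\bigr)\to 0,
\end{equation*}
a contradiction, whence (\ref{pre:eqn4}) holds.

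The main technical difficulty lies in carefully handling the Robin boundary operator when passing to the limit: one must check that the trace limit $u_\infty$ satisfies $Bu_\infty=0$ in the same functional sense as the kernel hypothesis, which depends on continuity of the trace map $W^{2,q}(M,g)\to W^{2-1/q,q}(\partial M)$ and on the smoothness of $c$. A secondary care point is that the classical Agmon-Douglis-Nirenberg boundary estimates are stated on Euclidean smooth domains, so some bookkeeping is required to patch them on $(\bar M, g)$ via boundary charts, making essential use of the smoothness of the metric and of the coefficients of $L$ and $B$ up to $\partial M$ to control commutators and the lower-order perturbations that arise in transplanting the half-space estimates.
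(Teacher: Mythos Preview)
Your argument is correct and is precisely the standard route: invoke Agmon--Douglis--Nirenberg $\calL^{q}$-estimates for the oblique boundary problem (patched over a finite atlas) to get the a priori bound with the extra $\lVert u\rVert_{\calL^{q}}$ term, then remove that term by a normalization/compactness contradiction using $\mathrm{Ker}(L)=\{0\}$ under the homogeneous Robin condition. Note, however, that this paper does not actually prove the statement: it is quoted as \cite[Thm.~2.2]{XU5} and used as a black box, so there is no in-paper proof to compare your proposal against; your proof is consistent with how such results are established in the cited reference.
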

\begin{remark}\label{pre:re1}
According to the results of Theorem \ref{pre:thm1} and Theorem \ref{pre:thm2}, we have
\begin{equation}\label{pre:eqn5}
\begin{split}
& \frac{1}{q} - \frac{1}{n} \leqslant -\frac{\alpha}{n} \Rightarrow \lVert u \rVert_{\calC^{1, \alpha}(M)} \leqslant K \lVert u \rVert_{W^{2, q}(M, g)}; \\
& \frac{1}{q} - \frac{1}{n} \leqslant -\frac{\alpha}{n} \Rightarrow \lVert u \rVert_{\calC^{1, \alpha}(\bar{M})} \leqslant K' \lVert u \rVert_{W^{2, q}(M, g)},
\end{split}
\end{equation}
due to the H\"older-type Schauder estimates. Here the constant $ K, K' $ depend only on $ q, n, \alpha $, the manifolds $ (M, g) $ or $ (\bar{M}, g) $ and is independent of $ u $. The estimates (\ref{pre:eqn5}) give control of the $ \calL^{\infty} $-norms of $ u $ and $ \nabla u $, respectively.
\end{remark}
\medskip

Our local to global analysis in solving Yamabe-type problems construct lower and upper solutions of the Yamabe-type equation, we then apply the monotone iteration schemes to obtain the solution of theses Yamabe-type equations. In particular, we need the following two versions of the monotone iteration schemes, one for closed manifolds, the other for compact manifolds with non-empty boundary. 

For closed manifolds, we have
\begin{theorem}\label{pre:thm3}\cite[Lemma~2.6]{KW}\cite[Thm.~2.5]{XU3} Let $ (M, g) $ be a closed manifold with $ \dim M \geqslant 3 $. Let $ h, H \in \calC^{\infty}(M) $ for some $ p > n = \dim M $. Let $ m > 1 $ be a constant. If there exists functions $ u_{-}, u_{+} \in \calC_{0}(M) \cap H^{1}(M, g) $ such that
\begin{equation}\label{pre:eqn6}
\begin{split}
-a\Delta_{g} u_{-} + hu_{-} & \leqslant Hu_{-}^{m} \; {\rm in} \; (M, g); \\
-a\Delta_{g} u_{+} + hu_{+} & \geqslant Hu_{+}^{m} \; {\rm in} \; (M, g),
\end{split}
\end{equation}
hold weakly, with $ 0 \leqslant u_{-} \leqslant u_{+} $ and $ u_{-} \not\equiv 0 $, then there is a $ u \in W^{2, p}(M, g) $ satisfying
\begin{equation}\label{pre:eqn7}
-a\Delta_{g} u + hu = Hu^{m} \; {\rm in} \; (M, g).
\end{equation}
In particular, $ u \in \calC^{\infty}(M) $.
\end{theorem}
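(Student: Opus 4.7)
The plan is to run a monotone iteration with a shifted, coercive linear operator, starting from the upper solution and descending. First, set
\begin{equation*}
K := 1 + \lVert h \rVert_{\calL^{\infty}(M)} + m \lVert H \rVert_{\calL^{\infty}(M)} \lVert u_{+} \rVert_{\calL^{\infty}(M)}^{m-1},
\end{equation*}
which is finite since $u_{+} \in \calC^{0}(M)$ is automatically bounded on the closed manifold $M$. Two properties of $K$ matter: (a) the shifted operator $L_{K} := -a\Delta_{g} + K$ is coercive on $H^{1}(M,g)$ and hence, by Theorem \ref{pre:thm1}, gives an isomorphism $W^{2,p}(M,g) \to \calL^{p}(M,g)$ for every $p \in (n,\infty)$; and (b) the nonlinearity $\phi(x,t) := (K - h(x))\, t + H(x)\, t^{m}$ is nondecreasing in $t$ on $[0, \lVert u_{+}\rVert_{\calL^{\infty}}]$, since $\partial_{t}\phi = K - h + mHt^{m-1} \geqslant 0$ there. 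The target equation (\ref{pre:eqn7}) rewrites as $L_{K} u = \phi(x, u)$.

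Define the iteration $v_{0} := u_{+}$ and, for $k \geqslant 0$, let $v_{k+1}$ be the unique solution in $W^{2,p}(M,g)$ of
\begin{equation*}
L_{K} v_{k+1} = \phi(\cdot, v_{k}),
\end{equation*}
which is well-defined as long as $v_{k} \in \calL^{\infty}$. The main claim, proved by induction on $k$, is the chain $u_{-} \leqslant v_{k+1} \leqslant v_{k} \leqslant u_{+}$ almost everywhere. For the first upper bound, the weak supersolution inequality rewrites as $L_{K} u_{+} \geqslant \phi(\cdot, u_{+})$, so $L_{K}(u_{+} - v_{1}) \geqslant 0$ weakly; testing against $(u_{+} - v_{1})^{-} \in H^{1}(M,g)$ and using $K > 0$ (weak maximum principle for the coercive $L_{K}$) yields $v_{1} \leqslant u_{+}$. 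For the lower bound, $L_{K}(v_{1} - u_{-}) = \phi(\cdot, u_{+}) - L_{K} u_{-} \geqslant \phi(\cdot, u_{-}) - L_{K} u_{-} \geqslant 0$ weakly, using monotonicity of $\phi$ together with $u_{+} \geqslant u_{-}$ and the subsolution inequality for $u_{-}$. The inductive step is identical: once $v_{k} \leqslant v_{k-1}$, monotonicity of $\phi$ gives $L_{K}(v_{k} - v_{k+1}) = \phi(\cdot, v_{k-1}) - \phi(\cdot, v_{k}) \geqslant 0$, and the same comparison with $u_{-}$ propagates.

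The sequence $(v_{k})$ is monotone decreasing and uniformly bounded in $\calL^{\infty}$, so $\phi(\cdot, v_{k})$ is uniformly bounded in $\calL^{p}$ for every $p < \infty$; the estimate (\ref{pre:eqn2}) applied to $L_{K} v_{k+1} = \phi(\cdot, v_{k})$ gives a uniform $W^{2,p}(M,g)$ bound, and Remark \ref{pre:re1} turns this into a uniform $\calC^{1,\alpha}(M)$ bound. Pointwise monotone convergence plus Arzel\`a--Ascoli forces the full sequence $v_{k}$ to converge in $\calC^{1}(M)$ to a limit $u$ with $u_{-} \leqslant u \leqslant u_{+}$; dominated convergence on both sides of $L_{K} v_{k+1} = \phi(\cdot, v_{k})$ produces $L_{K} u = \phi(\cdot, u)$, which is (\ref{pre:eqn7}). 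Nontriviality $u \not\equiv 0$ is immediate from $u \geqslant u_{-} \not\equiv 0$, and a standard Schauder bootstrap from $W^{2,p} \hookrightarrow \calC^{1,\alpha}$ up through smooth coefficients $h, H \in \calC^{\infty}(M)$ upgrades $u$ to $\calC^{\infty}(M)$.

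The main technical delicacy is that $u_{\pm}$ are only assumed to lie in $\calC^{0} \cap H^{1}$, so the sub/super-solution inequalities (\ref{pre:eqn6}) are weak and the comparison at each step must be carried out by the weak maximum principle for $L_{K}$ (testing against the negative part of the relevant difference in $H^{1}(M,g)$ and using $K > 0$ to force coercivity). Once the first iterate $v_{1}$ is produced in $W^{2,p}$ and upgraded by Sobolev embedding into $\calC^{1,\alpha}$, every later iterate is classical and the remainder of the argument is routine; the only genuine hypothesis-dependent subtlety is ensuring the starting upper solution $u_{+}$ is bounded, which here is automatic from compactness of $M$ and continuity of $u_{+}$.
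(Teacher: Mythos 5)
Your proof is correct and follows essentially the same shifted-operator monotone iteration that the paper uses for the boundary analogue (Theorem \ref{pre:thm4}); the paper itself only cites this closed-manifold version rather than proving it. The one step you gloss over is that the final bootstrap to $\calC^{\infty}(M)$ for non-integer $m$ requires $u>0$ so that $u^{m}$ is smooth, but this follows from the strong maximum principle since your choice of $K$ gives $L_{K}u=\phi(\cdot,u)\geqslant 0$ and $u\geqslant u_{-}\not\equiv 0$.
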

For compact manifolds with non-empty boundary, we now introduce a variation of the monotone iteration scheme we used in \cite{XU4}, \cite{XU5} and \cite{XU6}. In particular, we do require $ h_{g} = h \geqslant 0 $ to be some positive constant on $ \partial M $ here, this can be done due to the proof of the Han-Li conjecture in \cite{XU5}. We point out that the proof of the result below is similar to Theorem 4.1 in \cite{XU5}, but technically more subtle.
\begin{theorem}\label{pre:thm4}
Let $ (\bar{M}, g) $ be a compact manifold with smooth boundary $ \partial M $. Let $ \nu $ be the unit outward normal vector along $ \partial M $ and $ q > \dim \bar{M} $. Let $ S \in \calC^{\infty}(\bar{M}) $ and $ H \in \calC^{\infty}(\bar{M}) $ be given functions. Let the mean curvature $ h_{g} = h \geqslant 0 $ be some positive constant. In addition, we assume that $ \sup_{\bar{M}} \lvert H \rvert $ is small enough. Suppose that there exist $ u_{-} \in \calC_{0}(\bar{M}) \cap H^{1}(M, g) $ and $ u_{+} \in W^{2, q}(M, g) \cap \calC_{0}(\bar{M}) $, $ 0 \leqslant u_{-} \leqslant u_{+} $, $ u_{-} \not\equiv 0 $ on $ \bar{M} $, some constants $ \theta_{1} \leqslant 0, \theta_{2} \geqslant 0 $ such that
\begin{equation}\label{pre:eqn8}
\begin{split}
-a\Delta_{g} u_{-} + R_{g} u_{-} - S u_{-}^{p-1} & \leqslant 0 \; {\rm in} \; M, \frac{\partial u_{-}}{\partial \nu} + \frac{2}{p-2} h_{g} u_{-} \leqslant \theta_{1} u_{-} \leqslant \frac{2}{p-2} H u_{-}^{\frac{p}{2}} \; {\rm on} \; \partial M \\
-a\Delta_{g} u_{+} + R_{g} u_{+} - S u_{+}^{p-1} & \geqslant 0 \; {\rm in} \; M, \frac{\partial u_{+}}{\partial \nu} + \frac{2}{p-2} h_{g} u_{+} \geqslant \theta_{2} u_{+} \geqslant \frac{2}{p-2} H u_{+}^{\frac{p}{2}} \; {\rm on} \; \partial M
\end{split}
\end{equation}
holds weakly. In particular, $ \theta_{1} $ can be zero if $ H \geqslant 0 $ on $ \partial M $, and $ \theta_{1} $ must be negative if $ H < 0 $ somewhere on $ \partial M $; similarly, $ \theta_{2} $ can be zero if $ H \leqslant 0 $ on $ \partial M $, and $ \theta_{2} $ must be positive if $ H > 0 $ somewhere on $ \partial M $. Then there exists a real, positive solution $ u \in \calC^{\infty}(M) \cap \calC^{1, \alpha}(\bar{M}) $ of
\begin{equation}\label{pre:eqn9}
\Box_{g} u = -a\Delta_{g} u + R_{g} u = S u^{p-1}  \; {\rm in} \; M, B_{g} u =  \frac{\partial u}{\partial \nu} + \frac{2}{p-2} h_{g} u = \frac{2}{p-2} H u^{\frac{p}{2}} \; {\rm on} \; \partial M.
\end{equation}
\end{theorem}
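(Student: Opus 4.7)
The plan is to use a monotone iteration scheme modeled on the proof of Theorem~4.1 in \cite{XU5}, but now treating the nonlinear boundary term $\frac{2}{p-2} H u^{p/2}$ on the same footing as the interior nonlinearity $S u^{p-1}$. First I would choose two large positive constants $K$ and $\Lambda$, depending on $\lVert u_+ \rVert_{\calL^{\infty}(\bar{M})}$, $\lVert S \rVert_{\calL^{\infty}(\bar{M})}$ and $\lVert H \rVert_{\calL^{\infty}(\partial M)}$, so that the maps
\begin{align*}
F(x, t) &= S(x) t^{p-1} + K t, \\
G(x, t) &= \tfrac{2}{p-2} H(x) t^{p/2} + \Lambda t
\end{align*}
are both non-decreasing in $t \in [0, \lVert u_+ \rVert_{\calL^{\infty}(\bar{M})}]$, and so that $R_g + K > 0$ on $\bar{M}$. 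The smallness assumption on $\sup_{\bar{M}} \lvert H \rvert$ is used precisely here: it lets me take $\Lambda$ large enough to monotonize $G$ while still arranging $\frac{2}{p-2} h + \Lambda - \theta_2 > 0$ and $\frac{2}{p-2} h + \Lambda - \theta_1 > 0$, so that the $\theta_1$-adjusted subsolution inequality for $u_-$ and the $\theta_2$-adjusted supersolution inequality for $u_+$ translate, after adding $\Lambda u$ to both sides, into clean linearized Robin boundary inequalities.

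Next I would build the iteration. Set $u_0 := u_+$, and given $u_k \in W^{2, q}(M, g)$ with $u_- \leqslant u_k \leqslant u_+$, define $u_{k+1}$ to be the unique solution of the linear boundary value problem
\begin{equation*}
(-a\Delta_g + R_g + K) u_{k+1} = F(\cdot, u_k) \; {\rm in} \; M, \quad \frac{\partial u_{k+1}}{\partial \nu} + \left( \tfrac{2}{p-2} h + \Lambda \right) u_{k+1} = G(\cdot, u_k) \; {\rm on} \; \partial M.
\end{equation*}
Existence and uniqueness follow from Theorem \ref{pre:thm2}: the linearized operator has trivial kernel because multiplying a homogeneous solution $v$ by itself and integrating by parts gives $a \lVert \nabla v \rVert_{\calL^2}^2 + \int_M (R_g + K) v^2 \dvol + \int_{\partial M} (\tfrac{2}{p-2} h + \Lambda) v^2 \, dS_g = 0$, forcing $v \equiv 0$ by our choice of $K, \Lambda$. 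Bootstrapping regularity of the right-hand sides from $u_k$ using the Sobolev embedding $W^{2, q} \hookrightarrow \calC^{1, \alpha}$ ensures the data sit in $\calL^q(M, g) \cap W^{1, q}(M, g)$, so that Theorem \ref{pre:thm2} applies.

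I would then establish by induction the sandwich $u_- \leqslant u_{k+1} \leqslant u_k \leqslant u_+$. The base step $u_1 \leqslant u_+$ combines the weak supersolution inequality for $u_+$ with its $\theta_2$-adjusted boundary inequality: applied to $w = u_1 - u_+$, one gets $(-a\Delta_g + R_g + K) w \leqslant 0$ in $M$ and $\frac{\partial w}{\partial \nu} + (\tfrac{2}{p-2} h + \Lambda) w \leqslant 0$ on $\partial M$, and the maximum principle together with the Hopf boundary point lemma (valid since the Robin coefficient is strictly positive) forces $w \leqslant 0$. The inductive step uses only the monotonicity of $F, G$ on $[0, \lVert u_+ \rVert_{\calL^{\infty}(\bar{M})}]$. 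The lower bound $u_{k+1} \geqslant u_-$ is symmetric, using the $\theta_1$-adjusted subsolution inequality. Since $\{u_k\}$ is monotone decreasing and bounded below by $u_-$, it converges pointwise to some limit $u$ with $u_- \leqslant u \leqslant u_+$.

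The main obstacle I anticipate is the coupled choice of $K$, $\Lambda$ and the permissible size of $\sup_{\bar{M}} \lvert H \rvert$ so that four conditions hold at once: $F$ and $G$ are monotone on the full range $[0, \lVert u_+ \rVert_{\calL^{\infty}(\bar{M})}]$; the linearized Robin operator is coercive and invertible via Theorem \ref{pre:thm2}; the $\theta_1, \theta_2$-adjusted sub/supersolution inequalities upgrade to the linearized inequalities after adding $K u$ and $\Lambda u$; and the positive-constant hypothesis $h \geqslant 0$ is preserved throughout. This is the technical subtlety distinguishing the present statement from Theorem~4.1 of \cite{XU5}, where $H$ was taken to be a non-negative constant. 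Once the bookkeeping is in place, passing to the limit is standard: the uniform $\calL^{\infty}$ bound $u_- \leqslant u_k \leqslant u_+$ together with the estimate (\ref{pre:eqn4}) gives a uniform $W^{2, q}(M, g)$ bound, hence a uniform $\calC^{1, \alpha}(\bar{M})$ bound via (\ref{pre:eqn5}); Arzel\`a-Ascoli and linear elliptic regularity applied to the equation for $u$ yield $u \in W^{2, q}(M, g) \cap \calC^{1, \alpha}(\bar{M})$, with interior smoothness $\calC^{\infty}(M)$ by standard bootstrapping. The strong maximum principle applied to $(-a\Delta_g + R_g + K) u = F(\cdot, u)$, together with $u \geqslant u_- \not\equiv 0$, forces $u > 0$ on $M$, completing the proof.
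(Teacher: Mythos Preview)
Your iteration scheme and monotonicity argument are essentially the same as the paper's, but there is a genuine gap in the passage to the limit, and it is tied to a misidentification of where the smallness of $\sup_{\bar{M}}\lvert H\rvert$ actually enters.

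You write that the smallness hypothesis ``is used precisely here: it lets me take $\Lambda$ large enough to monotonize $G$ while still arranging $\tfrac{2}{p-2}h+\Lambda-\theta_2>0$ and $\tfrac{2}{p-2}h+\Lambda-\theta_1>0$''. But neither of these requires $\lvert H\rvert$ small: $G$ is monotonized by taking $\Lambda$ large regardless of $\lvert H\rvert$, and since $h\geqslant 0$, $\theta_1\leqslant 0$, the sign conditions on the Robin coefficient also hold for any large $\Lambda$. So the smallness hypothesis is doing no work where you have placed it.

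The real issue is your sentence ``the uniform $\calL^{\infty}$ bound $u_-\leqslant u_k\leqslant u_+$ together with the estimate (\ref{pre:eqn4}) gives a uniform $W^{2,q}(M,g)$ bound''. This is where the argument breaks down without further input. Estimate (\ref{pre:eqn4}) reads
\[
\lVert u_{k+1}\rVert_{W^{2,q}(M,g)}\leqslant \gamma'\bigl(\lVert F(\cdot,u_k)\rVert_{\calL^q(M,g)}+\lVert G(\cdot,u_k)\rVert_{W^{1,q}(M,g)}\bigr).
\]
The interior term is controlled by $\lVert u_k\rVert_{\calL^\infty}$, but the boundary term $\lVert G(\cdot,u_k)\rVert_{W^{1,q}}$ contains $\lVert \nabla(u_k^{p/2})\rVert_{\calL^q}\sim \lVert u_k^{(p-2)/2}\nabla u_k\rVert_{\calL^q}$, which requires a bound on $\lVert\nabla u_k\rVert_{\calL^q}$, i.e.\ on $\lVert u_k\rVert_{W^{2,q}}$ itself. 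The estimate is therefore of the schematic form $\lVert u_{k+1}\rVert_{W^{2,q}}\leqslant C_0 + C_1(\lvert H\rvert,\Lambda)\,\lVert u_k\rVert_{W^{2,q}}$, and to close the induction you must arrange $C_1<1$. The paper does exactly this: it observes that smaller $\lvert H\rvert$ permits a smaller choice of the boundary shift (their $B$, your $\Lambda$), and then imposes the explicit smallness condition (\ref{pre:eqn17}) on $H$ so that the combined boundary contribution is bounded by $1$ uniformly in $k$, yielding the inductive bound (\ref{pre:eqn18}). This is the technical heart of the proof and the sole reason the hypothesis on $\lvert H\rvert$ appears in the statement; your proposal skips it entirely.
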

\begin{proof} From now on, we replace $ h_{g} $ by the constant $ h $ due to the hypothesis. Fix some $ q > \dim \bar{M} $. Denote $ u_{0} = u_{+} $. Due to compactness of $ \bar{M} $, we can choose a constant $ A > 0 $ such that
\begin{equation}\label{pre:eqn10}
-R_{g}(x) + S(x) (p - 1) u(x)^{p-2} + A > 0, \forall u(x) \in [\min_{\bar{M}} u_{-}(x), \max_{\bar{M}} u_{+}(x) ], \forall x \in \bar{M}
\end{equation}
pointwise. 
Similarly, we can also choose a constant $ B \geqslant 0 $ such that
\begin{equation}\label{pre:eqn11}
-\frac{2}{p-2} h + \frac{p}{p-2} H(y) u(y)^{\frac{p-2}{2}} + B > 0, \forall u(y) \in [\min_{\bar{M}} u_{-}(y), \max_{\bar{M}} u_{+}(y) ], \forall y \in \partial M.
\end{equation}
For the first step, consider the linear PDE
\begin{equation}\label{pre:eqn12}
-a\Delta_{g} u_{1} + Au_{1} = Au_{0} - R_{g} u_{0} + S u_{0}^{p-1}  \; {\rm in} \; M, \frac{\partial u_{1}}{\partial \nu} + Bu_{1} = Bu_{0} - \frac{2}{p-2} h u_{0} + \frac{2}{p-2} H u_{0}^{\frac{p}{2}} \; {\rm on} \; \partial M.
\end{equation}
Since $ u_{0} = u_{+} \in W^{2, q}(M, g) \cap \calC_{0}(\bar{M}) $, there exists a unique solution $ u_{1} \in H^{1}(M, g) $, due to the fact that $ A > 0 $ and $ B \geqslant 0 $ and thus the standard Lax-Milgram theorem applies. Since $ u_{0} \in W^{2, q}(M, g) \cap \calC_{0}(\bar{M}) $, and thus $ u_{0} \in \calC^{1, \alpha}(\bar{M}) \cap \calL^{q}(M, g) $ for all $ 1 < q < \infty $, it follows from $ \calL^{p}$-regularity in Theorem 2.1 of \cite{XU5} that $ u_{1} \in W^{2, q}(M, g) $. By standard $ (s, p) $-type Sobolev embedding, it follows that $ u_{1} \in \calC^{1, \alpha}(\bar{M}) $ for some $ \alpha \in (0, 1) $. 

Next we show that $ u_{1} \leqslant u_{0} = u_{+} $.
Subtracting the second equation in (\ref{pre:eqn8}) by (\ref{pre:eqn12}), we have
\begin{equation*}
\left( -a\Delta_{g} + A \right) (u_{0} - u_{1}) \geqslant 0 \; {\rm in} \; M, \left( \frac{\partial}{\partial \nu} + B \right) (u_{0} - u_{1}) \geqslant 0 \; {\rm on} \; \partial M.
\end{equation*}
in the weak sense, due to the choices of $ A $ and $ B $ in (\ref{pre:eqn10}) and (\ref{pre:eqn11}). Denote
\begin{equation*}
w = \max \lbrace 0, u_{1} - u_{0} \rbrace.
\end{equation*}
It is immediate that $ w \in H^{1}(M, g) \cap \calC_{0}(\bar{M}) $ and $ w \geqslant 0 $. It follows that
\begin{align*}
0 & \geqslant \int_{M} \left( a \nabla_{g} (u_{1} - u_{0}) \cdot \nabla_{g} w + A(u_{1} - u_{0}) w \right) d\omega + \int_{\partial M} B (u_{1} - u_{0}) w dS \\
& = \int_{M} \left( a \lvert \nabla_{g} w \rvert^{2} + A w^{2} \right) d\omega + \int_{\partial M} B w^{2} dS \geqslant 0.
\end{align*}
The last inequality holds since $ A > 0 $ and $ B \geqslant 0 $. It follows that
\begin{equation*}
w \equiv 0 \Rightarrow 0 \geqslant u_{1} - u_{0} \Rightarrow u_{0} \geqslant u_{1}.
\end{equation*}
By the same argument, we can show that $ u_{1} \geqslant u_{-} $ and hence $ u_{-} \leqslant u_{1} \leqslant u_{+} $. Assume inductively that $ u_{-} \leqslant \dotso \leqslant u_{k-1} \leqslant u_{k} \leqslant u_{+} $ for some $ k > 1 $ with $ u_{k} \in W^{2, q}(M, g) $ , the $ (k + 1)th $ iteration step is
\begin{equation}\label{pre:eqn13}
\begin{split}
-a\Delta_{g} u_{k+1} + Au_{k+1} & = Au_{k} - R_{g} u_{k} + S u_{k}^{p-1}  \; {\rm in} \; M, \\
 \frac{\partial u_{k+1}}{\partial \nu} +Bu_{k + 1} & = Bu_{k} - \frac{2}{p-2} h u_{k} + \frac{2}{p-2} H u_{k}^{\frac{p}{2}} \; {\rm on} \; \partial M.
\end{split}
\end{equation}
Since $ u_{k} \in W^{2, q}(M, g) $ thus $ u_{k} \in \calC^{1, \alpha}(\bar{M}) $ due to Sobolev embedding, by the same reason as the first step, we conclude that there exists $ u_{k+1} \in W^{2, q}(M, g) $ that solves (\ref{pre:eqn13}). In particular, $ u_{k}^{\frac{p}{2}} = u_{k}^{\frac{n}{n-2}} \in \calC^{1}(\bar{M}) $ hence the hypothesis of the boundary condition in Theorem 2.1 and Theorem 2.4 of \cite{XU5} are satisfied. 

We show that $ u_{-} \leqslant u_{k + 1} \leqslant u_{k} \leqslant u_{+} $. The $ kth $ iteration step is of the form
\begin{equation}\label{pre:eqn14}
\begin{split}
-a\Delta_{g} u_{k} + Au_{k} & = Au_{k - 1} - R_{g} u_{k - 1} + S u_{k - 1}^{p-1}  \; {\rm in} \; M, \\
\frac{\partial u_{k}}{\partial \nu} + Bu_{k} & = Bu_{k - 1} -  \frac{2}{p-2} h u_{k - 1} + \frac{2}{p-2} H u_{k - 1}^{\frac{p}{2}} \; {\rm on} \; \partial M.
\end{split}
\end{equation}
Subtracting (\ref{pre:eqn13}) by (\ref{pre:eqn14}), we conclude that
\begin{align*}
& \left( -a\Delta_{g} + A \right) \left( u_{k + 1} - u_{k} \right) = A(u_{k} - u_{k - 1}) - R_{g} (u_{k} - u_{k - 1}) + \lambda \left( u_{k}^{p-1} - u_{k - 1}^{p-1} \right) \leqslant 0 \; {\rm in} \; M; \\
& \frac{\partial \left(u_{k+1} - u_{k}\right)}{\partial \nu} + B(u_{k + 1} - u_{k}) = B(u_{k} - u_{k-1}) \\
& \qquad - \frac{2}{p-2} h (u_{k+ 1} - u_{k}) + \frac{2}{p-2} H u_{k}^{\frac{p}{2}}  - \frac{2}{p-2} H u_{k - 1}^{\frac{p}{2}} \leqslant 0 \; {\rm on} \; \partial M.
\end{align*}
By induction we have $ u_{-} \leqslant u_{k} \leqslant u_{k - 1} \leqslant u_{+} $. The first inequality above is then due to the pointwise mean value theorem and the choice of $ A $ in (\ref{pre:eqn10}). Similarly, the second inequality is due to the choice of $ B $ in (\ref{pre:eqn11}). Note that since both $ u_{k}, u_{k-1} \in W^{2, q}(M, g) $, above inequalities hold in strong sense. We choose
\begin{equation*}
\tilde{w} = \max \lbrace 0, u_{k+1} - u_{k} \rbrace.
\end{equation*}
Clearly $ \tilde{w} \geqslant 0 $ with $ \tilde{w} \in H^{1}(M, g) \cap \calC_{0}(\bar{M}) $. Pairing $ \tilde{w} $ with $ \left( -a\Delta_{g} + A \right) \left( u_{k} - u_{k + 1} \right) \leqslant 0 $, we have
\begin{align*}
0 & \geqslant \int_{M} \left( -a\Delta_{g} + A \right) \left( u_{k+1} - u_{k} \right)  \tilde{w} d\omega = \int_{M} a \nabla_{g} (u_{k+1} - u_{k}) \cdot \nabla_{g} \tilde{w} d\omega - \int_{\partial M} \frac{\partial \left(u_{k+1} - u_{k}\right)}{\partial \nu} \tilde{w} dS \\
& \geqslant  \int_{M} a \nabla_{g} (u_{k+1} - u_{k}) \cdot \nabla_{g} \tilde{w} d\omega + \int_{\partial M} B (u_{k+1} - u_{k}) \tilde{w} dS \\
& = a \lVert \nabla_{g} \tilde{w} \rVert_{\calL^{2}(M, g)}^{2} + B \int_{\partial M} \tilde{w}^{2} dS \geqslant 0.
\end{align*}
It follows that
\begin{equation*}
\tilde{w} = 0 \Rightarrow 0 \geqslant u_{k + 1} - u_{k} \Rightarrow u_{k + 1} \leqslant u_{k}.
\end{equation*}
By the same argument and the induction $ u_{k} \geqslant u_{-} $, we conclude that $ u_{k+1} \geqslant u_{-} $. Thus
\begin{equation}\label{pre:eqn15}
0 \leqslant u_{-} \leqslant u_{k+1} \leqslant u_{k} \leqslant u_{+}, u_{k} \in W^{2, q}(M, g), \forall k \in \mathbb{N}.
\end{equation}
To apply the Arzela-Ascoli Theorem, we need to show that $ \lVert u_{k} \rVert_{W^{2,q}(M, g)} $ is uniformly bounded above in $ k $. By Theorem 2.4 of \cite{XU5}, the operator $ -a\Delta_{g}u + Au $ with the homogeneous Robin condition $ \frac{\partial u}{\partial \nu} + B u = 0 $ for $ B \geqslant 0 $ is injective. Applying $ L^{p} $-regularity in Theorem \ref{pre:thm2}, we conclude from the first iteration step (\ref{pre:eqn12}) that
\begin{equation}\label{pre:eqn16}
\lVert u_{1} \rVert_{W^{2, q}(M, g)} \leqslant C' \left( \lVert Au_{0} - R_{g} u_{0} + S u_{0}^{p-1} \rVert_{\calL^{q}(M, g)} + \left\lVert \left( B - \frac{2}{p-2} h \right) u_{0} + \frac{2}{p-2} H u_{0}^{\frac{p}{2}} \right\rVert_{W^{1, q}(M, g)} \right).
\end{equation}
We point out that the constant $ C' $ only depends on the metric $ g $, the differential operators in the interior and on the boundary; in particular, the constant $ C' $ is kept the same if we make $ B $ smaller. Briefly speaking, the $ Bu $ term will be reflected in $ \lVert Bu \rVert_{W^{1, p}(M, g)} $ on the right side; hence the Peter-Paul inequality will allow us to subtract less in terms of $ \lVert u \rVert_{W^{2,p}(M, g)} $ on the left side of the elliptic regularity. Consider the formula (\ref{pre:eqn11}) in which we make the choice of the constant $ B $. Since $ u_{-} $ and $ u_{+} $ are fixed, it follows that the smaller the $ \sup_{\partial M} \lvert H \rvert $, the smaller the constant $ B $ we can choose so that $ B - \frac{2}{p-2} h \rightarrow 0 $.

Choose $ \sup_{\partial M} \lvert H \rvert > 0 $ small enough so that
\begin{equation}\label{pre:eqn17}
\begin{split}
& \left\lVert \left( B - \frac{2}{p-2} h \right) u_{0} + \frac{2}{p-2} H u_{0}^{\frac{p}{2}} \right\rVert_{W^{1, q}(M, g)}\leqslant 1; \\
& \frac{2}{p-2}   \sup_{\bar{M}} \left( u_{0}^{\frac{p}{2}} \right) \cdot Vol_{g}(\bar{M}) \left( \sup_{\bar{M}} \lvert H \rvert +  \sup_{\partial M} \lvert \nabla H \rvert^{p} \right) \\
& \qquad +\left( B - \frac{2}{p-2}h + \frac{p}{p-2} \sup_{\bar{M}} \lvert H \rvert \sup_{\bar{M}} \left( u_{0}^{\frac{p - 2}{2}} \right) \right) \cdot \\
& \qquad \qquad \cdot C' \left( \left( A  + \sup_{\bar{M}} \lvert R_{g} \rvert + \sup_{\bar{M}} \lvert S \rvert \sup_{\bar{M}} \left( u_{0}^{p-2} \right) \right) \sup_{\bar{M}} \left( u_{0} \right) \cdot \text{Vol}_{g}(\bar{M}) + 1 \right) \leqslant 1.
\end{split}
\end{equation}
Note that for smaller $ \sup_{\bar{M}} \lvert H \rvert $, the sub-solution and supers-olution in (\ref{pre:eqn8}) still hold, due to our hypotheses of $ \theta_{1} $ and $ \theta_{2} $. Note also that the choice of $ H $ in terms of (\ref{pre:eqn17}) does not depend on $ k $. Due to (\ref{pre:eqn17}), we conclude that
\begin{align*}
\lVert u_{1} \rVert_{W^{2, q}(M, g)} & \leqslant C' \left( \lVert Au_{0} - R_{g} u_{0} +S u_{0}^{p-1} \rVert_{\calL^{q}(M, g)} + 1 \right) \\
& \leqslant C' \left( \left( A  + \sup_{\bar{M}} \lvert R_{g} \rvert + \sup_{\bar{M}} \lvert S \rvert \sup_{\bar{M}} \left( u_{0}^{p-2} \right) \right) \sup_{\bar{M}} \left( u_{0} \right) \cdot \text{Vol}_{g}(\bar{M}) + 1 \right).
\end{align*}
Inductively, we assume
\begin{equation}\label{pre:eqn18}
\lVert u_{k} \rVert_{W^{2, q}(M, g)} \leqslant C' \left( \left( A  + \sup_{\bar{M}} \lvert R_{g} \rvert + \sup_{\bar{M}} \lvert S \rvert \sup_{\bar{M}} \left( u_{0}^{p-2} \right) \right) \sup_{\bar{M}} \left( u_{0} \right) \cdot \text{Vol}_{g}(\bar{M}) + 1 \right).
\end{equation}
For $ u_{k + 1} $, we conclude from (\ref{pre:eqn13}) that
\begin{equation}\label{pre:eqn19}
\begin{split}
\lVert u_{k+1} \rVert_{W^{2, q}(M, g)} & \leqslant C' \lVert Au_{k} - R_{g} u_{k} + \lambda u_{k}^{p-1} \rVert_{\calL^{q}(M, g)} \\
& \qquad + C' \left\lVert \left( B - \frac{2}{p-2} h \right)u_{0} \right\rVert_{W^{1, q}(M, g)} + C' \left\lVert \frac{2}{p-2} \zeta u_{k}^{\frac{p}{2}} \right\rVert_{W^{1, q}(M, g)}.
\end{split}
\end{equation}
The last term in (\ref{pre:eqn19}) can be estimated as
\begin{align*}
\left\lVert \frac{2}{p-2} H u_{k}^{\frac{p}{2}} \right\rVert_{W^{1, q}(M, g)} & = \frac{2}{p-2} \left( \lVert H u_{k}^{\frac{p}{2}} \rVert_{\calL^{q}(M, g)} + \left\lVert \nabla_{g} \left(H \left( u_{k}^{\frac{p}{2}} \right) \right) \right\rVert_{\calL^{q}(M, g)} \right) \\
& \leqslant  \frac{2}{p-2} \sup_{\bar{M}} \lvert H \rvert \sup_{\bar{M}} \left( u_{k}^{\frac{p}{2}} \right) \cdot Vol_{g}(\bar{M}) \\
& \qquad + \frac{p}{p-2} \sup_{\bar{M}} \lvert H \rvert \sup_{\bar{M}} \left( u_{k}^{\frac{p - 2}{2}} \right) \lVert \nabla_{g} u_{k}  \rVert_{\calL^{q}(M, g)} \\
& \qquad \qquad + \frac{2}{p-2} \sup_{\bar{M}} \left( u_{k}^{\frac{p}{2}} \right) \sup_{\partial M} \lvert \nabla H \rvert^{p} \cdot Vol_{g}(\bar{M}) \\
& \leqslant  \frac{2}{p-2}   \sup_{\bar{M}} \left( u_{0}^{\frac{p}{2}} \right) \cdot Vol_{g}(\bar{M}) \left( \sup_{\bar{M}} \lvert H \rvert +  \sup_{\partial M} \lvert \nabla H \rvert^{p} \right) \\
& \qquad +\frac{p}{p-2}  \sup_{\bar{M}} \lvert H \rvert \cdot \sup_{\bar{M}}  \left( u_{0}^{\frac{p - 2}{2}} \right) \lVert u_{k}  \rVert_{W^{2, q}(M, g)}.
\end{align*}
By the choice of $ H $ in (\ref{pre:eqn17}) and induction assumption in (\ref{pre:eqn18}), we conclude that
\begin{equation*}
\left\lVert \left( B - \frac{2}{p-2} h \right)u_{k} \right\rVert_{W^{1, q}(M, g)} + \left\lVert \frac{2}{p-2} \zeta u_{k}^{\frac{p}{2}} \right\rVert_{W^{1, q}(M, g)} \leqslant 1.
\end{equation*}
It follows from (\ref{pre:eqn19}) that
\begin{equation}\label{pre:eqn20}
\begin{split}
\lVert u_{k+1} \rVert_{W^{2, q}(M, g)} & \leqslant C' \lVert Au_{k} - R_{g} u_{k} + \lambda u_{k}^{p-1} \rVert_{\calL^{q}(M, g)} \\
& \qquad + C' \left\lVert \left( B - \frac{2}{p-2} h \right)u_{0} \right\rVert_{W^{1, q}(M, g)} + C' \left\lVert \frac{2}{p-2} \zeta u_{k}^{\frac{p}{2}} \right\rVert_{W^{1, q}(M, g)} \\
& \leqslant C' \left( \left( A  + \sup_{\bar{M}} \lvert R_{g} \rvert + \lvert \lambda \rvert \sup_{\bar{M}} \left( u_{0}^{p-2} \right) \right) \sup_{\bar{M}} \left( u_{0} \right) \cdot \text{Vol}_{g}(\bar{M}) + 1 \right).
\end{split}
\end{equation}
It follows that the sequence $ \lbrace u_{k} \rbrace_{k \in \mathbb{N}} $ is uniformly bounded in $ W^{2, q} $-norm. By Sobolev embedding, we conclude that the same sequence is uniformly bounded in $ \calC^{1, \alpha} $-norm with some $ \alpha \in (0, 1) $. Thus by Arzela-Ascoli theorem, we conclude that there exists $ u $ such that
\begin{equation*}
u = \lim_{k \rightarrow \infty} u_{k}, 0 \leqslant u_{-} \leqslant u \leqslant u_{+}, \Box_{g} u = \lambda u^{p-1}  \; {\rm in} \; M, B_{g} u = \frac{2}{p-2} \zeta u^{\frac{p}{2}} \; {\rm on} \; \partial M
\end{equation*}
in the strong sense. Apply the elliptic regularity, we conclude that $ u \in W^{2, q}(M, g) $. A standard bootstrapping argument concludes that $ u \in \calC^{\infty}(M) \cap \calC^{1, \alpha}(\bar{M}) $, due to Schauder estimates. The regularity of $ u $ on $ \partial M $ is determined by $ u^{p-1} $. We then apply the trace theorem \cite[Prop.~4.5]{T} to show that $ u $ is also smooth on $ \partial M $.

Lastly we show that $ u $ is positive. Since $ u \in \calC^{\infty}(M) $ it is smooth locally, the local strong maximum principle says that if $ u = 0 $ in some interior domain $ \Omega $ then $ u \equiv 0 $ on $ \Omega $, a continuation argument then shows that $ u \equiv 0 $ in $ M $. But $ u \geqslant u_{-} $ and $ u_{-} > 0 $ within some region. Thus $ u > 0 $ in the interior $ M $. By the same argument in \cite[\S1]{ESC}, we conclude that $ u > 0 $ on $ \bar{M} $.
\end{proof}
We have an immediate consequence of Theorem \ref{pre:thm4} for a perturbed conformal Laplacian operator.
\begin{corollary}\label{pre:cor1}
Let $ (\bar{M}, g) $ be a compact manifold with smooth boundary $ \partial M $. Let $ \nu $ be the unit outward normal vector along $ \partial M $ and $ q > \dim \bar{M} $. Let $ S \in \calC^{\infty}(\bar{M}) $ and $ H \in \calC^{\infty}(\bar{M}) $ be given functions. Let the mean curvature $ h_{g} = h \geqslant 0 $ be some positive constant and $ \beta < 0 $ be some negative constant. In addition, we assume that $ \sup_{\bar{M}} \lvert H \rvert $ is small enough. Suppose that there exist $ u_{-} \in \calC_{0}(\bar{M}) \cap H^{1}(M, g) $ and $ u_{+} \in W^{2, q}(M, g) \cap \calC_{0}(\bar{M}) $, $ 0 \leqslant u_{-} \leqslant u_{+} $, $ u_{-} \not\equiv 0 $ on $ \bar{M} $, some constants $ \theta_{1} \leqslant 0, \theta_{2} \geqslant 0 $ such that
\begin{equation}\label{pre:eqn21}
\begin{split}
-a\Delta_{g} u_{-} + \left(R_{g} + \beta \right) u_{-} - S u_{-}^{p-1} & \leqslant 0 \; {\rm in} \; M, \frac{\partial u_{-}}{\partial \nu} + \frac{2}{p-2} h_{g} u_{-} \leqslant \theta_{1} u_{-} \leqslant \frac{2}{p-2} H u_{-}^{\frac{p}{2}} \; {\rm on} \; \partial M \\
-a\Delta_{g} u_{+} + \left(R_{g} + \beta \right) u_{+} - S u_{+}^{p-1} & \geqslant 0 \; {\rm in} \; M, \frac{\partial u_{+}}{\partial \nu} + \frac{2}{p-2} h_{g} u_{+} \geqslant \theta_{2} u_{+} \geqslant \frac{2}{p-2} H u_{+}^{\frac{p}{2}} \; {\rm on} \; \partial M
\end{split}
\end{equation}
holds weakly. In particular, $ \theta_{1} $ can be zero if $ H \geqslant 0 $ on $ \partial M $, and $ \theta_{1} $ must be negative if $ H < 0 $ somewhere on $ \partial M $; similarly, $ \theta_{2} $ can be zero if $ H \leqslant 0 $ on $ \partial M $, and $ \theta_{2} $ must be positive if $ H > 0 $ somewhere on $ \partial M $. Then there exists a real, positive solution $ u \in \calC^{\infty}(M) \cap \calC^{1, \alpha}(\bar{M}) $ of
\begin{equation}\label{pre:eqn22}
\Box_{g} u = -a\Delta_{g} u +\left( R_{g} + \beta \right) u = S u^{p-1}  \; {\rm in} \; M, B_{g} u =  \frac{\partial u}{\partial \nu} + \frac{2}{p-2} h_{g} u = \frac{2}{p-2} H u^{\frac{p}{2}} \; {\rm on} \; \partial M.
\end{equation}
\end{corollary}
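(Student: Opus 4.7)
The strategy is to rerun the proof of Theorem \ref{pre:thm4} essentially verbatim, with $R_g$ replaced throughout by the smooth function $\tilde R_g := R_g + \beta$. The proof of Theorem \ref{pre:thm4} never invokes any geometric property of $R_g$ beyond its being a fixed bounded smooth function on $\bar M$: the Lax-Milgram step at each iteration, the $\calL^q$-regularity of Theorem \ref{pre:thm2}, the injectivity of the auxiliary operator $-a\Delta_g + A$ with homogeneous Robin condition, and the monotone convergence / bootstrap closing all treat $R_g$ merely as a coefficient. Hence perturbing by a real constant $\beta$ is innocuous at every step.

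More concretely, the plan is as follows. First I would choose $A > 0$ large enough that
\[
-\tilde R_g(x) + S(x)(p-1) u(x)^{p-2} + A > 0
\]
for every $x \in \bar M$ and every value $u(x) \in [\min_{\bar M} u_-, \max_{\bar M} u_+]$. Since $\beta < 0$ contributes $-\beta = |\beta| > 0$ to the left side, this inequality is in fact strictly easier to arrange than (\ref{pre:eqn10}). Next, I would fix the boundary constant $B \geq 0$ exactly as in (\ref{pre:eqn11}), a choice not touched by $\beta$. Setting $u_0 = u_+$ and solving, at each step,
\[
-a\Delta_g u_{k+1} + A u_{k+1} = A u_k - \tilde R_g u_k + S u_k^{p-1} \text{ in } M,
\]
\[
\frac{\partial u_{k+1}}{\partial \nu} + B u_{k+1} = B u_k - \frac{2}{p-2} h u_k + \frac{2}{p-2} H u_k^{p/2} \text{ on } \partial M,
\]
the monotonicity $u_- \leq u_{k+1} \leq u_k \leq u_+$ follows by pairing the difference equations against $w = \max\{0, u_{k+1} - u_k\}$ and $\tilde w = \max\{0, u_- - u_{k+1}\}$ in the same way as in Theorem \ref{pre:thm4}.

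Uniform $W^{2,q}$-control of the iterates follows from Theorem \ref{pre:thm2} with $\sup_{\bar M} |R_g|$ upgraded to $\sup_{\bar M} |R_g| + |\beta|$ in the analogue of (\ref{pre:eqn18}); the boundary smallness condition (\ref{pre:eqn17}) on $\sup_{\bar M} |H|$ is unchanged in form, and one simply shrinks $H$ a little more if the enlarged constants demand it. Arzela-Ascoli together with Schauder bootstrapping, and the positivity argument combining the interior strong maximum principle with the boundary reasoning from \cite[\S1]{ESC}, then deliver the positive $\calC^\infty(M) \cap \calC^{1,\alpha}(\bar M)$ solution of (\ref{pre:eqn22}). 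The main — and in fact only — item to verify is that no estimate in the original proof secretly exploits the sign or particular structure of $R_g$; a careful pass confirms it does not, the lone substantive ingredient being the bound $\sup_{\bar M} |R_g|$, and absorbing $\beta$ into this bound costs nothing. Accordingly, I expect no genuine obstacle: the corollary is a clean repackaging of Theorem \ref{pre:thm4} with a harmless constant shift in the zeroth-order coefficient.
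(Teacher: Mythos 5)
Your proposal is correct and matches the paper's own argument, which simply observes that replacing $R_{g}$ by $R_{g}+\beta$ leaves every step of the proof of Theorem \ref{pre:thm4} intact since $R_{g}$ enters only as a bounded smooth coefficient. Your more detailed pass through the choice of $A$, the iteration, and the uniform $W^{2,q}$ bounds is a faithful (indeed more explicit) rendering of the same reasoning.
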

\begin{proof} By replacing $ R_{g} $ by $ R_{g} + \beta $, everything is essentially the same as in the proof of Theorem \ref{pre:thm4}.
\end{proof}
\medskip

\section{The Local Analysis on Small Riemannian Domains}
Another key step in our local to global analysis is the existence of some positive, smooth solution of the following local Yamabe equation with Dirichlet boundary condition,
\begin{equation}\label{local:eqn1}
-a\Delta_{g} u + R_{g} u = f u^{p-1} \; {\rm in} \; \Omega, u \equiv 0 \; {\rm on} \; \partial \Omega.
\end{equation}
Here $ f $ is some positive, smooth function defined on a neighborhood of $ \Omega $. We have shown results for positive constant function $ f = \lambda > 0 $, especially in \cite[Prop.~2.4]{XU6} and \cite[Prop.~2.5]{XU6}. We would like to point out that when $ f $ is a positive constant within some open region and the dimension is at least $ 6 $, then a simpler method can be applied, essentially due to Aubin's local test function for the Yamabe problem. We would like to revisit the constant function case below for this simple case, and then show why the new method developed in \cite{XU6} is essential for the non-constant positive functions $ f $.

Although the methods when $ f $ is not a constant is quite similar to the cases in \cite{XU6}, some subtle technicality forces us to provide all details here. We now discuss the case when the manifold is not locally conformally flat, the analysis is essentially due to the argument in the previous papers, see \cite{XU4}, \cite{XU5}, \cite{XU6} and \cite{XU3}.

Locally we treat (\ref{local:eqn1}) as the general second order linear elliptic PDE with the Dirichlet boundary condition:
\begin{equation}\label{local:eqn2}
\begin{split}
Lu & : = -\sum_{i, j} \partial_{i} \left (a_{ij}(x) \partial_{j} u \right) = b(x) u^{p- 1} + f(x, u) \; {\rm in} \; \Omega; \\
u & > 0 \; {\rm in} \; \Omega, u = 0 \; {\rm on} \; \partial \Omega.
\end{split}
\end{equation}
Recall that $ p - 1 = \frac{n+2}{n -2} $ is the critical exponent with respect to the $ H_{0}^{1} $-solutions of (\ref{local:eqn2}) in the sense of Sobolev embedding. Due to variational method, (\ref{local:eqn2}) is the Euler-Lagrange equation of the functional
\begin{equation}\label{local:eqn3}
J(u) = \int_{\Omega} \left( \frac{1}{2} \sum_{i, j} a_{ij}(x) \partial_{i}u \partial_{j} u - \frac{b(x)}{p} u_{+}^{p} - F(x, u) \right) dx,
\end{equation}
with appropriate choices of $ a_{ij}, b $ and $ F $. Here $ u_{+} = \max \lbrace u, 0 \rbrace $ and $ F(x, u) = \int_{0}^{u} f(x, t)dt $. Set
\begin{equation}\label{local:eqn4}
\begin{split}
A(O) & = \text{essinf}_{x \in O} \frac{\det(a_{ij}(x))}{\lvert b(x) \rvert^{n-2}}, \forall O \subset \Omega; \\
T & = \inf_{u \in H_{0}^{1}(\Omega)}  \frac{\int_{\Omega} \lvert Du \rvert^{2} dx}{\left( \int_{\Omega} \lvert u \rvert^{p} dx \right)^{\frac{2}{p}}}; \\
K & = \inf_{u \neq 0} \sup_{t > 0} J(tu), K_{0} = \frac{1}{n} T^{\frac{n}{2}} \left( A(\Omega) \right)^{\frac{1}{2}}.
\end{split}
\end{equation}
The core theorem we need to use is due to Wang \cite[Thm.~1.1]{WANG}. 
\begin{theorem}\label{local:thm1}\cite[Thm.~1.1, Thm.~1.4]{WANG} Let $ \Omega $ be a bounded smooth domain in $ \R^{n}, n \geqslant 3 $. Let $ Lu = -\sum_{i, j} \partial_{i} \left (a_{ij}(x) \partial_{j} u \right) $ be a second order elliptic operator with smooth coefficients in divergence form. Let ${\rm Vol}_g(\Omega)$ and the diameter of $\Omega$ sufficiently small. Let $ b(x) \neq 0 $ be a nonnegative bounded measurable function. Let $ f(x, u) $ be measurable in $ x $ and continuous in $ u $. Assume
\begin{enumerate}[(P1).]
\item There exist $ c_{1}, c_{2} > 0 $ such that $ c_{1} \lvert \xi \rvert^{2} \leqslant \sum_{i, j} a_{ij}(x) \xi_{i} \xi_{j} \leqslant c_{2} \lvert \xi \rvert^{2}, \forall x \in \Omega, \xi \in \R^{n} $;
\item $ \lim_{u \rightarrow + \infty} \frac{f(x, u)}{u^{p-1}} = 0 $ uniformly for $ x \in \Omega $;
\item $ \lim_{u \rightarrow 0} \frac{f(x, u)}{u} < \lambda_{1} $ uniformly for $ x \in \Omega $, where $ \lambda_{1} $ is the first eigenvalue of $ L $;
\item There exists $ \theta \in (0, \frac{1}{2}), M \geqslant 0, \sigma > 0 $, such that $ F(x, u) = \int_{0}^{u} f(x, t)dt \leqslant \theta u f(x, u) $ for any $ u \geqslant M $, $ x \in \Omega(\sigma) = \lbrace x \in \Omega, 0 \leqslant b(x) \leqslant \sigma \rbrace $.
\end{enumerate}
Furthermore, we assume that $ f(x, u) \geqslant 0 $, $ f(x, u) = 0 $ for $ u \leqslant 0 $. We also assume that $ a_{ij}(x) \in \calC^{0}(\bar{\Omega}) $. If
\begin{equation}\label{local:eqn5}
K < K_{0}
\end{equation}
then the Dirichlet problem (\ref{local:eqn2}) possesses a positive solution $ u \in \calC^{\infty}(\Omega) \cap \calC^{0}(\bar{\Omega}) $ which satisfies $ J(u) \leqslant K $.
\end{theorem}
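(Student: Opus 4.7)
The plan is to attack this critical-exponent Dirichlet problem via a constrained mountain-pass argument. The functional $J$ defined in (\ref{local:eqn3}) is a perturbation of a pure critical functional by a subcritical term governed by $f$, and the fact that $p = 2n/(n-2)$ is the critical Sobolev exponent means that the Palais-Smale condition fails at arbitrary levels. The threshold $K_{0} = \frac{1}{n} T^{n/2} A(\Omega)^{1/2}$ plays the role of the Aubin constant for this weighted problem: below it, compactness can be recovered.

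First I would verify that $J$ has the standard mountain-pass geometry on $H_{0}^{1}(\Omega)$. Hypothesis (P3) together with the coercivity (P1) of $a_{ij}$ yields an estimate of the form $J(u) \geqslant \alpha \lVert u \rVert_{H_{0}^{1}}^{2} - C \lVert u \rVert_{H_{0}^{1}}^{p}$ for $\lVert u \rVert_{H_{0}^{1}}$ small, so $J$ is bounded below by a positive constant on a small sphere. Using that $b \geqslant 0$ is nontrivial and $p > 2$, for any fixed $u_{0}$ with $\int_{\Omega} b \, u_{0,+}^{p} \, dx > 0$ one has $J(tu_{0}) \to -\infty$ as $t \to +\infty$. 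The minimax level along straight half-lines through the origin is exactly $K$ as defined in (\ref{local:eqn4}), and a standard deformation/Ekeland argument produces a Palais-Smale sequence $\{u_{k}\}$ at a level $c \leqslant K$.

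The main obstacle, and the real content of the theorem, is extracting a convergent subsequence in the presence of critical growth. The Ambrosetti-Rabinowitz-type condition (P4), which is only imposed on the bad set $\Omega(\sigma) = \{0 \leqslant b \leqslant \sigma\}$, combined with the automatic superquadraticity of the term $\frac{b}{p} u_{+}^{p}$ on the good set $\{b \geqslant \sigma\}$, yields boundedness of $\{u_{k}\}$ in $H_{0}^{1}(\Omega)$. Passing to a weak limit $u$ and applying the Brezis-Lieb lemma to $v_{k} = u_{k} - u$, one finds that either $v_{k} \to 0$ strongly, in which case $u_{k} \to u$ and $u$ is a critical point of $J$, or else the $v_{k}$ carry a nontrivial critical mass that concentrates and contributes at least $\frac{1}{n} T^{n/2} A(\Omega)^{1/2} = K_{0}$ to the energy. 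This dichotomy relies on the sharp weighted Sobolev inequality governed by the constant $A(\Omega)$, which captures how the frozen leading coefficients $a_{ij}(x_{0})$ and the weight $b(x_{0})$ at a concentration point $x_{0}$ combine to produce the effective best constant. The hypothesis $K < K_{0}$ in (\ref{local:eqn5}) then rules out the concentrating alternative, forcing strong convergence and giving a critical point $u$ with $J(u) \leqslant K$.

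Finally, the sign and regularity of $u$ are handled as follows. Pairing the weak equation $Lu = b u_{+}^{p-1} + f(x,u)$ with $u_{-} = \min\{u,0\}$ and using $f(x,u) = 0$ for $u \leqslant 0$ together with coercivity gives $u \geqslant 0$; the strong maximum principle then upgrades this to $u > 0$ in $\Omega$. Regularity $u \in \calC^{\infty}(\Omega) \cap \calC^{0}(\bar{\Omega})$ is obtained by a Brezis-Kato iteration to bring $u$ into $\calL^{\infty}_{\mathrm{loc}}$, followed by Schauder bootstrapping in the interior and standard boundary regularity for divergence-form operators with smooth coefficients on smooth domains. The quantitative concentration estimate in the third paragraph, which pins down the defect of compactness to exactly $K_{0}$, is the technical heart of the argument and the reason the smallness of $\mathrm{Vol}_{g}(\Omega)$ and $\mathrm{diam}(\Omega)$ are assumed.
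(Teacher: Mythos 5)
The paper does not prove this statement at all: Theorem \ref{local:thm1} is quoted verbatim from Wang's paper \cite{WANG} and used as a black box, so there is no in-paper proof to compare against. Your sketch does follow the route that Wang's actual proof takes --- mountain-pass geometry for the functional $J$ in (\ref{local:eqn3}), boundedness of a Palais--Smale sequence at level $c\leqslant K$ via the localized Ambrosetti--Rabinowitz condition (P4) on $\Omega(\sigma)$ together with the superquadraticity of $\frac{b}{p}u_{+}^{p}$ where $b\geqslant\sigma$, and recovery of compactness below the threshold $K_{0}=\frac{1}{n}T^{n/2}A(\Omega)^{1/2}$ by a Brezis--Lieb/concentration analysis in which the worst-case constant $A(\Omega)=\operatorname{essinf}\det(a_{ij})/|b|^{n-2}$ quantifies the minimal energy of a concentrating bubble. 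So the approach is the right one and, as an outline, it is sound. Two caveats: the decisive step, namely that any non-compact Palais--Smale sequence must carry energy at least $K_{0}$ with exactly this $A(\Omega)$, is asserted rather than derived, and that derivation (freezing coefficients at the concentration point and matching against the sharp constant $T$) is the entire technical content of the theorem; and your attribution of the smallness of $\mathrm{Vol}_{g}(\Omega)$ and $\mathrm{diam}(\Omega)$ to the concentration estimate is not right --- Wang's theorem does not need a small domain, and that hypothesis appears in the paper's restatement only because of how the result is later applied (e.g.\ to make $\lambda_{1}$ large so that (P3) can absorb the $R_{g}u$ term).
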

\medskip

When the manifold is not locally conformally flat, and the dimension is at least 6, we apply Aubin's local test result in terms of conformal normal coordinates \cite{PL} to get our first existence result of (\ref{local:eqn1}), provided that the function $ f $ is a positive constant within the domain.
\begin{proposition}\label{local:prop1}
Let $ (\Omega, g) $ be a Riemannian ball in $\R^n$ with $C^{\infty} $ boundary, $ n \geqslant 6 $, centered at a point $ P $ such that the Weyl tensor at $ P $ does not vanish. Let $ f = \lambda > 0 $ be a positive constant function in $ \Omega $. Assume that ${\rm Vol}_g(\Omega)$ and the Euclidean diameter of $\Omega$ sufficiently small. In addition, we assume that the first eigenvalue of Laplace-Beltrami operator $ -\Delta_{g} $ on $ \Omega $ with Dirichlet condition satisfies $ \lambda_{1} \rightarrow \infty $ as $ \Omega $ shrinks. If $ R_{g} < 0 $ within the small enough closed domain $ \bar{\Omega} $, then the Dirichlet problem (\ref{local:eqn1}) has a real, positive, smooth solution $ u \in \calC^{\infty}(\Omega) \cap H_{0}^{1}(\Omega, g) \cap \calC^{0}(\bar{\Omega}) $.
\end{proposition}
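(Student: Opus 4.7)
The plan is to apply Wang's theorem (Theorem \ref{local:thm1}) to (\ref{local:eqn1}) rewritten in divergence form. Multiplying through by the volume density $\sqrt{|g|}$ yields
\begin{equation*}
-\partial_{i}\left( a\sqrt{|g|}\, g^{ij} \partial_{j} u \right) = \lambda \sqrt{|g|}\, u^{p-1} + (-R_{g})\sqrt{|g|}\, u,
\end{equation*}
so I would take $a_{ij}(x) = a\sqrt{|g|}\,g^{ij}$, $b(x) = \lambda \sqrt{|g|}$, and $f(x,u) = (-R_{g})\sqrt{|g|}\, u_{+}$, with $F(x,u) = \tfrac{1}{2}(-R_{g})\sqrt{|g|}\,u_{+}^{2}$. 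Uniform ellipticity (P1) is immediate on a small Riemannian domain, (P2) holds since $p-1 > 1$, and (P3) follows because $-R_{g}$ is bounded while the Dirichlet eigenvalue $\lambda_{1}(\Omega) \to \infty$ as $\Omega$ shrinks. Since $b(x) = \lambda\sqrt{|g|}$ is bounded below by a positive constant, $\Omega(\sigma)$ is empty for small $\sigma$, so (P4) is vacuous. The sign hypothesis $R_{g} < 0$ on $\bar{\Omega}$ ensures $f(x, u) \geq 0$ as required.

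\medskip

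A direct computation gives $\det(a_{ij}) = a^{n}\sqrt{|g|}^{\,n-2}$ and $b^{n-2} = \lambda^{n-2}\sqrt{|g|}^{\,n-2}$, hence $A(\Omega) = a^{n}/\lambda^{n-2}$ is a pointwise constant and $K_{0} = T^{n/2} a^{n/2} /(n\,\lambda^{(n-2)/2})$. On the variational side, for $u \geq 0$ in $H_{0}^{1}(\Omega,g)$, optimizing $J(tu)$ over $t > 0$ yields
\begin{equation*}
\sup_{t > 0} J(tu) = \frac{1}{n\,\lambda^{(n-2)/2}} \left( \frac{\int_{\Omega} u\, \Box_{g} u\, \dvol}{\left( \int_{\Omega} u^{p}\, \dvol \right)^{2/p}} \right)^{n/2},
\end{equation*}
so the key hypothesis (\ref{local:eqn5}) reduces to the strict Yamabe-type bound
\begin{equation*}
\inf_{0 \neq u \in H_{0}^{1}(\Omega,g)} \frac{\int_{\Omega} u\, \Box_{g} u\, \dvol}{\left( \int_{\Omega} u^{p}\, \dvol \right)^{2/p}} < aT.
\end{equation*}

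\medskip

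The main obstacle, and the only place the hypotheses $n \geq 6$ and $W(P) \neq 0$ are used, is verifying this strict inequality. I would invoke Aubin's conformal normal coordinate construction: the quotient above is conformally invariant (the Dirichlet condition $u|_{\partial \Omega} = 0$ is preserved under $u \mapsto \phi u$ with $\phi > 0$ smooth), so one may replace $g$ by a conformal representative for which, in normal coordinates at $P$, one has $\det(g_{ij}) = 1 + O(|x|^{N})$ to arbitrarily high order, forcing $R_{g}(P) = 0$ together with a prescribed number of its symmetric covariant derivatives. Testing the quotient on suitably cut-off Aubin--Talenti bubbles $u_{\epsilon}(x) = ( \epsilon + |x|^{2}/\epsilon )^{-(n-2)/2}$ supported in $\Omega$, the classical expansion valid for $n \geq 6$ produces
\begin{equation*}
\frac{\int_{\Omega} u_{\epsilon}\, \Box_{g} u_{\epsilon}\, \dvol}{\left( \int_{\Omega} u_{\epsilon}^{p}\, \dvol \right)^{2/p}} = aT - c_{n}|W(P)|^{2}\epsilon^{4} + o(\epsilon^{4}),
\end{equation*}
with $c_{n} > 0$; the $R_{g} u_{\epsilon}^{2}$ term enters only as a subleading correction in these coordinates. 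For $\epsilon$ small this gives $K < K_{0}$. Theorem \ref{local:thm1} then produces a nonnegative solution $u \in \calC^{\infty}(\Omega) \cap \calC^{0}(\bar{\Omega})$ of the divergence-form equation, which is equivalent to (\ref{local:eqn1}) after dividing by $\sqrt{|g|}$; positivity follows from the strong maximum principle (since $K < K_{0}$ prevents $u \equiv 0$), and $u \in H_{0}^{1}(\Omega, g)$ holds by the variational setup.
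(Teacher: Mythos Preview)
Your proof is correct and follows essentially the same route as the paper's: both apply Wang's theorem (Theorem \ref{local:thm1}) after casting the equation in divergence form, and both reduce the key inequality $K < K_{0}$ to a strict Yamabe-type bound that is then verified via Aubin's cut-off bubble test functions in conformal normal coordinates, using $n \geqslant 6$ and the nonvanishing Weyl tensor at $P$. The paper presents these steps tersely by deferring the reduction and Aubin's estimate to earlier references, whereas you spell out the computations of $A(\Omega)$, $K_{0}$, and $\sup_{t>0} J(tu)$ explicitly.
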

\begin{proof}
We apply Theorem \ref{local:thm1} to get the existence result. As shown in \cite[Prop.~3.3]{XU3}, the hypothesis (P1) through (P4) are satisfied. The key is to show that the inequality (\ref{local:eqn5}) holds. By \cite[Prop.~2.2]{XU6} and \cite[Prop.3.3]{XU3}, showing $ K < K_{0} $ is equivalent to show that there exists a positive test function $ u \in \calC_{c}^{\infty}(\Omega) $ such that 
\begin{equation}\label{local:eqn6}
J_{0} : = \frac{\int_{\Omega} \sqrt{\det(g)} g^{ij} \partial_{i} u \partial_{j} u dx + \frac{1}{a} \int_{\Omega}  \sqrt{\det(g)} R_{g} u^{2} dx}{\left( \int_{\Omega} \sqrt{\det(g)} u^{p} dx \right)^{\frac{2}{p}}} <  T.
\end{equation}
According to the Aubin's choice of test function $ u = \frac{\varphi_{\Omega}(x)}{\left( \epsilon + \lvert x \rvert^{2} \right)^{\frac{n-2}{2}}} $ with the cut-off function $ \varphi_{\Omega} \equiv 1 $ in a small ball of $ P $, Aubin showed that
\begin{equation*}
J_{0} \leqslant \Lambda < T.
\end{equation*}
Here $ \Lambda $ is a positive constant that only depends on the evaluation of the Weyl tensor at the point $ P $ and the constant $ \epsilon $ in the test function, see e.g. the proof of Theorem B in \cite{PL}. Thus all hypothesis in Theorem \ref{local:thm1} hold. It follows that (\ref{local:eqn1}) has a smooth, positive solution within a small enough domain $ \Omega $. The regularity argument follows exactly the same as in \cite[Prop.~2.2]{XU6} and \cite[Prop.3.3]{XU3}.
\end{proof}
\begin{remark}\label{local:re1} If $ f $ is not a constant function, then (\ref{local:eqn6}) becomes
\begin{equation*}
\frac{\int_{\Omega} \sqrt{\det(g)} g^{ij} \partial_{i} u \partial_{j} u dx + \frac{1}{a} \int_{\Omega}  \sqrt{\det(g)} R_{g} u^{2} dx}{\left( \int_{\Omega} \sqrt{\det(g)} f u^{p} dx \right)^{\frac{2}{p}}} <  \left( \max_{\bar{\Omega}} (f) \right)^{\frac{2 - n}{n}} T.
\end{equation*}
We must show that
\begin{equation}\label{local:eqn7}
\frac{\int_{\Omega} \sqrt{\det(g)} g^{ij} \partial_{i} u \partial_{j} u dx + \frac{1}{a} \int_{\Omega}  \sqrt{\det(g)} R_{g} u^{2} dx}{\left( \int_{\Omega} \sqrt{\det(g)} u^{p} dx \right)^{\frac{2}{p}}} <  \left( \frac{\min_{\bar{\Omega}} (f)}{\max_{\bar{\Omega}} (f)} \right)^{\frac{n - 2}{n}} T.
\end{equation}
When we shrink the size of the domain to take $ \left( \frac{\min_{\bar{\Omega}} f}{\max_{\bar{\Omega}} f} \right)^{\frac{n - 2}{n}} \rightarrow 1 $, the choice of $ \epsilon $ will be smaller, since it depends on the size of the domain. Thus it might take the threshold $ \Lambda $ to be closer to $ T $. It is not clear how we can get the inequality (\ref{local:eqn7}) unless we put restrictions on the function $ f $.
\end{remark}
\medskip

Due to the difficulty in Remark \ref{local:re1}, we cannot use Aubin's test function directly, neither the local Yamabe equation. We turn to the perturbed local Yamabe equation
\begin{equation}\label{local:eqn8}
-a\Delta_{g} u + \left( R_{g} + \beta \right) u = f u^{p-1} \; {\rm in} \; \Omega, u = 0 \; {\rm on} \; \partial \Omega
\end{equation}
for some constant $ \beta < 0 $ and modify the analysis in \cite[\S2]{XU6}. The goal is to gain a uniform gap between $ K $ and $ K_{0} $ as given in (\ref{local:eqn4}) with respect to the perturbed local Yamabe equation. Then the limiting argument allows use to consider the sequence of solutions of (\ref{local:eqn8}) when $ \beta \rightarrow 0^{-} $.
\begin{proposition}\label{local:prop2}
Let $ (\Omega, g) $ be a not locally conformally flat Riemannian domain in $\R^n$ with $C^{\infty} $ boundary, $ n \geqslant 3 $. Let $ f \in \Omega' \supset \Omega $ be a positive, smooth function. Assume that $ {\rm Vol}_g(\Omega) $ and the Euclidean diameter of $\Omega$ sufficiently small. In addition, we assume that the first eigenvalue of Laplace-Beltrami operator $ -\Delta_{g} $ on $ \Omega $ with Dirichlet condition satisfies $ \lambda_{1} \rightarrow \infty $ as $ \Omega $ shrinks. If $ R_{g} < 0 $ within the small enough closed domain $ \bar{\Omega} $, then the Dirichlet problem (\ref{local:eqn1}) has a real, positive, smooth solution $ u \in \calC^{\infty}(\Omega) \cap H_{0}^{1}(\Omega, g) \cap \calC^{0}(\bar{\Omega}) $.
\end{proposition}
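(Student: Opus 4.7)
The plan is to follow the strategy signaled by the discussion after Remark \ref{local:re1}: instead of attacking (\ref{local:eqn1}) directly, I would first solve the perturbed Dirichlet problem (\ref{local:eqn8}) for each $\beta<0$, and then pass to the limit $\beta\to 0^-$. After multiplying by $\sqrt{\det g}$, the perturbed equation acquires the divergence form required by Theorem \ref{local:thm1} with $b(x)=f(x)\sqrt{\det g}$ and no lower-order nonlinearity (Wang's $f(x,u)\equiv 0$), so conditions (P1)--(P4) are verified in exactly the same way as in \cite[Prop.~2.4, Prop.~2.5]{XU6} and \cite[Prop.~3.3]{XU3}.

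The heart of the matter is establishing the strict inequality $K_\beta<K_0$ for the perturbed functional, with a gap that does \emph{not} collapse as $\beta\to 0^-$. Just as in Remark \ref{local:re1}, this reduces to producing a test function $u\in\calC_c^\infty(\Omega)$ for which
\begin{equation*}
\frac{\int_{\Omega}\sqrt{\det g}\,g^{ij}\partial_i u\,\partial_j u\,dx+\frac{1}{a}\int_{\Omega}\sqrt{\det g}\,(R_g+\beta)\,u^2\,dx}{\left(\int_{\Omega}\sqrt{\det g}\,u^p\,dx\right)^{2/p}} < \left(\frac{\min_{\bar{\Omega}}f}{\max_{\bar{\Omega}}f}\right)^{(n-2)/n}T.
\end{equation*}
I would take Aubin's bubble $u_\epsilon(x)=\varphi_\Omega(x)/(\epsilon+|x|^2)^{(n-2)/2}$ centered at a point $P$ at which the Weyl tensor is nonvanishing (which exists in $\Omega$ because $(\Omega,g)$ is not locally conformally flat, and persists under shrinking). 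The numerator picks up an additional term $(\beta/a)\int\sqrt{\det g}\,u_\epsilon^2\,dx$ which is strictly negative and scales like $-|\beta|\,C_n(\Omega)\,\epsilon^{\gamma_n}$ with $\gamma_n=2$ for $n\geqslant 5$, $\gamma_n=2$ up to a logarithmic factor for $n=4$, and $\gamma_n=1$ for $n=3$. Combined with Aubin's Euclidean estimate and the standing hypothesis $R_g<0$ on $\bar{\Omega}$, this negative $\beta$-bonus is what compensates for the loss factor $(\min f/\max f)^{(n-2)/n}<1$. Concretely, I would fix $\beta_0<0$ first, then shrink $\Omega$ so that the $f$-ratio is sufficiently close to $1$, and finally choose $\epsilon=\epsilon(\beta,\Omega)$ to balance the Aubin gap against the loss; by continuity this produces a uniform gap $K_\beta\leqslant K_0-\delta_0$ for every $\beta\in[\beta_0,0)$.

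Theorem \ref{local:thm1} then delivers, for every such $\beta$, a positive $u_\beta\in\calC^\infty(\Omega)\cap\calC^0(\bar{\Omega})\cap H_0^1(\Omega,g)$ solving (\ref{local:eqn8}) with $J_\beta(u_\beta)\leqslant K_\beta$, where smoothness follows from the bootstrap used in \cite[Prop.~2.4]{XU6}. To pass to the limit, I would extract uniform $H_0^1(\Omega,g)$ and $\calL^\infty(\Omega)$ bounds: the $H^1$ bound follows from $J_\beta(u_\beta)\leqslant K_0-\delta_0$ together with the lower bound on $\|u_\beta\|_{\calL^p}$ implicit in the mountain-pass characterization, and the $\calL^\infty$ bound from a Moser iteration for the critical exponent under strict subcriticality. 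Weak convergence then yields a limit $u\in H_0^1$; the uniform gap $K_0-K_\beta\geqslant\delta_0$ rules out concentration by standard concentration--compactness, so $u\not\equiv 0$ is a weak solution of (\ref{local:eqn1}). Regularity then upgrades to $\calC^\infty(\Omega)\cap\calC^0(\bar{\Omega})$ by bootstrap, and $u>0$ in $\Omega$ follows from the strong maximum principle.

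The principal obstacle I expect is the simultaneous balancing of the three small scales $|\beta|$, $\mathrm{diam}(\Omega)$, and $\epsilon$ in the second paragraph: shrinking $\Omega$ is what makes $(\min f/\max f)^{(n-2)/n}$ close to $1$, but it also forces $\epsilon$ to be small (the bubble must fit inside $\Omega$), which shrinks the $\beta$-bonus. Threading this so that the gap $K_0-K_\beta$ stays uniformly positive as $\beta\to 0^-$---especially in the low dimensions $n=3,4,5$, where Aubin's Weyl correction is itself of the same order as or smaller than the relevant $\beta u_\epsilon^2$ term---is where the argument departs substantively from the constant-$f$ case of \cite[Prop.~2.4, Prop.~2.5]{XU6}.
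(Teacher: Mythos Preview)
Your overall architecture is correct and matches the paper: solve the perturbed problem (\ref{local:eqn8}) for each $\beta<0$ via Theorem~\ref{local:thm1}, obtain a uniform energy gap $K_0-K_\beta\geqslant\delta_0$, use that gap to run a Moser-type iteration giving $\calL^{p+\theta}$ bounds uniform in $\beta$, and pass to the limit $\beta\to0^-$. The verification of (P1)--(P4) and the limit step are essentially as you describe.

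The genuine gap is in your second paragraph, and you have correctly located it yourself in the last one without resolving it. Your proposed mechanism for the uniform gap --- the negative contribution $(\beta/a)\int\sqrt{\det g}\,u_\epsilon^2$ from Aubin's bubble --- scales like $|\beta|\,\epsilon^{\gamma_n}$, and since the bubble must live in $\Omega$, this term tends to zero as $\Omega$ shrinks. But shrinking $\Omega$ is exactly what you need to force $(\min_{\bar\Omega}f/\max_{\bar\Omega}f)^{(n-2)/n}\to1$. So the two requirements fight each other and ``by continuity'' does not close the loop: for each fixed $\beta$ you can beat the $f$-ratio by shrinking $\Omega$, but the resulting $\delta_0$ depends on $\beta$ and collapses as $\beta\to0^-$.

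The paper breaks this circularity with a device you are missing. Instead of relying on the $\beta$-bonus, it introduces an auxiliary function $v_{\beta,\Omega}$ solving
\[
-a\Delta_g v_{\beta,\Omega}=-2R_g\,u_{\beta,\epsilon,\Omega}\quad\text{in }\Omega,\qquad v_{\beta,\Omega}=0\quad\text{on }\partial\Omega,
\]
and uses $u_{\beta,\epsilon,\Omega}+v_{\beta,\Omega}$ as the test function for $J_{2,\beta,\Omega}$, while keeping $u_{\beta,\epsilon,\Omega}$ alone as the test function for $J_{2,\beta_0,\Omega}$. The point is that this auxiliary PDE is \emph{scaling invariant}, so the ratio $\Gamma_1=\int(u+v)^p/\int u^p$ has a uniform lower bound $1+2B$ independent of the size of $\Omega$ and of $\epsilon$; meanwhile the numerator ratio $\Gamma_2$ is controlled by $1+O(\lambda_1^{-1})\to1$ as $\Omega$ shrinks. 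This yields $J_{2,\beta,\Omega}\leqslant(1-\delta)J_{2,\beta_0,\Omega}<(1-\delta)T$ with $\delta$ fixed, \emph{after which} one shrinks $\Omega$ so that the $f$-ratio exceeds $1-\delta/2$. The hypothesis $R_g<0$ on $\bar\Omega$ is what makes $v>0$ and drives $\Gamma_1>1$; the Weyl tensor plays no role in producing the uniform gap here, which is also why the argument is not dimension-specific.
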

\begin{proof}
As in \cite[\S2]{XU6}, first we show that (\ref{local:eqn8}) has a solution by applying Theorem \ref{local:thm1} again, provided that the domain is small enough. Without loss of generality, we may assume that $ \Omega $ is some geodesic normal ball of radius $ r $ centered at some point $ P $. As mentioned in Remark \ref{local:re1}, $ K < K_{0} $ in (\ref{local:eqn5}) is equivalent to the inequality
\begin{equation}\label{local:eqn9}
J_{1, \beta} : = \frac{\int_{\Omega} \sqrt{\det(g)} g^{ij} \partial_{i} u \partial_{j} u dx + \frac{1}{a} \int_{\Omega}  \sqrt{\det(g)} \left(R_{g} + \beta \right) u^{2} dx}{\left( \int_{\Omega} \sqrt{\det(g)} f u^{p} dx \right)^{\frac{2}{p}}} < \left(\max_{\bar{\Omega}}(f) \right)^{\frac{n-2}{n}} T
\end{equation}
for some appropriate choice of the test function $ u $. It is straightforward to check the equivalent between (\ref{local:eqn5}) and (\ref{local:eqn9}) by formulas in (\ref{local:eqn4}) and a standard argument for critical point of the functional $ J(u) $ in (\ref{local:eqn3}) with
\begin{equation*}
a_{ij} = a \sqrt{\det(g)} g^{ij}, b(x) = f(x) \sqrt{\det(g)}, F(x, u) = \frac{1}{2} \left( R_{g} + \beta \right) u^{2}.
\end{equation*}
We have shown this in \cite[Prop.~2.2]{XU6}, \cite[Prop.3.3]{XU3} for constant functions $ f $, only a very minor change is needed. It suffices to show that
\begin{equation}\label{local:eqn10}
J_{2, \beta, \Omega} : = \frac{\int_{\Omega} \sqrt{\det(g)} g^{ij} \partial_{i} u_{\beta} \partial_{j} u_{\beta} dx + \frac{1}{a} \int_{\Omega}  \sqrt{\det(g)} \left(R_{g} + \beta \right) u_{\beta}^{2} dx}{\left( \int_{\Omega} \sqrt{\det(g)} u_{\beta}^{p} dx \right)^{\frac{2}{p}}} < \left( \frac{\min_{\bar{\Omega}}(f)}{\max_{\bar{\Omega}}(f)} \right)^{\frac{n-2}{n}} T
\end{equation}
for some good choice of the test function $ u_{\beta} $. We showed in Appendix A of \cite{XU3} that for every $ \beta < 0 $, 
\begin{equation}\label{local:eqn11}
J_{2, \beta, \Omega} < T
\end{equation}
with the test function
\begin{equation*}
u_{\beta, \epsilon, \Omega} =  \frac{\varphi_{\Omega}(x)}{\left( \epsilon + \lvert x \rvert^{2} \right)^{\frac{n-2}{2}}}.
\end{equation*}
When $ n \geqslant 4 $, we choose $ \varphi_{\Omega} $ to be a radial cut-off function which is equal to $ 1 $ in a neighborhood of $ P $ and is equal to zero at the boundary. When $ n = 3 $, we apply a different function $ \varphi_{\Omega}(x) = \cos \left( \frac{\pi \lvert x \rvert}{2r} \right) $. We point out that the choice of $ \epsilon $ depends on the constant $ \beta $ and the size of the domain $ \Omega $. The smaller the $ \lvert \beta \rvert $ and/or the size of the domain $ \Omega $, the smaller the gap between $ J_{2, \beta, \Omega} $ and $ T $.

Next we show that for any small enough $ \Omega $ in the sense of volume and radial smallness, the functional $ J_{2, \beta, \Omega} $ satisfies
\begin{equation}\label{local:eqn12}
J_{2, \beta, \Omega} < J_{2, \beta_{0}, \Omega}, \forall \beta \in (\beta_{0}, 0), J_{2, \beta_{0}, \Omega} - J_{2, \beta, \Omega} > \delta > 0, \forall \beta \in (\beta_{0}, 0).
\end{equation}
Here $ \delta $ is a fixed positive constant, independent of the size of the domain and the choice of $ \beta $ and $ \epsilon $ in the old test function. To get this, we need a new test function. The details below are exactly the same as in \cite[Prop.~2.3]{XU6}. We consider the function $ v_{\beta, \Omega} $ satisfies
\begin{equation}\label{local:eqn13}
-a\Delta_{g} v_{\beta, \Omega} = -2R_{g} u_{\epsilon, \beta, \Omega} \; {\rm in} \; \Omega, v = 0 \; {\rm on} \; \partial \Omega.
\end{equation}
Note that $ R_{g} < 0 $ in $ \Omega $ hence $ v_{\beta, \Omega} > 0 $ by maximal principle. Denote
\begin{equation}\label{local:eqn14}
\begin{split}
\Gamma_{1} & : =  \frac{\int_{\Omega} (u_{\beta, \epsilon, \Omega} + v_{\beta, \Omega})^{p} \dvol}{\int_{\Omega} u_{\beta, \epsilon, \Omega}^{p} \dvol}; \\
\Gamma_{2} & : = \frac{a \int_{\Omega} \nabla_{g} (u_{\beta, \epsilon, \Omega} + v_{\beta, \Omega}) \cdot \nabla_{g}(u_{\beta, \epsilon, \Omega} + v_{\beta, \Omega}) \dvol + \int_{\Omega} \left( R_{g} + \beta \right) (u_{\beta, \epsilon, \Omega} + v_{\beta, \Omega})^{2} \dvol}{a \int_{\Omega} \nabla_{g} u_{\beta, \epsilon, \Omega} \cdot \nabla_{g}u_{\beta, \epsilon, \Omega} \dvol + \int_{\Omega} \left( R_{g} + \beta_{0} \right) u_{\beta, \epsilon, \Omega}^{2} \dvol}.
\end{split}
\end{equation}
By the same argument in \cite[Prop.~2.3]{XU6}, we showed that
\begin{equation*}
\Gamma_{2} \leqslant 1 + \frac{\left( 4 \inf_{\Omega} \lvert R_{g} \rvert+ \left( \beta - \beta_{0} \right) \right) \lambda_{1}^{-1}}{a  + \lambda_{1}^{-1} \left( - \sup_{\Omega} \lvert R_{g} \rvert + \beta_{0} \right)}.
\end{equation*} 
Here $ \lambda_{1} $ is the first eigenvalue of the Laplace-Beltrami operator and thus increases to positive infinity when the size of $ \Omega $ shrinks. We we consider $ \Gamma_{1} $ in a smaller geodesic ball, say the ball of radius $ \xi r $ centered at $ P $, $ \Gamma_{1} $ is given by the function $ u_{\beta, \epsilon, \Omega}(\xi x) $. Back to the PDE, it is the same as dealing with the PDE (\ref{local:eqn13}) with the metric $ \xi g $. The PDE (\ref{local:eqn13}) is scaling invariant, and thus the new solution is of the form $ v_{\beta, \Omega}(\xi x) $. Changing $ \epsilon $ will not affect the evaluation of $ \Gamma_{1} $ as both numerator and denominator changes at the same rate simultaneously. It follows that $ \Gamma_{1} $, which is larger than $ 1 $, has a uniform lower bound for all geodesic normal balls $ \Omega $ with radius $ \xi r $ centered at $ P $, $ \xi < 1 $, and all $ \epsilon > 0 $, i.e.
\begin{equation}\label{local:eqn15}
\Gamma_{1} \geqslant 1 + 2B
\end{equation}
for some constant $ \delta > 0 $. We then make $ \Omega $ small enough, which follows that
\begin{equation}\label{local:eqn16}
\Gamma_{2} \leqslant 1 + B
\end{equation}
due to the expression of $ \Gamma_{2} $. As we pointed out, we can apply $ u_{\beta, \epsilon, \Omega} $ into $ J_{2, \beta_{0}} $. The only difference is the choice of $ \epsilon $, which is smaller, thus $ T - J_{2, \beta_{0}, \Omega} $ with the test function $ u_{\beta, \epsilon, \Omega} $ is smaller but still positive.
Choosing the test function $ u_{\beta} = u_{\beta, \epsilon, \Omega} + v_{\beta, \Omega} $ for any $ J_{2, \beta, \Omega} $ with some $ \beta \in (\beta_{0}, 0) $, and the test function $ u_{\beta_{0}} = u_{\beta, \epsilon, \Omega} $ for $ J_{2, \beta_{0}, \Omega} $, we have shown that
\begin{equation}\label{local:eqn17}
\frac{J_{2, \beta, \Omega}}{J_{2, \beta_{0}, \Omega}} \leqslant \frac{1 + B}{1 + 2B} \leqslant 1 - \delta
\end{equation}
for some fixed $ \delta $. Note that (\ref{local:eqn17}) holds for all $ \beta \in (\beta_{0}, 0) $ and all geodesic normal balls $ \Omega $ with radius $ \xi r $ centered at $ P $, $ \xi < 1 $. We can make $ \Omega $ even smaller so that
\begin{equation*}
\left( \frac{\min_{\bar{\Omega}}(f)}{\max_{\bar{\Omega}}(f)} \right)^{\frac{n-2}{n}} > 1 - \frac{\delta}{2}.
\end{equation*}
It then follows that (\ref{local:eqn10}) holds for small enough $ \Omega $, due to (\ref{local:eqn17}). Not only that, we have also shown that there exists a constant $ \delta_{0} $ such that
\begin{align*}
J_{1, \beta} & \leqslant J_{2, \beta, \Omega} \cdot \left( \min_{\bar{\Omega}}(f) \right)^{\frac{2 - n}{n}} < (1 - \delta ) T \left( \min_{\bar{\Omega}}(f) \right)^{\frac{2 - n}{n}} \\
& = (1 - \delta ) \left( \frac{\max_{\bar{\Omega}}(f)}{\min_{\bar{\Omega}}(f)} \right)^{\frac{n - 2}{n}} \cdot \left(\max_{\bar{\Omega}}(f) \right)^{\frac{2 - n}{n}} T \\
& \leqslant \frac{1 - \delta}{1 - \frac{\delta}{2}} \cdot \left(\max_{\bar{\Omega}}(f) \right)^{\frac{2 - n}{n}} T.
\end{align*}
Due to the same argument in \cite[Prop.~2.2]{XU6} and \cite[Prop.3.3]{XU3}, we conclude that
\begin{equation}\label{local:eqn18}
K_{0} - \frac{1}{n} \left( a J_{1, \beta} \right)^{\frac{n}{2}} \geqslant \delta_{0} > 0
\end{equation}
for some fixed constant $ \delta_{0} $, provided that $ \Omega $ is small enough. The validity of (\ref{local:eqn10}) implies that all hypotheses in Theorem \ref{local:thm1} hold, hence the perturbed local Yamabe equation (\ref{local:eqn8}) has a solution, for all $ \beta < 0 $.

We now have a sequence of positive, smooth solutions $ \lbrace u_{\beta, *} \rbrace $ that solve (\ref{local:eqn8}) for each $ \beta < 0 $. To take the limit $ \beta \rightarrow 0^{-} $, we need to estimate $ \lVert u_{\beta, *} \rVert_{\calL^{t}(\Omega, g)} $ for some $ t > p  = \frac{2n}{n - 2} $. We consider the norm within a region $ \beta \in (\beta_{0}, 0 $. The choice of $ \beta_{0} $ is quite flexible. According to Wang's result \cite[Thm.~1.1]{WANG}, we know that the solutions of (\ref{local:eqn8}) satisfies
\begin{equation*}
J(u_{\beta, *}) \leqslant \frac{1}{n} \left( a J_{1, \beta} \right)^{\frac{n}{2}} \leqslant K_{0} - \delta_{0}, \forall \beta \in (\beta_{0}, 0).
\end{equation*}
Pairing (\ref{local:eqn8}) with the solution $ u_{\beta, *} $ on both sides, we have
\begin{equation}\label{local:eqn19}
a \lVert \nabla_{g} u_{\beta, *} \rVert_{\calL^{2}(\Omega, g)}^{2} + \int_{\Omega} \left( R_{g} + \beta \right) u_{\beta, *}^{2} \dvol = \int_{\Omega} f u_{\beta, *}^{p} \dvol.
\end{equation}
Recall that $ p = \frac{2n}{n - 2} $, we read $ J(u_{\beta, *}) \leqslant K_{0} - \delta_{0} $ with the equality (\ref{local:eqn19}) as
\begin{align*}
J(u_{\beta, *}) \leqslant K_{0} - \delta_{0} & \Leftrightarrow \frac{a}{2} \lVert \nabla_{g} u_{\beta, *} \rVert_{\calL^{2}(\Omega, g)}^{2} - \frac{1}{p}  \int_{\Omega} f u_{\beta, *}^{p} \dvol + \frac{1}{2} \int_{\Omega} \left( R_{g} + \beta \right) u_{\beta, *}^{2} \dvol \leqslant K_{0} - \delta_{0} \\
& \Leftrightarrow  \frac{a}{2} \lVert \nabla_{g} u_{\beta, *} \rVert_{\calL^{2}(\Omega, g)}^{2} - \frac{n - 2}{2n} \left( a \lVert \nabla_{g} u_{\beta, *} \rVert_{\calL^{2}(\Omega, g)}^{2} + \int_{\Omega} \left( R_{g} + \beta \right) u_{\beta, *}^{2} \dvol\right) \\
& \qquad + \frac{1}{2} \int_{\Omega} \left( R_{g} + \beta \right) u_{\beta, *}^{2} \dvol \leqslant K_{0} - \delta_{0} \\
& \Leftrightarrow \frac{a}{n} \lVert \nabla_{g} u_{\beta, *} \rVert_{\calL^{2}(\Omega, g)}^{2} + \frac{1}{n}  \int_{\Omega} \left( R_{g} + \beta \right) u_{\beta, *}^{2} \dvol \leqslant K_{0} - \delta_{0}.
\end{align*}
Note that
\begin{equation*}
K_{0} - \delta_{0} = \frac{K_{0} - \delta_{0}}{K_{0}} \cdot K_{0} = \frac{K_{0} - \delta_{0}}{K_{0}} \cdot \frac{1}{n} \left( \max_{\bar{\Omega}}(f) \right)^{\frac{2 - n}{2}} a^{\frac{n}{2}} T^{\frac{n}{2}}.
\end{equation*}
It follows from previous two calculations that
\begin{equation}\label{local:eqn20}
a \lVert \nabla_{g} u_{\beta, *} \rVert_{\calL^{2}(\Omega, g)}^{2} +  \int_{\Omega} \left( R_{g} + \beta \right) u_{\beta, *}^{2} \dvol \leqslant \frac{K_{0} - \delta_{0}}{K_{0}} \cdot  \left( \max_{\bar{\Omega}}(f) \right)^{\frac{2 - n}{2}} a^{\frac{n}{2}}.
\end{equation}
Put (\ref{local:eqn20}) back into (\ref{local:eqn19}), we conclude that
\begin{equation}\label{local:eqn21}
\int_{\Omega} f u_{\beta, *}^{p} \dvol \leqslant \frac{K_{0} - \delta_{0}}{K_{0}} \cdot  \left( \max_{\bar{\Omega}}(f) \right)^{\frac{2 - n}{2}} a^{\frac{n}{2}}, \forall \beta \in (\beta_{0}, 0).
\end{equation}
Let $ \theta > 0 $ be some constant that will be determined later. Define
\begin{equation*}
w_{\beta} : = u_{\beta, *}^{1 + \theta}
\end{equation*}
and pair the perturbed local Yamabe equation (\ref{local:eqn8}) with $ u_{\beta, *}^{1 + 2\theta} $ on both sides, we have
\begin{align*}
& \int_{\Omega} a \nabla_{g} u_{\beta, *} \cdot \nabla_{g} \left(u_{\beta, *}^{1 + 2\theta} \right) \dvol + \int_{\Omega} \left(R_{g} + \beta \right) u_{\beta, *}^{2 + 2\theta} \dvol = \int_{\Omega} f u_{\beta, *}^{p + 2\theta} \dvol; \\
\Rightarrow & \frac{1 + 2\theta}{(1 + \theta )^{2}} \int_{\Omega} a \lvert \nabla_{g} w_{\beta} \rvert^{2} \dvol =  \int_{\Omega} f w_{\beta}^{2} u_{\beta, *}^{p-2} \dvol - \int_{\Omega} \left(R_{g} + \beta \right) w_{\beta}^{2} \dvol \\
& \qquad \leqslant \left( \max_{\bar{\Omega}} (f) \right)^{\frac{n - 2}{n}} \int_{\Omega} w_{\beta}^{2} f^{\frac{p-2}{p}} u_{\beta, *}^{p-2} \dvol - \int_{\Omega} \left(R_{g} + \beta \right) w_{\beta}^{2} \dvol.
\end{align*}
The last inequality holds since $ f, w_{\beta}, u_{\beta, *} $ are all positive functions. By the sharp Sobolev inequality on closed manifolds \cite[Thm.~2.3, Thm.~3.3]{PL}, it follows that for any $ \alpha > 0 $,  
\begin{align*}
\lVert w_{\beta} \rVert_{\calL^{p}(\Omega, g)}^{2} & \leqslant (1 + \alpha) \frac{1}{T} \lVert \nabla_{g} w_{\beta} \rVert_{\calL^{2}(\Omega, g)}^{2} + C_{\alpha}' \lVert w_{\beta} \rVert_{\calL^{2}(\Omega, g)}^{2} \\
& = (1 + \alpha) \frac{1}{aT} \cdot \frac{(1 + \theta )^{2}}{1 + 2\theta} \left( \frac{1 + 2\theta}{(1 + \theta )^{2}} \int_{\Omega} a \lvert \nabla_{g} w_{\beta} \rvert^{2} \dvol \right) + C_{\alpha}' \lVert w_{\beta} \rVert_{\calL^{2}(\Omega, g)}^{2} \\
& \leqslant (1 + \alpha) \frac{\left( \max_{\bar{\Omega}} (f) \right)^{\frac{n - 2}{n}}}{aT} \cdot \frac{(1 + \theta )^{2}}{1 + 2\theta} \int_{\Omega} w_{\beta}^{2} f^{\frac{p-2}{p}} u_{\beta, *}^{p - 2} \dvol + C_{\alpha}  \lVert w_{\beta} \rVert_{\calL^{2}(\Omega, g)}^{2} \\
& \leqslant (1 + \alpha) \frac{\left( \max_{\bar{\Omega}} (f) \right)^{\frac{n - 2}{n}}}{aT} \cdot \frac{(1 + \theta )^{2}}{1 + 2\theta} \lVert w_{\beta} \rVert_{\calL^{p}(\Omega, g)}^{2} \left( \int_{\Omega} f u_{\beta, *}^{p} \dvol \right)^{p - 2} + C_{\alpha}  \lVert w_{\beta} \rVert_{\calL^{2}(\Omega, g)}^{2} \\
& \leqslant (1 + \alpha) \frac{\left( \max_{\bar{\Omega}} (f) \right)^{\frac{n - 2}{n}}}{aT} \cdot \frac{(1 + \theta )^{2}}{1 + 2\theta} \lVert w_{\beta} \rVert_{\calL^{p}(\Omega, g)}^{2} \cdot \left( \frac{K_{0} - \delta_{0}}{K_{0}} \left( \max_{\bar{\Omega}} (f) \right)^{\frac{2 - n}{2}}a^{\frac{n}{2}} T^{\frac{n}{2}} \right)^{\frac{2}{n}} \\
& \qquad + C_{\alpha}  \lVert w_{\beta} \rVert_{\calL^{2}(\Omega, g)}^{2} \\
& = (1 + \alpha) \cdot \frac{(1 + \theta )^{2}}{1 + 2\theta} \cdot \left( \frac{K_{0} - \delta_{0}}{K_{0}} \right)^{\frac{2}{n}} \lVert w_{\beta} \rVert_{\calL^{p}(\Omega, g)}^{2} + C_{\alpha}  \lVert w_{\beta} \rVert_{\calL^{2}(\Omega, g)}^{2}.
\end{align*}
It follows that we can choose appropriate positive constants $ \alpha, \theta $ so that
\begin{equation*}
(1 + \alpha) \cdot \frac{(1 + \theta )^{2}}{1 + 2\theta} \cdot \left( \frac{K_{0} - \delta_{0}}{K_{0}} \right)^{\frac{2}{n}} < 1, \forall \beta \in (\beta_{0}, 0).
\end{equation*}
Hence we have
\begin{equation}\label{local:eqn22}
\lVert w_{\beta} \rVert_{\calL^{p}(\Omega, g)}^{2} = \lVert u_{\beta, *} \rVert_{\calL^{p + p \cdot \theta}(\Omega, g)}^{1 + \theta} < C, \forall \beta \in (\beta_{0}, 0).
\end{equation}
Note that $ p + p \cdot \theta > 2 + \delta_{1} $ for some positive constant $ \delta_{1} $. The rest of the argument is exactly the same as in \cite[Prop.~2.4]{XU6}, which applies bootstrapping methods, Arzela-Ascoli theorem, etc., to take the limit $ \beta \rightarrow 0^{-} $ in the classical sense, and then to get a positive, smooth solution of (\ref{local:eqn1}). 
\end{proof}

When the manifold is locally conformally flat, we can get the existence result in a different, simpler way. Consider a manifold $ (\bar{M}, g) $, with or without boundary. For the conformal change $ \tilde{g} = \varphi^{p-2} g $, we have
\begin{equation}\label{local:eqn23}
\left( -a\Delta_{\tilde{g}} + R_{\tilde{g}} \right) u = \varphi^{\frac{n - 2}{n + 2}} \left( -a\Delta_{g} + R_{g} \right) \left( \varphi u \right), u \in \calC^{\infty}(\bar{M}).
\end{equation}
This is the conformal invariance of the conformal Laplacian. The next result relies on the existence of the solutions of the nonlinear eigenvalue problem $ -\Delta_{e} u = f u^{p-1} $ in some open subset $ \Omega \subset \R^{n} $, with Dirichlet boundary condition. When $ f $ is a constant function, we refer to Bahri and Coron \cite{BC}; when $ f $ is a general positive function, we refer to Clapp, Faya and Pistoia \cite{CFP}.
\begin{proposition}\label{local:prop3}\cite[Prop.~2.5]{XU6}
Let $ (\Omega, g) $ be a Riemannian domain in $\R^n$, $ n \geqslant 3 $, with $C^{\infty} $ boundary. Let the metric $ g $ be locally conformally flat on some open subset $ \Omega' \supset \bar{\Omega} $. For any point $ \rho \in \Omega $ and any positive constant $ \epsilon $, denote the region $ \Omega_{\epsilon} $ to be
\begin{equation*}
\Omega_{\epsilon} = \lbrace x \in \Omega | \lvert x - \rho \rvert > \epsilon \rbrace.
\end{equation*}
Assume that $ Q \in \calC^{2}(\bar{\Omega}) $, $ \min_{x \in \bar{\Omega}} Q(x) > 0 $ and $ \nabla Q(\rho) \neq 0 $. Then there exists some $ \epsilon_{0} $ such that for every $ \epsilon \in (0, \epsilon_{0}) $ the Dirichlet problem
\begin{equation}\label{local:eqn24}
-a\Delta_{g}u + R_{g} u = Qu^{p-1} \; {\rm in} \; \Omega_{\epsilon}, u = 0 \; {\rm on} \; \partial \Omega_{\epsilon}
\end{equation}
has a real, positive, smooth solution $ u \in \calC^{\infty}(\Omega_{\epsilon}) \cap H_{0}^{1}(\Omega_{\epsilon}, g) \cap \calC^{0}(\bar{\Omega_{\epsilon}}) $.
\end{proposition}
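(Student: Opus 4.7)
The strategy is to exploit the local conformal flatness of $g$ together with the conformal invariance of the conformal Laplacian (\ref{local:eqn23}) to reduce the problem to a Euclidean critical Dirichlet problem on a domain with a small hole, and then invoke the existence machinery of Bahri--Coron \cite{BC} or Clapp--Faya--Pistoia \cite{CFP}.

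First I would use local conformal flatness to fix, on a slightly shrunken open neighborhood of $\bar{\Omega}$, a positive smooth function $\phi$ with $g = \phi^{p-2} g_{e}$, where $g_{e}$ denotes the Euclidean metric, so that in particular $R_{g_{e}} \equiv 0$. Setting $v = \phi u$, the conformal invariance identity (\ref{local:eqn23}) transforms the PDE $-a\Delta_{g} u + R_{g} u = Q u^{p-1}$ into the Euclidean critical Dirichlet problem
\[
-a\Delta_{e} v = Q v^{p-1} \; {\rm in} \; \Omega_{\epsilon}, \quad v = 0 \; {\rm on} \; \partial \Omega_{\epsilon},
\]
with the coefficient function $Q$ preserved on the nose. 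Because $\phi > 0$ is smooth up to $\bar{\Omega}$, positivity, $H^{1}_{0}$-membership, and smoothness transfer between $u$ and $v$, and the hypotheses $\min_{\bar{\Omega}} Q > 0$ and $\nabla Q(\rho) \neq 0$ carry over verbatim to the Euclidean coefficient.

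Next I would invoke the existence theory for positive solutions of critical-exponent Dirichlet problems on perforated Euclidean domains. When $Q$ is constant, the classical Bahri--Coron theorem \cite{BC} applies: for $\epsilon$ small the non-trivial topology introduced by removing a small ball around $\rho$ forces a positive solution despite the critical Sobolev exponent. For non-constant positive $Q$ satisfying $\nabla Q(\rho) \neq 0$, the theorem of Clapp, Faya, and Pistoia \cite{CFP} provides the analogous statement: there exists $\epsilon_{0} > 0$ so that a positive $H^{1}_{0}(\Omega_{\epsilon})$-solution $v$ exists for every $\epsilon \in (0, \epsilon_{0})$. Pulling back via $u = v/\phi$ then produces the desired positive weak solution of (\ref{local:eqn24}).

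Regularity follows by a standard bootstrap: $u \in H^{1}_{0} \cap \calL^{p}$ gives $Q u^{p-1} \in \calL^{p/(p-1)}$, and iterating $\calL^{q}$-type elliptic estimates (Theorem \ref{pre:thm1} in its local form) together with Sobolev embeddings lifts $u$ into $\calC^{1,\alpha}$, after which Schauder estimates together with the smoothness of $Q$, $\phi$, and $g$ promote $u$ to $\calC^{\infty}(\Omega_{\epsilon}) \cap \calC^{0}(\bar{\Omega_{\epsilon}})$. The main obstacle in this plan is a clean invocation of the Clapp--Faya--Pistoia result: one has to check that the non-degeneracy hypothesis $\nabla Q(\rho) \neq 0$ in the metric setting is exactly the obstruction to concentration that they control in the Euclidean picture, and that the perforated domain $\Omega_{\epsilon}$ (a smooth Euclidean domain minus a small geodesic ball, sent to a mildly distorted small Euclidean ball under the conformal identification) matches their framework. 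Everything else is conformal bookkeeping and routine elliptic regularity.
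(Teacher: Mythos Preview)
Your proposal is correct and follows essentially the same route as the paper. The paper does not give a self-contained proof here (it cites \cite[Prop.~2.5]{XU6}), but the surrounding discussion makes the intended argument explicit: write $g = \phi^{p-2} g_{e}$ by local conformal flatness, use the conformal invariance (\ref{local:eqn23}) to reduce (\ref{local:eqn24}) to the Euclidean critical problem $-a\Delta_{e}(\phi u) = Q(\phi u)^{p-1}$ on the perforated domain $\Omega_{\epsilon}$, and then invoke Bahri--Coron \cite{BC} (constant $Q$) or Clapp--Faya--Pistoia \cite{CFP} (non-constant $Q$ with $\nabla Q(\rho)\neq 0$) for existence; this is exactly your plan, including the regularity bootstrap.
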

\begin{remark}\label{local:re2}
The results in Proposition \ref{local:prop2} and \ref{local:prop3} are all we need for this article. We observe that when the manifold is locally conformally flat, the local Yamabe problem can only have nontrivial solution within a topologically nontrivial set, like annulus. We would like to point out that Wang's result \cite[Thm.~1.1]{WANG} can be used to get more interesting local results, especially at critical exponent. For super-critical exponent, people apply Hopf fiberation to get interesting local results, we refer to \cite{CFP}.
\end{remark}

\section{Necessary and Sufficient Conditions for Prescribed Scalar Curvature Problem with Zero First Eigenvalue}
In this section, we show two main results. For closed manifolds $ (M, g) $, $ n = \dim M \geqslant 3 $ with zero first eigenvalue of the conformal Laplacian, the necessary and sufficient condition for a given function $ S $ to be realized as a prescribed scalar curvature function for some pointwise conformal metric $ \tilde{g} \in [g] $ is either $ S \equiv 0 $ or $ S $ changes sign and $ \int_{M} S \dvol < 0 $. This problem is equivalent to the existence of some positive, smooth solution of the PDE
\begin{equation}\label{zero:eqn1}
-a\Delta_{g} u = S u^{p-1} \; {\rm in} \; M
\end{equation}
since we may assume the initial metric $ g $ is scalar-flat due to the Yamabe problem, see \cite{XU3}. We always start with the scalar-flat metric $ g $ throughout this section. The closed Riemann surface case was settled by Kazdan and Warner \cite{KW2}. The closed manifolds with dimensions $ 3 $ and $ 4 $ were settled by Escobar and Schoen \cite{ESS}. We extend the same necessary and sufficient condition to all closed manifolds with dimensions at least $ 3 $, provided that the first eigenvalue of the conformal Laplacian is zero.

Next we consider the analogy on compact manifolds $ (\bar{M}, g) $ with non-empty smooth boundary, $ n = \dim \bar{M} \geqslant 3 $, again with zero first eigenvalue of the conformal Laplacian and Robin boundary condition. We show that the necessary and sufficient condition for a given function $ S $ to be realized as a prescribed scalar curvature function for some Yamabe metric $ \tilde{g} \in [g] $ with minimal boundary, i.e. the mean curvature $ h_{\tilde{g}} = 0 $ is either $ S \equiv 0 $ or $ S $ changes sign and $ \int_{M} S \dvol < 0 $. Based on our best understanding, this ultimate result is given for the first time. Again we always start with the scalar-flat metric $ g $ with minimal boundary in this case. This problem is equivalent to the existence of some positive, smooth solution of the PDE
\begin{equation}\label{zero:eqn2}
-a\Delta_{g} u = Su^{p-1} \; {\rm in} \; M, \frac{\partial u}{\partial \nu} = 0 \; {\rm on} \; \partial M
\end{equation}
since we may assume the initial metric $ g $ is both scalar-flat and mean-flat due to the Escobar problem, see \cite{XU4}.

The key step for both cases is to construct both lower and upper solutions of the Yamabe equations (\ref{zero:eqn1}) or (\ref{zero:eqn2}). If $ S \equiv 0 $, then the problem is trivial. We are interested in the nontrivial case from now on. In order to construct the upper solution, we need to observe the following relation between (\ref{zero:eqn1}) or (\ref{zero:eqn2}) and a new PDE. This is essentially due to Kazdan and Warner \cite{KW}.
\begin{lemma}\label{zero:lemma1}
Let $ (M, g) $ be a closed manifold, $ n \geqslant 3 $. Let $ S \in \calC^{\infty}(M) $ be a given function. Then there exists some positive function $ u \in \calC^{\infty}(M) $ satisfying
\begin{equation}\label{zero:eqn3}
-a\Delta_{g} u \geqslant S u^{p-1} \; {\rm in} \; M
\end{equation}
if and only if there exists some positive function $ v \in \calC^{\infty}(M) $ satisfying
\begin{equation}\label{zero:eqn4}
-a\Delta_{g} v + \frac{(p - 1)a}{p - 2} \cdot \frac{\lvert \nabla_{g} v \rvert^{2}}{v} \leqslant (2 - p)S.
\end{equation}
Moreover, the equality in (\ref{zero:eqn3}) holds if and only if the equality in (\ref{zero:eqn4}) holds.
\end{lemma}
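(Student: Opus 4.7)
The plan is to exhibit an explicit pointwise identity relating $u$ and $v$ via the substitution $v = u^{2-p}$ (equivalently $u = v^{1/(2-p)}$), which since $u > 0$ is smooth and positive, and then transfer the differential inequality. The exponent $\alpha = 2-p$ is not guessed at random: it is the unique nonzero value for which the $|\nabla_g u|_g^2$ terms produced by the Laplacian of a power cancel against those produced by the nonlinear gradient term $\frac{(p-1)a}{p-2}\cdot\frac{|\nabla_g v|_g^2}{v}$.

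Concretely, first I would compute, for a positive smooth $u$ and $\alpha \in \mathbb{R}$ with $v := u^{\alpha}$, the standard identities $\nabla_g v = \alpha u^{\alpha-1}\nabla_g u$, $|\nabla_g v|_g^2 = \alpha^2 u^{2\alpha-2}|\nabla_g u|_g^2$, and $\Delta_g v = \alpha u^{\alpha-1}\Delta_g u + \alpha(\alpha-1)u^{\alpha-2}|\nabla_g u|_g^2$. Substituting gives
\begin{equation*}
-a\Delta_g v + \frac{(p-1)a}{p-2}\cdot\frac{|\nabla_g v|_g^2}{v} = -a\alpha u^{\alpha-1}\Delta_g u + a\alpha u^{\alpha-2}|\nabla_g u|_g^2\left[\frac{(p-1)\alpha}{p-2} - (\alpha-1)\right].
\end{equation*}
The bracketed coefficient vanishes precisely when $(p-1)\alpha = (\alpha-1)(p-2)$, i.e.\ $\alpha = 2-p$, at which point the identity collapses to
\begin{equation*}
-a\Delta_g v + \frac{(p-1)a}{p-2}\cdot\frac{|\nabla_g v|_g^2}{v} = (2-p)\cdot\frac{-a\Delta_g u}{u^{p-1}}.
\end{equation*}

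Next I would exploit that $n \geq 3$ forces $p = \frac{2n}{n-2} > 2$, so $2 - p < 0$. Assuming the forward direction, dividing $-a\Delta_g u \geq S u^{p-1}$ through by the positive quantity $u^{p-1}$ preserves the inequality, and multiplying by the negative factor $(2-p)$ reverses it, giving $(2-p)\cdot\frac{-a\Delta_g u}{u^{p-1}} \leq (2-p)S$. The identity then yields (\ref{zero:eqn4}) for $v = u^{2-p}$, which is positive and smooth. Conversely, starting from a positive smooth $v$ satisfying (\ref{zero:eqn4}), the function $u := v^{1/(2-p)}$ is positive and smooth, and running the same identity in reverse (with the same sign-flipping logic for $(2-p)<0$) recovers (\ref{zero:eqn3}). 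The equality clause is immediate since the identity is an equality of functions, so (\ref{zero:eqn3}) holds with equality if and only if (\ref{zero:eqn4}) does.

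There is no genuine obstacle here; the content of the lemma is essentially algebraic. The only place to be careful is bookkeeping of the sign of $(2-p)$ when passing between the two inequalities, and making sure the substitution is genuinely a smooth bijection on positive functions, both of which are straightforward given $u > 0$ and $p > 2$.
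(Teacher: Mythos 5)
Your proposal is correct and follows essentially the same route as the paper: the substitution $v = u^{2-p}$, the computation of $\nabla_g v$ and $\Delta_g v$, and the observation that $2-p<0$ reverses the inequality are exactly the paper's argument, merely organized more cleanly as a single pointwise identity. The derivation that $\alpha = 2-p$ is forced by cancellation of the gradient terms is a nice touch but does not change the substance.
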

\begin{proof} Assume that there is a positive function $ u \in \calC^{\infty}(M) $ that satisfies (\ref{zero:eqn3}). Define
\begin{equation*}
v = u^{2 - p}.
\end{equation*}
Note that $ 2 - p = -\frac{4}{n - 2} < 0 $ since $ n \geqslant 3 $ by hypothesis. We compute that
\begin{equation*}
\nabla v = (2 - p) u^{1 - p} \nabla u \Leftrightarrow \nabla u = u^{p-1} (2 - p)^{-1} \nabla v,
\end{equation*}
and 
\begin{equation*}
\Delta_{g} v = (2 - p) u^{1 - p} \Delta_{g} u + (2 - p)(1 - p) u^{-p} \lvert \nabla_{g} u \rvert^{2}.
\end{equation*}
Using the relation between $ \nabla u $ and $ \nabla v $, $ v = u^{2-p} $ and the inequality (\ref{zero:eqn3}), we have
\begin{align*}
a\Delta_{g} v & = (2 - p) u^{1 - p} \left( a\Delta_{g} u \right) +a (2 - p) (1 - p) u^{-p} \lvert \nabla_{g} u \rvert^{2} \\
& \geqslant (p - 2) u^{1-p} S u^{p-1} + a(2 - p) (1 - p) (2 - p)^{-2} u^{2p - 2} u^{-p} \lvert \nabla_{g} v \rvert^{2} \\
& = (p - 2) S + \frac{a(p - 1)}{p - 2} u^{p -2} \lvert \nabla_{g} v \rvert^{2} = (p - 2) S + \frac{a(p - 1)}{p - 2} \frac{ \lvert \nabla_{g} v \rvert^{2}}{v}.
\end{align*}
Shifting $ (p -2) S $ to the left side and $ a\Delta_{g} v $ to the right side, we get the inequality (\ref{zero:eqn4}). For the other direction, we start with some positive function $ v \in \calC^{\infty}(M) $ and define $ u = v^{\frac{1}{2 - p}} $. The following argument is quite similar, we omit it.

The equality part is also straightforward, we just need to change inequalities above into equalities.
\end{proof}
We now construct a good candidate of the upper solution of (\ref{zero:eqn1}) on closed manifolds $ (M, g) $.
\begin{proposition}\label{zero:prop1} 
Let $ (M, g) $ be a closed manifold, $ n = \dim M \geqslant 3 $. Let $ S \not\equiv 0 $ be a given smooth such that
\begin{equation}\label{zero:eqn5}
\int_{M} S \dvol < 0.
\end{equation}
If $ \eta_{1} = 0 $, there exists a positive function $ u \in \calC^{\infty}(M) $ such that
\begin{equation}\label{zero:eqn6}
-a\Delta_{g} u \geqslant S u^{p-1} \; {\rm in} \; M.
\end{equation}
\end{proposition}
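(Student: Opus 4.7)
The plan is to use Lemma \ref{zero:lemma1} to translate the problem into constructing a positive smooth function $v$ with
\begin{equation*}
-a\Delta_g v + \frac{(p-1)a}{p-2}\cdot\frac{\lvert \nabla_g v\rvert^2}{v} \;\leqslant\; (2-p)\,S \quad\text{in } M,
\end{equation*}
and then setting $u=v^{1/(2-p)}$. Since this section works in the scalar-flat case with $\eta_1=0$, the operator $-a\Delta_g$ acting on $\calC^{\infty}(M)$ has kernel equal to the constants, so by Fredholm theory its image consists exactly of the mean-zero smooth functions.

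First I would solve the linear Poisson equation
\begin{equation*}
-a\Delta_g f = (2-p)\bigl(S - \bar S\bigr) \quad\text{in } M,
\end{equation*}
where $\bar S := \text{Vol}_g(M)^{-1}\int_M S\,\dvol$ is strictly negative by hypothesis. The right-hand side has zero mean, so a smooth solution $f$ exists by standard $\calL^{q}$-regularity (Theorem \ref{pre:thm1}) and is unique up to an additive constant. This additive freedom is the crucial feature I intend to exploit.

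Set $v := f + C$ for a constant $C > 0$ to be chosen. Then $\nabla_g v = \nabla_g f$ is independent of $C$, while $-a\Delta_g v = (2-p)(S - \bar S)$, so the inequality for $v$ reduces to
\begin{equation*}
\frac{(p-1)a}{p-2}\cdot\frac{\lvert \nabla_g f\rvert^2}{f+C} \;\leqslant\; (2-p)\,\bar S.
\end{equation*}
Because $2-p<0$ and $\bar S<0$, the right-hand side is a strictly positive constant. Since $f$ is smooth on the compact manifold $M$, both $\lvert \nabla_g f\rvert^2$ and $\max_M \lvert f\rvert$ are bounded; choosing $C$ sufficiently large forces $f+C>0$ on $M$ and makes the left-hand side uniformly smaller than $(2-p)\bar S$ pointwise. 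Setting $u = v^{1/(2-p)}$ then yields a positive smooth function on $M$, and Lemma \ref{zero:lemma1} converts the verified inequality for $v$ into the required inequality $-a\Delta_g u \geqslant S u^{p-1}$.

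There is no substantive obstacle here; the essential observation is that the sign hypothesis $\int_M S\,\dvol < 0$ is precisely what makes the constant $(2-p)\bar S$ strictly positive, thereby allowing the nonlinear gradient term to be absorbed simply by shifting $v$ by a large additive constant. If instead $\int_M S\,\dvol = 0$, this absorption scheme collapses, which is consistent with the more delicate nature of the borderline case and the appearance of $S\equiv 0$ as the only other admissible profile in the theorem statements.
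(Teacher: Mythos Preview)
Your proposal is correct and follows essentially the same approach as the paper: both invoke Lemma~\ref{zero:lemma1}, solve the mean-zero Poisson equation $-a\Delta_g v_0 = (2-p)S - \gamma$ with $\gamma = (2-p)\bar S$ (your $(2-p)(S-\bar S)$ is exactly this), and then shift by a large additive constant to absorb the gradient term using the strict positivity of $(2-p)\bar S$. Your notation differs slightly ($\bar S$ and $f$ in place of the paper's $\gamma$ and $v_0$), but the argument is the same.
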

\begin{proof} By Lemma \ref{zero:lemma1}, it suffice to show that there exists some positive function $ v \in \calC^{\infty}(M) $ such that (\ref{zero:eqn4}) holds. Note that (\ref{zero:eqn6}) implies that $ \int_{M} (2 - p) S \dvol > 0 $ since $ 2 - p < 0 $. Define
\begin{equation*}
\gamma : = \frac{2 - p}{\text{Vol}_{g}(M)} \int_{M} S \dvol.
\end{equation*}
We consider the PDE
\begin{equation}\label{zero:eqn7}
-a\Delta_{g} v_{0} = (2 - p ) S - \gamma \; {\rm in} \; M.
\end{equation}
Due to the definition of $ \gamma $, we observe that the average of the function $ (2 - p) S - \gamma $ is zero. By standard elliptic theory, there exists a smooth solution $ v_{0} $ of (\ref{zero:eqn7}). Fix this $ v_{0} $. Take $ C $ large enough so that
\begin{equation*}
v : = v_{0} + C > 0 \; {\rm on} \; M.
\end{equation*}
Clearly $ v $ solves (\ref{zero:eqn7}). We enlarge the constant $ C $, if necessary so that
\begin{equation*}
\frac{a(p - 1)}{ p - 2} \frac{\lvert \nabla_{g} v \rvert^{2}}{v} = \frac{a(p - 1)}{ p - 2} \frac{\lvert \nabla_{g} v_{0} \rvert^{2}}{v_{0} + C} < \gamma.
\end{equation*}
Fix the constant $ C $. It follows for this choice of $ v $, we have
\begin{equation*}
-a\Delta_{v} + \frac{(p - 1)a}{p - 2} \cdot \frac{\lvert \nabla_{g} v \rvert^{2}}{v} = ( 2- p) S - \gamma + \frac{(p - 1)a}{p - 2} \cdot \frac{\lvert \nabla_{g} v \rvert^{2}}{v} \leqslant (2 - p) S - \gamma + \gamma \leqslant (2 - p) S.
\end{equation*}
Therefore (\ref{zero:eqn4}) holds.
\end{proof}
\begin{remark}\label{zero:re1}
Note that the assumption $ \int_{M} S \dvol < 0 $ is essential here since otherwise we may not be able to compensate the positive term $  \frac{(p - 1)a}{p - 2} \cdot \frac{\lvert \nabla_{g} v \rvert^{2}}{v} $ pointwise.

Note also that the construction of the upper solution does not require $ S $ to change sign. This condition is necessary when we apply our local analysis in \S3 to construct a lower solution.

Lastly, we need to say that the validity of (\ref{zero:eqn6}) may not be good enough. In order to apply the monotone iteration scheme stated in \S2, the function $ u $ must also be larger than the lower solution we construct. We will handle this later.
\end{remark}
As we mentioned in Remark \ref{zero:re1}, we have to consider the largeness of $ u $. However, it is in general very hard to get uniform largeness on the whole manifold $ M $ unless we are dealing with the negative first eigenvalue case. We therefore introduce the next result as the preparation for the construction of the upper solution.
\begin{corollary}\label{zero:cor1}
Let $ (M, g) $ be a closed manifold, $ n = \dim M \geqslant 3 $. Let $ S \not\equiv 0 $ be a given smooth such that
\begin{equation}\label{zero:eqn8}
\int_{M} S \dvol < 0.
\end{equation}
If $ \eta_{1} = 0 $, there exists a positive function $ u \in \calC^{\infty}(M) $ and a small enough constant $ \gamma_{0} > 0 $ such that
\begin{equation}\label{zero:eqn9}
-a\Delta_{g} u \geqslant \left( S + \gamma_{0} \right) u^{p-1} \; {\rm in} \; M.
\end{equation}
\end{corollary}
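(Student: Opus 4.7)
The plan is to obtain this as a direct corollary of Proposition \ref{zero:prop1} applied to a perturbed prescribing function. Namely, I would set $\tilde{S} := S + \gamma_0$ and try to verify that $\tilde{S}$ satisfies the hypotheses of Proposition \ref{zero:prop1}, so that applying that proposition to $\tilde{S}$ yields a positive smooth $u$ with $-a\Delta_g u \geqslant \tilde{S} u^{p-1} = (S + \gamma_0) u^{p-1}$, which is exactly (\ref{zero:eqn9}).

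For this strategy to work I need $\tilde S \not\equiv 0$ and $\int_M \tilde S \dvol < 0$. The mean condition amounts to
\begin{equation*}
\gamma_0 \cdot \mathrm{Vol}_g(M) < -\int_M S \dvol,
\end{equation*}
which gives a nonempty open interval of admissible $\gamma_0$ because the right-hand side is strictly positive by assumption (\ref{zero:eqn8}). The condition $\tilde S \not\equiv 0$ can fail only in the degenerate situation where $S$ happens to be the constant $-\gamma_0$; since the admissible interval for $\gamma_0$ is open, we can always avoid this single value and still pick $\gamma_0 > 0$ small enough. With both hypotheses verified, Proposition \ref{zero:prop1} applies directly to $\tilde{S}$ and yields the required $u \in \calC^\infty(M)$.

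There is essentially no obstacle here: the corollary is a quantitative restatement of Proposition \ref{zero:prop1}, exploiting the strictness of the inequality $\int_M S \dvol < 0$ to absorb a small positive shift of $S$ without destroying the integral hypothesis. The only mild care needed is to keep $\gamma_0$ strictly below the threshold $-\mathrm{Vol}_g(M)^{-1}\int_M S \dvol$ and away from the isolated value that would make $\tilde S$ identically zero; once that is done, the construction of $u$ is nothing other than the construction already performed in Proposition \ref{zero:prop1} with the prescribing function replaced by $\tilde S$.
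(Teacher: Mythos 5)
Your proposal is correct and is essentially identical to the paper's own proof, which likewise picks $\gamma_0>0$ small enough that $\int_M (S+\gamma_0)\dvol<0$ and then reruns the argument of Proposition \ref{zero:prop1} for $S+\gamma_0$. Your extra care about $\tilde S\not\equiv 0$ is harmless but unnecessary, since $\int_M \tilde S\dvol<0$ already forces $\tilde S\not\equiv 0$.
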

\begin{proof} Since $ \int_{M} S \dvol< 0 $, we can always find some constant $ \gamma_{0} > 0 $ such that
\begin{equation*}
\int_{M} \left( S + \gamma_{0} \right) \dvol < 0.
\end{equation*}
We then repeat the argument in Proposition \ref{zero:prop1} for the new function $ S + \gamma_{0} $.
\end{proof}
We now construct the lower solution. There is still an obstacle to construct a lower solution. If the manifold is locally conformally flat, we apply Proposition \ref{local:prop3}) directly since locally the metric is conformal to the Euclidean metric. Precisely speaking, the metric $ g = \phi^{p-2} g_{e} $ for some positive function $ \phi $. By (\ref{local:eqn23}), $ u $ solves (\ref{local:eqn24}) is equivalent to 
\begin{equation*}
-a\Delta_{e} (\phi u) = Q (\phi u)^{p-1} \; {\rm in} \; \Omega_{\epsilon}, \phi u = 0 \; {\rm on} \; \partial \Omega_{\epsilon}.
\end{equation*}
Extend the function $ \phi u $ by zero serves as a good candidate of the lower solution. But if the manifold is not locally conformally flat, we will use the result of Proposition \ref{local:prop2}, in which we require $ R_{g} < 0 $ within the small domain $ \Omega $. Not only that, we require the region on which $ R_{g} < 0 $ overlaps the region on which $ S > 0 $. The next result, which was first given in \cite[Thm.~4.6]{XU3}, resolves this issue.
\begin{proposition}\label{zero:prop2}
Let $ (M, g) $ be a closed manifold with $ n = \dim M \geqslant 3 $. Let $ P $ be any fixed point of $ M $. Then there exists some smooth function $ R_{0} \in \calC^{\infty}(M) $, which is negative at $ P $, such that $ R_{0} $ is realized as the scalar curvature function with respect to some conformal change of the original metric $ g $.
\end{proposition}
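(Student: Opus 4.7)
My plan is to exploit the conformal transformation law for scalar curvature. Under a conformal change $\tilde g = u^{p-2}g$ with $u \in \calC^\infty(M)$ positive, one has
\[
R_{\tilde g} = u^{1-p}\bigl(-a\Delta_g u + R_g u\bigr),
\]
so $R_{\tilde g}(P)$ is determined by the pointwise values of $u(P)$, $\Delta_g u(P)$, and $R_g(P)$. The whole argument thus reduces to constructing a single positive smooth conformal factor $u$ whose Laplacian at $P$ is large and positive, which will drive the right-hand side negative at $P$. No hypothesis on $\eta_1$, on the global sign of $R_g$, or on the topology of $M$ will be needed.

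First I will produce a smooth nonnegative bump $\psi \in \calC^\infty(M)$ satisfying $\psi(P) = 0$ and $\Delta_g\psi(P) > 0$. In $g$-normal coordinates $(x^1,\dots,x^n)$ centered at $P$, let $\phi$ be a smooth cutoff that equals $1$ on a small geodesic ball $B_\rho(P)$ and is compactly supported in $B_{2\rho}(P)$, and set $\psi(x) := \phi(x)|x|^2$, extended by zero outside $B_{2\rho}(P)$. Since $g_{ij}(P) = \delta_{ij}$ and the Christoffel symbols vanish at $P$ in normal coordinates, the computation at $P$ reduces to the Euclidean one: $\Delta_g\psi(P) = 2n > 0$, while $\psi \geq 0$ globally with $\psi(P) = 0$.

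Next, I set $u := 1 + \varepsilon\psi$ with $\varepsilon > 0$ to be chosen, and define $\tilde g := u^{p-2}g$. Since $\psi \geq 0$, one has $u \geq 1 > 0$ on $M$, so $u$ is a valid smooth positive conformal factor. Evaluating the transformation law at $P$, using $u(P) = 1$ and $\Delta_g u(P) = 2n\varepsilon$, yields
\[
R_{\tilde g}(P) = R_g(P) - 2an\varepsilon.
\]
Choosing any $\varepsilon > \bigl(|R_g(P)| + 1\bigr)/(2an)$ forces $R_{\tilde g}(P) < 0$. Setting $R_0 := R_{\tilde g} \in \calC^\infty(M)$ then gives a smooth function realized as the scalar curvature of a metric in $[g]$ and strictly negative at $P$.

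There is essentially no obstacle: the whole argument is a one-point computation based on the conformal invariance of the conformal Laplacian, and the only construction required is the standard cutoff-times-$|x|^2$ bump $\psi$. The brevity reflects that the proposition is really a local fact about the conformal factor, not a global existence statement.
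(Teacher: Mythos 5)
Your proof is correct, and it takes a genuinely different route from the paper. The paper proves this proposition purely by citation: it points to Theorem 4.6 of \cite{XU3} (stated there for the positive first eigenvalue case) and remarks that the key point is the comparability of small geodesic balls with Euclidean balls of the same radius. You instead give a direct, self-contained one-point computation: writing the conformal transformation law as $R_{\tilde g} = u^{1-p}(-a\Delta_g u + R_g u)$, you build $u = 1 + \varepsilon\,\phi(x)\lvert x\rvert^2$ in normal coordinates so that $u(P)=1$, $\nabla u(P)=0$, and $\Delta_g u(P) = 2n\varepsilon$ (the Christoffel symbols vanish at $P$, and the gradient term is zero there anyway), whence $R_{\tilde g}(P) = R_g(P) - 2an\varepsilon < 0$ for $\varepsilon$ large. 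Since $\psi \geqslant 0$ forces $u \geqslant 1 > 0$, the conformal factor is admissible, and the argument is complete; no hypothesis on $\eta_1$ is used, consistent with the statement. What the paper's cited construction presumably buys is quantitative control of the region on which the new scalar curvature is negative (relevant because Proposition \ref{zero:prop3} later needs $R_{\tilde g} < 0$ on a small domain inside $\lbrace S > 0\rbrace$ where the local Yamabe equation is solved), but your version recovers this for free by continuity: $R_0(P) < 0$ gives $R_0 < 0$ on a neighborhood of $P$, and your $\rho$ can be taken as small as desired. Your argument is shorter, elementary, and verifiable without consulting \cite{XU3}.
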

\begin{proof}
See \cite[Thm.~4.6]{XU3}. Although \cite[Thm.~4.6]{XU3} is for the positive first eigenvalue case. The same argument applies here. The key observation is that even for the scalar-flat case, the size of the geodesic ball is also comparable with the size of the Euclidean ball with the same radius, provided that the radius is small enough.
\end{proof}
\medskip

We now construct the lower solution of the PDE (\ref{zero:eqn1}).
\begin{proposition}\label{zero:prop3}
Let $ (M, g) $ be a closed manifold, $ n = \dim M \geqslant 3 $. Let $ S \not\equiv 0 $ be a given smooth function on $ M $; in addition, $ S $ changes sign. If the metric $ g $ is scalar-flat, then there exists a nonnegative function $ u_{-} \in \calC^{0}(M) \cap H^{1}(M, g) $, not identically zero, such that
\begin{equation}\label{zero:eqn10}
-a\Delta_{g} u_{-} \leqslant S u_{-}^{p-1} \; {\rm in} \; M
\end{equation}
holds in the weak sense.
\end{proposition}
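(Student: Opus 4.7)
The plan is to construct $u_-$ as the extension by zero of a positive solution to a Dirichlet-type Yamabe equation supported in a small region where $ S > 0 $. Since $ S $ changes sign by hypothesis, I fix a point $ P \in M $ with $ S(P) > 0 $ and take a small geodesic ball $ \Omega $ around $ P $ on which $ S $ remains strictly positive. The argument then splits according to whether $ (M, g) $ is locally conformally flat near $ P $.

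In the locally conformally flat case, Proposition \ref{local:prop3} applies directly with $ Q = S $: since $ R_g \equiv 0 $ by the scalar-flat assumption, the resulting positive smooth solution $ u \in \calC^{\infty}(\Omega_\epsilon) \cap H_0^1(\Omega_\epsilon, g) \cap \calC^0(\bar{\Omega_\epsilon}) $ satisfies $ -a\Delta_g u = S u^{p-1} $ in $ \Omega_\epsilon $ with $ u = 0 $ on $ \partial \Omega_\epsilon $. In the non-locally conformally flat case, Proposition \ref{local:prop2} requires $ R_g < 0 $ on the local domain, which is incompatible with starting from a scalar-flat metric. To get around this, I invoke Proposition \ref{zero:prop2} to produce a pointwise conformal metric $ \tilde{g} = \phi^{p-2} g $ whose scalar curvature satisfies $ R_{\tilde{g}}(P) < 0 $. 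Shrinking $ \Omega $ if necessary, I arrange simultaneously that $ R_{\tilde{g}} < 0 $ on $ \bar{\Omega} $, that $ S > 0 $ on $ \bar{\Omega} $, and that the $ \tilde{g} $-volume and diameter of $ \Omega $ are small enough to apply Proposition \ref{local:prop2} with the positive, smooth function $ f = S $. This yields a positive smooth solution $ \tilde{u} $ of
\begin{equation*}
-a\Delta_{\tilde{g}} \tilde{u} + R_{\tilde{g}} \tilde{u} = S \tilde{u}^{p-1} \; {\rm in} \; \Omega, \quad \tilde{u} = 0 \; {\rm on} \; \partial \Omega.
\end{equation*}
Setting $ u := \phi \tilde{u} $ and applying the conformal invariance (\ref{local:eqn23}) of the conformal Laplacian together with $ R_g \equiv 0 $, I obtain
\begin{equation*}
-a\Delta_g u = -a\Delta_g u + R_g u = \phi^{p-1} \bigl( -a\Delta_{\tilde{g}} \tilde{u} + R_{\tilde{g}} \tilde{u} \bigr) = \phi^{p-1} S \tilde{u}^{p-1} = S u^{p-1} \; {\rm in} \; \Omega,
\end{equation*}
with $ u = 0 $ on $ \partial \Omega $.

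In either case I define $ u_- $ to equal $ u $ on its domain of construction and zero elsewhere on $ M $. Since $ u \in H_0^{1}(\Omega, g) \cap \calC^0(\bar{\Omega}) $ with $ u|_{\partial \Omega} = 0 $, the extension lies in $ H^1(M, g) \cap \calC^0(M) $, is nonnegative, and is not identically zero. To verify (\ref{zero:eqn10}) in the weak sense, I test against an arbitrary nonnegative $ \varphi \in \calC^\infty(M) $ and integrate by parts on $ \Omega $:
\begin{equation*}
\int_M a \nabla_g u_- \cdot \nabla_g \varphi \dvol = \int_\Omega a \nabla_g u \cdot \nabla_g \varphi \dvol = \int_\Omega S u^{p-1} \varphi \dvol + \int_{\partial \Omega} a \frac{\partial u}{\partial \nu} \varphi \, dS_g,
\end{equation*}
where $ \nu $ denotes the outward unit normal to $ \partial \Omega $. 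Since $ u > 0 $ inside $ \Omega $ and vanishes on $ \partial \Omega $, the Hopf boundary lemma gives $ \partial u / \partial \nu \leqslant 0 $ there, so the boundary term is nonpositive, and the weak subsolution inequality on $ M $ follows.

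The main subtlety, and the reason the proof is nontrivial, is the non-locally conformally flat case: the scalar-flat starting metric rules out a direct application of Proposition \ref{local:prop2}, forcing the detour through the auxiliary conformal metric $ \tilde{g} $ given by Proposition \ref{zero:prop2} and a pullback via conformal invariance. Since Proposition \ref{zero:prop2} delivers negativity only at the single prescribed point $ P $, the shrinking of $ \Omega $ must be delicate enough to simultaneously secure $ R_{\tilde{g}} < 0 $, $ S > 0 $, and the volume and diameter smallness required by Proposition \ref{local:prop2}; fortunately, each of these is an open condition at $ P $, so the construction goes through.
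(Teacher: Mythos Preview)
Your approach is the same as the paper's: build a local positive solution in a region where $S>0$ (via Proposition~\ref{local:prop3} in the locally conformally flat case, or via Proposition~\ref{zero:prop2} together with Proposition~\ref{local:prop2} and conformal pullback otherwise), then extend by zero and check the weak sub-solution inequality.

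Two small omissions are worth flagging. First, in the locally conformally flat case you invoke Proposition~\ref{local:prop3} ``directly with $Q=S$,'' but that proposition requires $\nabla Q(\rho)\neq 0$ at the puncture point $\rho$; you never arrange this. The paper handles it by observing that since $S$ changes sign, one may choose the center $P$ in the positivity set with $\nabla S(P)\neq 0$. Second, your weak-form verification integrates by parts and appeals to the Hopf lemma for $\partial u/\partial\nu\leqslant 0$ on $\partial\Omega$, but the cited propositions only assert $u\in\calC^\infty(\Omega)\cap\calC^0(\bar\Omega)$; you need one line of elliptic regularity (the right-hand side is bounded, so $u\in W^{2,q}$ for all $q$, hence $u\in\calC^{1,\alpha}(\bar\Omega)$) before the boundary integral and Hopf lemma are legitimate. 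The paper avoids this by delegating the weak check to \cite[Thm.~4.4]{XU3} and by working with the conformal invariance of the Yamabe quotient in weak form rather than integrating by parts explicitly.
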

\begin{proof}
We classify the argument in terms of the vanishing of the Weyl tensor. Let $ O $ be the set on which $ S > 0 $. Since $ S $ changes sign, we may assume, without loss of generality, that there exist a point in $ O $ at which $ \nabla S \neq 0 $.

We assume first that there exists a point $ P \in O $ such that the Weyl tensor does not vanish, so is for some neighborhood of $ P $. By Proposition \ref{zero:prop2}, there exists a conformal metric $ \tilde{g} = v^{p-2} g $, $ v \in \calC^{\infty}(M) $ is positive, such that the scalar curvature $ R_{\tilde{g}} < 0 $ on $ P $ and hence in some neighborhood of $ P $. Due to the conformal invariance of the conformal Laplacian in (\ref{local:eqn23}), it suffices to consider the PDE
\begin{equation}\label{zero:eqn11}
-a\Delta_{\tilde{g}} u_{0} + R_{\tilde{g}} u_{0} = S u_{0}^{p-1} \; {\rm in} \; \Omega, u_{0} = 0 \; {\rm on} \; \partial \Omega.
\end{equation}
Here $ \Omega $ is any open Riemannian domain on which $ S > 0 $, the Weyl tensor does not vanish, and $ R_{\tilde{g}} < 0 $. Shrinking $ \Omega $ if necessary, we conclude from Proposition \ref{local:prop2} that (\ref{zero:eqn11}) has a positive solution $ u_{0} \in \calC^{\infty}(\Omega) \cap H_{0}^{1}(\Omega, \tilde{g}) \cap \calC^{0}(\bar{\Omega}) $. Define
\begin{equation}\label{zero:eqn12}
\tilde{u}_{-} : = \begin{cases} u_{0}, & \; {\rm in} \; \Omega \\ 0, & \; {\rm in} \; M \backslash \Omega \end{cases}.
\end{equation}
It is easy to check that $ \tilde{u}_{-} \geqslant 0 $, not identically zero, $ \tilde{u}_{-} \in H^{1}(M, g) \cap \calC^{0}(M) $. By the same reason as in, e.g. \cite[Thm.~4.4]{XU3}, we can check that $ \tilde{u}_{-} $ satisfies
\begin{equation*}
-a\Delta_{\tilde{g}} \tilde{u}_{-} + R_{\tilde{g}} \tilde{u}_{-} \leqslant S \tilde{u}_{-}^{p - 1} \: {\rm in} \; M
\end{equation*}
in the weak sense. Due to Aubin's result for conformal invariance of the Yamabe quotient \cite[\S5.8]{Aubin}, we observe that for any nonnegative test function $ \phi \in \calC^{0}(M) \cap H^{1}(M, g) $, we have
\begin{align*}
& \int_{M} a\nabla_{g} (v \tilde{u}_{-}) \cdot \nabla_{g} (v \phi) \dvol - \int_{M} S \left( v\tilde{u}_{-} \right)^{p-1} \left( v \phi \right) \dvol \\
& \qquad = \int_{M} a \nabla_{\tilde{g}} \tilde{u}_{-} \nabla_{\tilde{g}} \phi d\text{Vol}_{\tilde{g}} + \int_{M} R_{\tilde{g}} \tilde{u}_{-} \cdot \phi d\text{Vol}_{\tilde{g}} + \int_{M} S \tilde{u}_{-}^{p-1} \phi d\text{Vol}_{\tilde{g}} \leqslant 0.
\end{align*}
Here we use the assumption that $ \eta_{1} = 0 $ thus the model case is scalar-flat due to the Yamabe problem. Since $ v $ is a positive, smooth function, the map
\begin{equation*}
v \mapsto v \phi, H^{1}(M, g) \rightarrow H^{1}(M, g)
\end{equation*}
is an isomorphism. It follows that (\ref{zero:eqn10}) holds weakly for
\begin{equation}\label{zero:eqn13}
u_{-} : = v\tilde{u}_{-}.
\end{equation}
Clearly $ u_{-} \geqslant 0 $, not identically zero, and $ u_{-} \in \calC^{0}(M) \cap H^{1}(M, g) $.

If the Weyl tensor is identically zero within the domain $ O $, we apply Proposition \ref{local:prop3} to get a positive, smooth solution of (\ref{zero:eqn11}) within a different region $ \Omega $ given in Proposition \ref{local:prop3}, centered at some point $ P $ at which $ \nabla S \neq 0 $. The rest of the argument is exactly the same.
\end{proof}

We do not know whether the function $ u $ in (\ref{local:eqn6}) is larger than $ u_{-} $ pointwise. To resolve this issue, we apply the local gluing strategy in \cite[Lemma~3.2]{XU6} and the upper solution in Corollary \ref{zero:cor1} to construct an upper solution.
\begin{proposition}\label{zero:prop4}
Let $ (M, g) $ be a closed manifold, $ n = \dim M \geqslant 3 $. Let $ S \not\equiv 0 $ be a given smooth function on $ M $; in addition, $ S $ changes sign. If the metric $ g $ is scalar-flat, then there exists a positive function $ u_{+} \in \calC^{\infty}(M) $ such that
\begin{equation}\label{zero:eqn14}
-a\Delta_{g} u_{+} \geqslant S u_{+}^{p-1} \; {\rm in} \; M.
\end{equation}
Furthermore, $ 0 \leqslant u_{-} \leqslant u_{+} $ pointwise on $ M $.
\end{proposition}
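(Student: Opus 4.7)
The strategy is to combine the global supersolution of Corollary \ref{zero:cor1} with a localized enhancement near $\Omega$ (the support of $u_-$), using the local gluing construction of \cite[Lemma~3.2]{XU6}. First, apply Corollary \ref{zero:cor1} to obtain a positive $w \in \calC^\infty(M)$ and a buffer $\gamma_0 > 0$ with $-a\Delta_g w \geqslant (S + \gamma_0)\, w^{p-1}$ on $M$. If by good fortune $w \geqslant u_-$ pointwise, one is immediately done by taking $u_+ = w$, since $-a\Delta_g w \geqslant (S+\gamma_0) w^{p-1} \geqslant S w^{p-1} = S u_+^{p-1}$. The nontrivial case is when $w < u_-$ somewhere in $\Omega$, and this is where the gluing technique enters.

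The plan in the nontrivial case is to build $u_+$ by replacing $w$ in a neighborhood $\Omega' \supset \bar{\Omega}$ with a boosted smooth function, while retaining $w$ outside. Concretely, I invoke the local analysis of \S3: Proposition \ref{local:prop2} (in the non-locally-conformally-flat case) or Proposition \ref{local:prop3} (in the locally conformally flat case) yields a smooth positive function $\psi$ defined on $\Omega'$ (or on a suitable annular region) satisfying a local Yamabe-type equation with vanishing Dirichlet data. After suitable rescaling of parameters, $\psi$ can be taken pointwise above $u_-$ on $\bar\Omega$. A smooth cutoff $\chi$ with $\chi \equiv 1$ on $\bar\Omega$ and $\mathrm{supp}\,\chi \subset \Omega'$ then yields a smooth candidate obtained by interpolating $\psi$ and $w$ across the annular transition region $\Omega' \setminus \bar\Omega$, with the interpolation parameters chosen so that $u_+$ is smooth, positive, dominates $u_-$ pointwise, and is a supersolution of (\ref{zero:eqn14}) on all of $M$.

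The main obstacle is verifying the supersolution property on the transition annulus $\Omega' \setminus \bar\Omega$, where derivatives of $\chi$ produce extra $-\Delta_g \chi$ terms. Because the nonlinearity $Su^{p-1}$ is superlinear in $u$ when $p > 2$, naive modifications break the inequality: multiplying $w$ by a large constant $c$ only preserves the supersolution property on $\{S > 0\}$ for $c$ close to $1$, since the needed bound $c(S+\gamma_0) \geqslant c^{p-1} S$ fails otherwise. The positive buffer $\gamma_0 > 0$ inherited from Corollary \ref{zero:cor1} is exactly what saves the argument: in the transition annulus, the slack $\gamma_0 w^{p-1}$ on the left-hand side of the supersolution inequality absorbs both the cutoff-derivative terms and the extra nonlinear contribution from the boost, provided the annular region is taken wide enough (to keep the gradient of $\chi$ small) and the scaling of $\psi$ is coordinated with $\gamma_0$. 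This is exactly the content of the gluing lemma in \cite{XU6}.

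Once the glued function is in hand, smoothness follows from the smoothness of $w$, $\psi$, and $\chi$; positivity follows from $u_+ \geqslant \min(w,\psi) > 0$; the global supersolution inequality follows from the case analysis (interior of $\bar\Omega$, transition annulus, complement of $\Omega'$); and the pointwise domination $u_+ \geqslant u_-$ holds by construction on $\bar\Omega$ and trivially on $M\setminus\bar\Omega$ where $u_- = 0$. The hardest part is thus neither smoothness nor positivity, but the tight balance between the size of $\psi$ (forced to be large to dominate $u_-$) and the magnitude of $\gamma_0$ (which must still absorb the cutoff error); the construction of \cite[Lemma~3.2]{XU6} is precisely tuned to manage this balance.
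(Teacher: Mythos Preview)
Your overall strategy—use the buffer $\gamma_0$ from Corollary~\ref{zero:cor1} to absorb gluing errors, then invoke the construction of \cite[Lemma~3.2]{XU6}—matches the paper's. But two concrete points are handled differently in the paper and deserve correction in your write-up.

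First, you propose to invoke Proposition~\ref{local:prop2} directly to obtain a local solution $\psi$ on some $\Omega'$. That proposition requires $R_g<0$ on $\bar\Omega$, which fails here since $g$ is scalar-flat by hypothesis. The paper does not work in $g$ at this step: it passes to the conformal metric $\tilde g=v^{p-2}g$ from Proposition~\ref{zero:prop3} (where $R_{\tilde g}<0$ near $P$), transfers the global supersolution $u_1$ to $\tilde u_1:=vu_1$ via conformal invariance of $\Box_g$, performs the gluing in $\tilde g$, and only then transfers back by $u_+:=v\tilde u_+$. Without this detour you have no local solution to glue against in the non-locally-conformally-flat case.

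Second, you introduce a \emph{new} local solution $\psi$ on a strictly larger domain $\Omega'\supset\bar\Omega$ and speak of ``rescaling'' it to dominate $u_-$. Rescaling does not preserve the nonlinear equation $-a\Delta\psi=S\psi^{p-1}$ (if $\psi$ solves it, $c\psi$ solves the equation with $S$ replaced by $c^{2-p}S$), so this step does not do what you want. The paper sidesteps this entirely: it reuses the \emph{same} local solution $u_0$ on the \emph{same} domain $\Omega$ that already appeared in the construction of $u_-$ (Proposition~\ref{zero:prop3}). The gluing of \cite[Lemma~3.2]{XU6} is then applied inside $\Omega$, producing $\tilde u_2$ with $\tilde u_2\geqslant u_0$ in $\Omega$ and $\tilde u_2=\tilde u_1$ in a shell near $\partial\Omega$; no enlargement of the domain and no rescaling are needed, and the ordering $\tilde u_+\geqslant\tilde u_-$ is automatic. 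Your exterior annulus $\Omega'\setminus\bar\Omega$ and the associated interpolation between two unrelated functions are thus unnecessary complications that, as written, do not go through.
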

\begin{proof}
By Corollary \ref{zero:cor1}, we see that there exists a positive, smooth function $ u_{1} $ satisfying (\ref{zero:eqn9}). By conformal invariance of the conformal Laplacian in the strong sense (\ref{local:eqn23}), the same conformal change $ \tilde{g} = v^{p-2} g $ given in Proposition \ref{zero:prop3} above implies that
\begin{equation}\label{zero:eqn15}
\tilde{u}_{1} : = v u_{1} \Rightarrow -a\Delta_{\tilde{g}} \tilde{u}_{1} + R_{\tilde{g}} \tilde{u}_{1} \geqslant \left( S + \gamma_{0} \right) \tilde{u}_{1}^{p-1} \; {\rm in} \; M.
\end{equation}
This inequality holds since $ v > 0 $ on $ M $ and $ \eta_{1} = 0 $. Gluing functions $ \tilde{u}_{1} $ in (\ref{zero:eqn15}) and $ u_{0} $ in (\ref{zero:eqn11}) by exactly the same argument in \cite[Lemma~3.2]{XU6}, we conclude that there exists a positive, smooth function $ \tilde{u}_{2} \in \Omega $ such that
\begin{equation}\label{zero:eqn16}
\begin{split}
& -a\Delta_{\tilde{g}} \tilde{u}_{2} + R_{\tilde{g}} \tilde{u}_{2} \geqslant S \tilde{u}_{2}^{p-1} \; {\rm in} \; \Omega; \\
& \tilde{u}_{2} \geqslant u_{0} \; {\rm in} \; \Omega, \tilde{u}_{2} = \tilde{u}_{1} \; {\rm in} \; \Omega_{0} : = \lbrace x \in \Omega : d(x, \partial \Omega) < \xi \; \text{for some $ \xi > 0 $} \rbrace.
\end{split}
\end{equation}
For the full detail the gluing procedure we refer to \cite[Thm.~4.4]{XU3}. We point out that we need the extra $ \gamma_{0} \tilde{u}_{1}^{p-1} $ term in (\ref{zero:eqn15}) to give us some room to compensate the negative terms on $ \partial \Omega $. Precisely speaking, the term $ \beta $ in formula \cite[(49)]{XU6} is given by $ \beta = \max_{\Omega} \gamma_{0} \tilde{u}_{1}^{p-1} $ here.
Define
\begin{equation}\label{zero:eqn17}
\tilde{u}_{+} : = \begin{cases} \tilde{u}_{2}, & \; {\rm in} \; \Omega \\ \tilde{u}_{1}, & \; {\rm in} \; M \backslash \bar{\Omega} \end{cases}.
\end{equation}
It is straightforward to check that $ \tilde{u}_{+} $ is a positive, smooth function on $ M $ and satisfies
\begin{equation}\label{zero:eqn18}
-a\Delta_{\tilde{g}} \tilde{u}_{+} + R_{\tilde{g}} \tilde{u}_{+} \geqslant S \tilde{u}_{+}^{p-1} \; {\rm in} \; M.
\end{equation}
In addition, $ \tilde{u}_{+} \geqslant \tilde{u}_{-} \geqslant 0 $ on $ M $. Define
\begin{equation}\label{zero:eqn19}
u_{+} : = v\tilde{u}_{+},
\end{equation}
The conformal invariance of the conformal Laplacian indicates that (\ref{zero:eqn14}) holds, provided that $ \eta_{1} = 0 $, or equivalently, $ g $ is scalar-flat. Compare the definitions of $ u_{-} $ and $ u_{+} $ in (\ref{zero:eqn13}) and (\ref{zero:eqn19}) respectively, we conclude that
\begin{equation*}
u_{+} \geqslant u_{-} \geqslant 0 \; \text{pointwise in $ M $}.
\end{equation*}
\end{proof}
\medskip

Now we can show the necessary and sufficient condition of prescribed scalar curvature problem within a pointwise conformal class of metrics $ [g] $ on the closed manifolds $ (M, g) $, $ n = \dim M \geqslant 3 $, provided that $ \eta_{1} = 0 $.
\begin{theorem}\label{zero:thm1}
Let $ (M, g)) $ be a closed manifold, $ n = \dim M \geqslant 3 $. Let $ S \in \calC^{\infty}(M) $ be a given function. Assume that $ \eta_{1} = 0 $. The function $ S $ can be realized as the prescribed scalar curvature of some conformal metric $ \tilde{g} \in [g] $ if and only if
\begin{enumerate}[(i).]
\item $ S \equiv 0 $ on $ M $;
\item $ S $ changes sign, and $ \int_{M} S \dvol < 0 $.
\end{enumerate}
\end{theorem}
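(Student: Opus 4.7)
The plan is to prove the two directions separately, treating the sufficiency as a clean assembly of the machinery already built (Propositions \ref{zero:prop3} and \ref{zero:prop4} plus the monotone iteration scheme in Theorem \ref{pre:thm3}), and the necessity by an integration argument based on Lemma \ref{zero:lemma1}. Since $\eta_{1}=0$, the resolution of the Yamabe problem lets me pick a scalar-flat representative in $[g]$ without loss of generality, so throughout I may assume $R_{g}\equiv 0$. Under this reduction, finding a $\tilde g = u^{p-2}g\in [g]$ with $R_{\tilde g}=S$ is equivalent to solving $-a\Delta_{g}u = S u^{p-1}$ with $u>0$, $u\in\calC^{\infty}(M)$.

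For sufficiency, if $S\equiv 0$ then $g$ itself already has scalar curvature $S$ and there is nothing to prove. If $S$ changes sign and $\int_{M}S\,\dvol<0$, Proposition \ref{zero:prop3} supplies a nonnegative, not identically zero $u_{-}\in \calC^{0}(M)\cap H^{1}(M,g)$ with $-a\Delta_{g}u_{-}\leqslant Su_{-}^{p-1}$ weakly, and Proposition \ref{zero:prop4} supplies a positive smooth $u_{+}\geqslant u_{-}$ with $-a\Delta_{g}u_{+}\geqslant Su_{+}^{p-1}$. Applying Theorem \ref{pre:thm3} with $h\equiv 0$, $H=S$, and $m=p-1$ produces $u\in W^{2,q}(M,g)$, hence $\calC^{\infty}(M)$ by bootstrap, solving $-a\Delta_{g}u = S u^{p-1}$. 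Positivity follows from $u\geqslant u_{-}\not\equiv 0$ and the strong maximum principle, exactly as at the end of the proof of Theorem \ref{pre:thm4}. The conformal metric $\tilde g = u^{p-2}g$ realises $S$ as its scalar curvature.

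For necessity, assume some positive smooth $u$ satisfies $-a\Delta_{g}u = S u^{p-1}$. Integrating against the constant $1$ and using the divergence theorem on the closed manifold $M$ gives
\begin{equation*}
\int_{M} S u^{p-1}\,\dvol \;=\; 0.
\end{equation*}
Since $u^{p-1}>0$ pointwise, this rules out $S$ having a definite sign unless $S\equiv 0$, which establishes the dichotomy. Now assume $S\not\equiv 0$, so $S$ changes sign. The equality case of Lemma \ref{zero:lemma1}, applied to $v:=u^{2-p}>0$, recasts the equation as
\begin{equation*}
-a\Delta_{g}v + \frac{(p-1)a}{p-2}\,\frac{|\nabla_{g}v|^{2}}{v} \;=\; (2-p)S \quad\text{in } M.
\end{equation*}
Integrating over $M$ annihilates the Laplacian, leaving
\begin{equation*}
\frac{(p-1)a}{p-2}\int_{M}\frac{|\nabla_{g}v|^{2}}{v}\,\dvol \;=\; (2-p)\int_{M}S\,\dvol.
\end{equation*}
Because $n\geqslant 3$ forces $p>2$, the left-hand side is nonnegative and $2-p<0$, so $\int_{M}S\,\dvol\leqslant 0$. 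Equality would force $\nabla_{g}v\equiv 0$, so $v$ and hence $u$ would be constant; then $-a\Delta_{g}u=0=Su^{p-1}$ with $u>0$ would force $S\equiv 0$, contradicting the standing assumption. Therefore $\int_{M}S\,\dvol<0$ strictly, completing necessity.

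The main obstacle in the overall program was discharged upstream, in Proposition \ref{zero:prop4}: producing a smooth global upper solution by first deforming conformally into a region with $R_{\tilde g}<0$ where Proposition \ref{local:prop2} or \ref{local:prop3} yields a local Yamabe solution, then gluing that local solution to the global super-solution of Corollary \ref{zero:cor1}, where the slack constant $\gamma_{0}>0$ is precisely what compensates the negative boundary contribution during the gluing. With those tools in hand, the proof of Theorem \ref{zero:thm1} itself is the short assembly above plus the one-line integration argument.
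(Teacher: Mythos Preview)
Your proof is correct and follows essentially the same approach as the paper: the sufficiency direction is the identical assembly of Propositions \ref{zero:prop3}, \ref{zero:prop4}, and Theorem \ref{pre:thm3}, and the necessity direction is the same integration argument. The only cosmetic difference is that for the strict inequality $\int_{M}S\,\dvol<0$ you route through the substitution $v=u^{2-p}$ of Lemma \ref{zero:lemma1}, whereas the paper multiplies the equation directly by $u^{1-p}$ and integrates by parts---these are the same computation, since $\frac{|\nabla_{g}v|^{2}}{v}=(p-2)^{2}u^{-p}|\nabla_{g}u|^{2}$.
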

\begin{proof} Since $ \eta_{1} = 0 $, we may assume that $ g $ is scalar-flat since otherwise we can arrange a conformal change to get this in advance. Then this problem is reduced to the existence of some positive solution $ u \in \calC^{\infty}(M) $ such that
\begin{equation}\label{zero:eqn20}
-a\Delta_{g} u = S u^{p-1} \; {\rm in} \; M.
\end{equation}
We consider the sufficient condition first. When $ S \equiv 0 $, the problem is trivial by taking $ u \equiv 1 $. When $ S $ satisfies (ii), we apply Proposition \ref{zero:prop3} and Proposition \ref{zero:prop4} to construct lower solution $ u_{-} $ and upper solution $ u_{+} $. Due to the results of these two propositions, all hypothesis in the monotone iteration scheme Theorem \ref{pre:thm3} hold. Applying Theorem \ref{pre:thm3}, we get the desired solution $ u $ of (\ref{zero:eqn20}).

We now consider the necessary condition. In this case, we know that (\ref{zero:eqn20}) has a positive, smooth solution. We may assume that $ S \not\equiv 0 $ since otherwise we do not need to do anything. Integrating both sides of (\ref{zero:eqn20}),
\begin{equation*}
\int_{M} -a\Delta_{g} u \dvol = \int_{M} S u^{p-1} \dvol \Rightarrow \int_{M} S u^{p-1} \dvol = 0.
\end{equation*}
Since $ u > 0 $ on $ M $, it follows that $ S $ must changes sign. Multiple $ u^{1 - p} $ on both sides of (\ref{zero:eqn20}) then integrating both sides, we have
\begin{equation*}
\int_{M} S \dvol = \int_{M} -a\Delta_{g} u \cdot u^{1 - p} \dvol = a(1 - p) \int_{M} \lvert \nabla_{g} u \rvert^{2} \cdot u^{-p} \dvol < 0.
\end{equation*}
The last inequality is due to the facts that $ u > 0 $ everywhere on $ M $ and $ (1 - p) = 1 - \frac{2n}{n - 2} < 0 $.
\end{proof}
\begin{remark}\label{zero:re2}
We can explain analytically why $ S $ must change sign. From the construction of the lower and upper solutions by our method, we observe that $ S > 0 $ somewhere in $ M $ is essential since otherwise we cannot get a nontrivial solution of the local Yamabe equation (\ref{local:eqn11}), whether the manifold is locally conformally flat nor not.
\end{remark}
\medskip

We now consider the prescribed scalar curvature problem for pointwise conformal metric on compact manifolds $ (\bar{M}, g) $ with non-empty smooth boundary, $ n = \dim M \geqslant 3 $. in our previous paper that solves the Escobar problem \cite{XU4}, we realized that the local to global method applied on closed manifolds can be transplanted to compact manifolds with non-empty boundary. We follow the same idea, and give the necessary and sufficient condition of prescribed scalar curvature problem within a pointwise conformal class $ [g] $ of the compact manifolds $ (\bar{M}, g) $ with non-empty smooth boundary $ \partial M $, $ n = \dim \bar{M} \geqslant 3 $, provided that $ \eta_{1}' = 0 $. Due to the result of the Escobar problem, the model case is a metric $ g $ which is scalar-flat and zero mean curvature.
\begin{theorem}\label{zero:thm2}
Let $ (\bar{M}, g) $ be a compact manifold with non-empty smooth boundary $ \partial M $, $ n = \dim M \geqslant 3 $. Let $ S \in \calC^{\infty}(M) $ be a given function. Assume that $ \eta_{1}' = 0 $. The function $ S $ can be realized as the prescribed scalar curvature of some conformal metric $ \tilde{g} \in [g] $ with $ h_{\tilde{g}} = 0 $ if and only if
\begin{enumerate}[(i).]
\item $ S \equiv 0 $ on $ M $;
\item $ S $ changes sign, and $ \int_{M} S \dvol < 0 $.
\end{enumerate}
\end{theorem}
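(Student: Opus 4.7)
The plan is to mirror the structure of Theorem \ref{zero:thm1}, exploiting that the local-to-global toolkit of \S3 and \S4 is insensitive to whether one has a boundary, provided the boundary data is carefully handled. As in the closed case, the Escobar problem lets us assume that $g$ is scalar-flat with $h_g \equiv 0$, so the PDE to solve becomes
\begin{equation*}
-a\Delta_g u = S u^{p-1} \;{\rm in}\; M, \qquad \frac{\partial u}{\partial \nu} = 0 \;{\rm on}\; \partial M,
\end{equation*}
and what must be produced is a positive $u \in \calC^{\infty}(\bar M)$.

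For the sufficient direction in case (ii), I would construct a lower solution by the same strategy as Proposition \ref{zero:prop3}: pick a point $P$ in the open set $\{S > 0\}$ lying in the \emph{interior} $M$ (which exists by continuity), use the boundary analog of Proposition \ref{zero:prop2} (or just its closed-manifold form, applied after extending across $\partial M$ or working entirely in an interior ball) to produce a conformal change $\tilde g = v^{p-2} g$ with $R_{\tilde g} < 0$ near $P$, solve the local Dirichlet problem \eqref{zero:eqn11} on a small interior ball $\Omega \Subset M$ via Proposition \ref{local:prop2} or \ref{local:prop3}, extend by zero, and convert back via the pointwise multiplication $u_- = v \tilde u_-$. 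Since $\Omega$ is strictly interior, $u_- \equiv 0$ in a neighborhood of $\partial M$, so $\partial u_-/\partial \nu = 0$ there and the boundary part of the sub-solution condition in Theorem \ref{pre:thm4} (with $h = 0$, $H = 0$, $\theta_1 = 0$) is automatic.

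For the upper solution I would adapt the argument of Propositions \ref{zero:prop1} and \ref{zero:prop4} verbatim, with one change: the auxiliary PDE \eqref{zero:eqn7} is now solved with homogeneous Neumann data,
\begin{equation*}
-a\Delta_g v_0 = (2-p) S - \gamma \;{\rm in}\; M, \qquad \frac{\partial v_0}{\partial \nu} = 0 \;{\rm on}\; \partial M,
\end{equation*}
where $\gamma = \tfrac{2-p}{\mathrm{Vol}_g(M)}\int_M S \dvol$. The solvability condition $\int_M ((2-p)S - \gamma)\dvol = 0$ holds by construction, so a smooth solution exists; adding a large constant and proceeding as in Proposition \ref{zero:prop1} produces a smooth positive $u_1$ with $-a\Delta_g u_1 \geqslant (S+\gamma_0) u_1^{p-1}$ and, crucially, $\partial u_1/\partial \nu = 0$ on $\partial M$ (because Neumann data is preserved under the substitution $u_1 = v^{1/(2-p)}$). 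Since the gluing of Proposition \ref{zero:prop4} is carried out inside the same interior $\Omega$ used for the lower solution, it leaves $u_1$ untouched near $\partial M$, and the resulting $u_+$ still satisfies $\partial u_+/\partial \nu = 0$. Applying Theorem \ref{pre:thm4} with $S$ as given and $H \equiv 0$ (so $\theta_1 = \theta_2 = 0$ is admissible and the smallness of $\sup |H|$ is trivial) yields a positive smooth solution $u$ with $u_- \leq u \leq u_+$.

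The necessary direction is a quick integration-by-parts, using the Neumann condition to kill all boundary integrals. Integrating $-a\Delta_g u = S u^{p-1}$ over $M$ gives $0 = \int_M S u^{p-1}\dvol$, forcing $S$ to change sign whenever $S \not\equiv 0$. Multiplying by $u^{1-p}$ and integrating yields
\begin{equation*}
\int_M S \dvol = \int_M (-a\Delta_g u) u^{1-p} \dvol = a(1-p)\int_M |\nabla_g u|^2 u^{-p}\dvol < 0,
\end{equation*}
since the boundary term $\int_{\partial M} a\,(\partial u/\partial\nu) u^{1-p}\,dS_g$ vanishes. The main obstacle I anticipate is bookkeeping rather than substance: confirming that the Neumann condition is preserved through each step (the conformal change $\tilde g = v^{p-2} g$, the substitution $u = v^{1/(2-p)}$, and the local gluing) so that the hypotheses of Theorem \ref{pre:thm4} are honestly met, and verifying that Proposition \ref{zero:prop2} admits a boundary analog or can be replaced by working in strictly interior Riemannian balls.
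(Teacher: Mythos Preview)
Your proposal is correct and follows essentially the same route as the paper: reduce via the Escobar problem to scalar-flat $g$ with $h_g=0$, build the lower solution from a local Yamabe solution on an interior ball (so the Neumann condition is trivial), build the upper solution by solving the auxiliary linear PDE with homogeneous Neumann data and then gluing inside the same interior ball, and conclude by Theorem \ref{pre:thm4} with $H\equiv 0$; the necessary direction is the same two integrations. One small slip: to obtain $-a\Delta_g u_1 \geqslant (S+\gamma_0)u_1^{p-1}$ you must solve the Neumann problem with right-hand side $(2-p)(S+\gamma_0)-\gamma$ (the Corollary \ref{zero:cor1} version), not $(2-p)S-\gamma$ as written, but since you already invoke the $\gamma_0$ room for the gluing this is only a notational lapse.
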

\begin{proof}
Since $ \eta_{1}' = 0 $, we may assume that $ g $ is scalar-flat with minimal boundary since otherwise we can arrange a conformal change to get this in advance. Then this problem is reduced to the existence of some positive solution $ u \in \calC^{\infty}(M) $ such that
\begin{equation}\label{zero:eqn21}
-a\Delta_{g} u = S u^{p-1} \; {\rm in} \; M, \frac{\partial u}{\partial \nu} = 0 \; {\rm on} \; \partial M.
\end{equation}
We consider the sufficient condition first. The case $ S \equiv 0 $ is trivial. Assume that $ S $ satisfies (ii). We use the same function as the lower solution of (\ref{zero:eqn21}) in Proposition \ref{zero:prop3} in some interior open region $ \Omega \subset \bar{\Omega} \subset M $. The only extra thing we have to check is the boundary condition. We mention that under any conformal change $ g_{2} = \Phi^{p-2} g_{1} $, the boundary condition satisfies
\begin{equation*}
\frac{\partial \left( \Phi^{-1} u\right)}{\partial \nu_{g_{2}}} = \Phi^{-\frac{p}{2}} \frac{\partial u}{\partial \nu_{g_{1}}} = 0.
\end{equation*}
Thus we have Neumann condition for every metric in the same conformal class. It is straightforward to check the boundary condition now, since $ u_{-} \equiv 0 $ near the boundary $ \partial M $. Note that $ u_{-} \in \calC^{0}(\bar{M}) \cap H^{1}(M, g) $, nonnegative and not identically zero, such that
\begin{equation*}
-a\Delta_{g} u_{-} \leqslant S u_{-}^{p-1} \; {\rm in} \; \partial M, \frac{\partial u_{-}}{\partial \nu} = 0 \; {\rm on} \; \partial M
\end{equation*}
holds in the weak sense.

For upper solution, we have to adjust result in Corollary \ref{zero:cor1} to get a good ``almost" upper solution except within the region $ \Omega $. Since $ \int_{M} S \dvol < 0 $, we can take some $ \gamma_{0} > 0 $ so that
\begin{equation*}
\gamma : = \frac{1}{\text{Vol}_{g}(M)} \int_{M} \left( S + \gamma_{0} \right) < 0.
\end{equation*}
By standard elliptic theory, the PDE
\begin{equation*}
-a\Delta_{g} v_{0} = ( 2- p) \left( S + \gamma_{0} \right) - (2 - p) \gamma \; {\rm in} \; M, \frac{\partial v_{0}}{\partial \nu} = 0 \; {\rm on} \; M
\end{equation*}
has a solution $ v_{0} \in \calC^{\infty}(\bar{M}) $. We take $ C > 0 $ large enough, so that
\begin{equation*}
v : = v_{0} + C > 0 \; {\rm on} \; \partial M, \frac{a(p - 1)}{p - 2}\frac{\lvert \nabla_{g} v \rvert^{2}}{v} < (2 - p) \gamma.
\end{equation*}
It follows that the positive function $ v \in \calC^{\infty}(\bar{M}) $ satisfies
\begin{equation*}
-a\Delta_{g} v + \frac{a(p - 1)}{p - 2}\frac{\lvert \nabla_{g} v \rvert^{2}}{v} \leqslant (2 - p) \left( S + \gamma_{0} \right) \; {\rm in} \; M, \frac{\partial v}{\partial \nu} = 0 \; {\rm on} \; \partial M.
\end{equation*}
Set $ u = v^{\frac{1}{2 - p}} $, we conclude that $ u > 0 $ satisfies
\begin{equation}\label{zero:eqn22}
-a\Delta_{g} u \geqslant \left( S + \gamma_{0} \right) u^{p-1} \; {\rm in} \; M, \frac{\partial u}{\partial \nu} = 0 \; {\rm on} \; \partial M.
\end{equation}
The argument for (\ref{zero:eqn22}) follows from Proposition \ref{zero:prop1} exactly. Then we apply the same argument in Proposition \ref{zero:prop4} to construct an upper solution $ u_{+} $ such that $ u_{+} \in \calC^{\infty}(\bar{M}) $ is positive with $ u_{+} \geqslant u_{-} \geqslant 0 $ pointwise on $ \bar{M} $, and
\begin{equation*}
-a\Delta_{g} u_{+} \geqslant S u_{+}^{p-1} \; {\rm in} \; M, \frac{\partial u_{+}}{\partial \nu} = 0 \; {\rm on} \; \partial M
\end{equation*}
holds strongly. Thus by the monotone iteration scheme in Theorem \ref{pre:thm4}, we conclude that there exists a positive function $ u \in \calC^{\infty}(\bar{M}) $ that solves (\ref{zero:eqn21}). Note that the regularity of $ u $ is due to \cite{Che}.

For necessary condition, the case $ S \equiv 0 $ is trivial. Assume that $ S \not\equiv 0 $, we integrate (\ref{zero:eqn21}) on both sides,
\begin{equation*}
\int_{M} S u^{p-1} \dvol = \int_{M} -a\Delta_{g} u \dvol = - a \int_{\partial M} \frac{\partial u}{\partial \nu} dS_{g} = 0.
\end{equation*}
This holds since $ u > 0 $ on $ \bar{M} $. Hence $ S $ must change sign. Multiply both sides of (\ref{zero:eqn21}) by $ u^{1 - p} $ and integrate, Neumann condition implies that
\begin{equation*}
\int_{M} S \dvol = \int_{M} - a\Delta_{g} u \cdot u^{1 - p} \dvol = a (1 - p) \int_{M} \lvert \nabla_{g} u \rvert^{2} u^{-p} \dvol < 0.
\end{equation*}
\end{proof}
\medskip

\section{Prescribing Non-Constant Scalar and Mean Curvature Functions with Zero First Eigenvalue}
In \S4, we have shown in Theorem \ref{zero:thm2} the necessary and sufficient condition for prescribed scalar curvature problem with minimal boundary on compact manifolds $ (\bar{M}, g) $ with non-empty smooth boundary, $ n = \dim \bar{M} \geqslant 3 $, provided that the first eigenvalue $ \eta_{1}' $ of the conformal Laplacian is zero. Any prescribed scalar curvature functions $ S $ must satisfy $ \int_{M} S \dvol < 0 $ and change sign, or identically zero. In this section, we generalize our results to the non-trivial mean curvature scenario, provided that $ \eta_{1}' = 0 $ still. We will discuss one general case in principle:
\medskip

{\it{Given functions $ S, H \in \calC^{\infty}(\bar{M}) $ such that $ S > 0 $ somewhere and $ H \not\equiv 0 $ on $ \partial M $, when $ S $ and $ H \bigg|_{\partial M} $ can be realized as prescribed scalar and mean curvature functions, respectively}}.
\medskip

Due to the Escobar problem, we may assume $ R_{g} = h_{g} = 0 $ for our initial metric $ g $ for the whole section. Thus the problem is again reduced to the existence of some positive, smooth solution of the following PDE
\begin{equation}\label{zerog:eqn1}
-a\Delta_{g} u = Su^{p-1} \; {\rm in} \; M, \frac{\partial u}{\partial \nu}  = \frac{2}{p-2} H u^{\frac{p}{2}} \; {\rm on} \; \partial M.
\end{equation}
The question above contains the cases like both $ S $ and $ H $ change sign, or $ S > 0 $ everywhere and $ H < 0 $ everywhere, etc. Although the general case is much more flexible than the minimal boundary case, there still have some restrictions on the choices of $ S $ and $ H $ in terms of their relations. 

Let's consider the Riemann surfaces as the inspiration, as the problem of prescribed scalar and mean curvature functions is a high dimensional generalization of the prescribing Gauss and geodesic curvature functions on Riemann surface $ (\bar{N}, g) $ with boundary $ \partial N $, provided that the Euler characteristic $ \chi(\bar{N}) = 0 $. In 2-dimensional case, Gauss-Bonnet Theorem says
\begin{equation}\label{zerog:eqn2}
\int_{N} K_{g} \dvol + \int_{\partial N} \sigma_{g} dA_{g} = 2\pi \chi(\bar{N}).
\end{equation}
Here $ K_{g} $ is the Gauss curvature, $ \sigma_{g} $ is the geodesic curvature, $ dA_{g} $ is the volume form on $ \partial N $. When $ \chi(\bar{N}) = 0 $, it follows from (\ref{zerog:eqn2}) that
\begin{equation}\label{zerog:eqn3}
\int_{N} K_{g} \dvol  = - \int_{\partial N} \sigma_{g} dA_{g}.
\end{equation}
It is worthwhile to show the fact that the Euler characteristic for Riemann surfaces $ (\bar{N}, g) $ is invariant under conformal change. As an example, we show the case when $ \chi(\bar{N}) < 0 $, the same argument apply equally for the other two cases. By the uniformization theorem, we may assume that $ K_{g} = -1 $ and $ \sigma_{g} = 0 $, provided that $ \chi(\bar{N}) < 0 $. Furthermore, by Gauss-Bonnet
\begin{equation*}
- \text{Vol}_{g}(\bar{N}) = \int_{N} K_{g} \dvol = 2\pi \chi(\bar{N}).
\end{equation*}
Under the pointwise conformal change $ \tilde{g} = e^{2f} g $ with Gauss and geodesic curvatures $ K_{\tilde{g}} $ and $ \sigma_{\tilde{g}} $ with respect to $ \tilde{g} $, the following PDE holds:
\begin{equation}\label{zerog:eqn4}
-\Delta_{g} f - 1 = K_{\tilde{g}} e^{2f}  \; {\rm in} \; N, \frac{\partial f}{\partial \nu} = \sigma_{\tilde{g}} e^{f} \; {\rm on} \; \partial N.
\end{equation}
Integrating both sides with the application of (\ref{zerog:eqn3}),
\begin{align*}
& \int_{N} K_{\tilde{g}} e^{2f} \dvol = \int_{N} -\Delta_{g} f \dvol + 2\pi \chi(\bar{N}) = \int_{\partial N} - \frac{\partial f}{\partial \nu} dA_{g} + 2\pi \chi(\bar{N}) \\
\Rightarrow & \int_{N} K_{\tilde{g}} e^{2f} \dvol = -\int_{\partial N} \sigma_{\tilde{g}} e^{f} dA_{g} + 2\pi \chi(\bar{N}) \Rightarrow \int_{N} K_{\tilde{g}} d\text{Vol}_{\tilde{g}} = - \int_{\partial N} \sigma_{\tilde{g}} dA_{\tilde{g}}  + 2\pi \chi(\bar{N}) \\
\Rightarrow & 2 \pi \chi(\bar{N}) = \int_{N} K_{\tilde{g}} d\text{Vol}_{\tilde{g}} + \int_{\partial N} \sigma_{\tilde{g}} dA_{\tilde{g}}.
\end{align*}
Thus the Euler characteristic is invariant under conformal change, so is the Gauss-Bonnet theorem.
\medskip

Roughly speaking, (\ref{zerog:eqn3}) indicates that in general the sign of $ K_{g} $ should be opposite to the sign of $ \sigma_{g} $. More precisely, the sign of the average of $ K_{g} $ should be opposite to the sign of the average of $ \sigma_{g} $, or both are identically zero. We are looking for the analogy of (\ref{zerog:eqn3}) for manifolds with dimensions at least 3, provided that $ \eta_{1}' = 0 $. Since the relation (\ref{zerog:eqn3}) is obtained between the conformal change for which the Euler characteristics and the Gauss-Bonnet are invariant, it is natural to consider this type of relations under conformal change. However, due to the lack of the Gauss-Bonnet theorem in high dimensional case, we can only have an inequality as a necessary condition for prescribing scalar and mean curvature functions for some Yamabe metric, provided that $ \eta_{1}' = 0 $.

Recall that the model case for $ \eta_{1}' = 0 $ is $ R_{g} = 0 $ and $ h_{g} = 0 $. Assume that there exists a Yamabe metric $ \tilde{g} = u^{p-2} g $ with $ R_{\tilde{g}} = S $ and $ h_{\tilde{g}} = H $. Then the function $ u > 0 $ satisfies (\ref{zerog:eqn1}). In addition, we have the following relation under conformal change,
\begin{equation}\label{zerog:eqn5}
d\text{Vol}_{\tilde{g}} = u^{p} \dvol, dS_{\tilde{g}} = u^{\frac{p+2}{2}} dS_{g}.
\end{equation}
Pairing (\ref{zerog:eqn1}) by $ u $, integrating on both sides and using (\ref{zerog:eqn5}),
\begin{align*}
& \int_{M} S u^{p} \dvol = a\int_{M} - \Delta_{g}u \cdot u \dvol = -a \int_{\partial M} \frac{\partial u}{\partial \nu} \cdot u dS_{g} + a \int_{M} \lvert \nabla_{g} u \rvert^{2} \dvol \\
\Rightarrow & \int_{M} Su^{p} \dvol = -a \cdot \frac{2}{p-2} \int_{\partial M} H u^{\frac{p+2}{2}} dS_{g} + a \int_{M} \lvert \nabla_{g} u \rvert^{2} \dvol \\
\Rightarrow & \int_{M} S d\text{Vol}_{\tilde{g}} = -2(n - 1) \int_{\partial M} H dS_{\tilde{g}} +  a \int_{M} \lvert \nabla_{g} u \rvert^{2} \dvol \geqslant -2(n - 1) \int_{\partial M} H dS_{\tilde{g}}.
\end{align*}
It is straightforward to check that the model case for $ \eta_{1} = 0 $ satisfies the inequality above. In general, we conclude that
\begin{proposition}\label{zerog:prop1}
Let $ (\bar{M}, g) $ be a compact manifold with smooth boundary $ \partial M $, $ n = \dim \bar{M} \geqslant 3 $. If $ \eta_{1}' = 0 $, then a necessary condition of scalar curvature $ R_{g} $ and mean curvature $ h_{g} $ is
\begin{equation}\label{zerog:eqn6}
\int_{M} R_{g} \dvol \geqslant -2(n - 1) \int_{\partial M} h_{g} dS_{g}.
\end{equation} 
\end{proposition}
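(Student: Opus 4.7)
The plan is to obtain the inequality as a direct integration-by-parts identity on a distinguished representative of the conformal class $[g]$. Since $\eta_1' = 0$, the boundary Yamabe problem (the Escobar problem solved by the author in \cite{XU4}, extended in \cite{XU5}) produces a metric $g_0 \in [g]$ with $R_{g_0} = 0$ and $h_{g_0} = 0$; equivalently we may write $g = u^{p-2} g_0$ for some positive smooth $u \in \calC^{\infty}(\bar{M})$.

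With this choice, the conformal transformation laws for scalar and mean curvature collapse to
\begin{equation*}
-a\Delta_{g_0} u = R_g \, u^{p-1} \; {\rm in} \; M, \quad \frac{\partial u}{\partial \nu_{g_0}} = \frac{2}{p-2} h_g \, u^{p/2} \; {\rm on} \; \partial M.
\end{equation*}
Pairing the interior equation against $u$ with respect to $d\text{Vol}_{g_0}$ and applying Green's identity yields
\begin{equation*}
\int_M R_g \, u^p \, d\text{Vol}_{g_0} = a \int_M \lvert \nabla_{g_0} u \rvert^2 \, d\text{Vol}_{g_0} - \frac{2a}{p-2} \int_{\partial M} h_g \, u^{p/2+1} \, dS_{g_0},
\end{equation*}
where the boundary term has been substituted from the Neumann-type condition above.

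The next step is to transfer both integrals to the metric $g$ using $d\text{Vol}_g = u^p d\text{Vol}_{g_0}$ and $dS_g = u^{p/2+1} dS_{g_0}$; the latter holds because $\tfrac{p}{2}+1 = \tfrac{2(n-1)}{n-2}$ is exactly the conformal weight of the induced $(n-1)$-dimensional area form under $g = u^{p-2} g_0$. Together with the elementary identity $\tfrac{2a}{p-2} = 2(n-1)$, the identity rewrites as
\begin{equation*}
\int_M R_g \, d\text{Vol}_g + 2(n-1) \int_{\partial M} h_g \, dS_g = a \int_M \lvert \nabla_{g_0} u \rvert^2 \, d\text{Vol}_{g_0} \geqslant 0,
\end{equation*}
which is precisely (\ref{zerog:eqn6}). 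Equality would force $u$ to be constant, making $g$ itself scalar-flat with minimal boundary.

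The only non-mechanical ingredient is the existence of a scalar-flat, mean-flat representative in $[g]$ under $\eta_1' = 0$, which is guaranteed by the author's prior solutions of the Escobar and Han--Li problems. Everything else is a single integration by parts and the standard conformal scaling of the volume and area forms, so I do not anticipate any substantial obstacle in writing out the proof in full.
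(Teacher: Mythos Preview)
Your proof is correct and follows essentially the same route as the paper's primary argument: pass to a scalar-flat, mean-flat representative $g_{0}\in[g]$ (available since $\eta_{1}'=0$), write $g=u^{p-2}g_{0}$, pair the resulting Yamabe equation against $u$, and convert the weighted integrals back to $g$-volume and $g$-area via $d\text{Vol}_{g}=u^{p}\,d\text{Vol}_{g_{0}}$ and $dS_{g}=u^{(p+2)/2}\,dS_{g_{0}}$. The paper also mentions an alternative via pairing the first eigenfunction against the eigenvalue problem, but your derivation matches the main proof essentially line for line.
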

\begin{proof}
One proof is given above. An alternative way to see this is to pair the eigenfunction $ \varphi $ to both sides of the eigenvalue problem
\begin{equation*}
-a\Delta_{g} \varphi + R_{g} \varphi = 0 \; {\rm in} \; M, \frac{\partial \varphi}{\partial \nu} + \frac{2}{p-2} h_{g} \varphi = 0 \; {\rm on} \; \partial M
\end{equation*}
and integrate.
\end{proof}
Analogous to the Riemann surface's case, the necessary condition (\ref{zerog:eqn6}) indicates somehow that the sign of $ R_{g} $ should be opposite to the sign of $ h_{g} $. It is clear when $ R_{g} < 0 $. When $ R_{g} > 0 $, we integrate (\ref{zerog:eqn1}),
\begin{equation*}
\int_{M} S u^{p-1} \dvol = -a\int_{M} \Delta_{g} u \dvol = -a \cdot \frac{2}{p-2} \int_{\partial M} H u^{\frac{p}{2}} dS_{g} = -2(n - 1) \int_{\partial M} H u^{\frac{p}{2}} dS_{g}.
\end{equation*}
Roughly speaking, when $ R_{g} > 0 $, then $ h_{g} $ must be negative somewhere. We also mention that (\ref{zerog:eqn6}) is a Kazdan-Warner type restriction, but it is just a one side control; in addition, this restriction involves the choice of the conformal factor so it is different from the restriction we given in \S4. We will give analytic conditions for the functions $ S, H $, both pointwise and in average with respect to the initial metric $ g $.
\medskip

We are using the monotone iteration scheme again. As in Lemma \ref{zero:lemma1}, we convert the upper solution of (\ref{zerog:eqn1}) for given functions $ S, H $ into another PDE-type inequalities. 
\begin{lemma}\label{zerog:lemma1}
Let $ (\bar{M}, g) $ be a compact manifold with non-empty smooth boundary $ \partial M $, $ n \geqslant 3 $. Let $ S, H \in \calC^{\infty}(\bar{M}) $ be given functions. Then there exists some positive function $ u \in \calC^{\infty}(\bar{M}) $ satisfying
\begin{equation}\label{zerog:eqn7}
-a\Delta_{g} u \geqslant S u^{p-1} \; {\rm in} \; M, \frac{\partial u}{\partial \nu} \geqslant \frac{2}{p -2 } H u^{\frac{p}{2}} \; {\rm on} \; \partial M
\end{equation}
if and only if there exists some positive function $ w \in \calC^{\infty}(\bar{M}) $ satisfying
\begin{equation}\label{zerog:eqn8}
-a\Delta_{g} w + \frac{(p - 1)a}{p - 2} \cdot \frac{\lvert \nabla_{g} w \rvert^{2}}{w} \leqslant (2 - p)S \; {\rm in} \; M, \frac{\partial w}{\partial \nu} \leqslant -2H w^{\frac{1}{2}}.
\end{equation}
Moreover, the equality in (\ref{zerog:eqn7}) holds if and only if the equality in (\ref{zerog:eqn8}) holds.
\end{lemma}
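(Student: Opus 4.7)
The plan is to use exactly the same substitution $w = u^{2-p}$ as in Lemma \ref{zero:lemma1}, with the inverse $u = w^{1/(2-p)}$ for the reverse direction. The interior computation is identical to what was already carried out, so I would reproduce it briefly and then concentrate on the new ingredient, namely the boundary condition.

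First I would assume a positive $u \in \calC^{\infty}(\bar{M})$ satisfying \eqref{zerog:eqn7}, define $w = u^{2-p}$, and differentiate. The interior calculation is word-for-word the one in Lemma \ref{zero:lemma1}: from $\Delta_g w = (2-p) u^{1-p} \Delta_g u + (2-p)(1-p) u^{-p} |\nabla_g u|^2$ combined with $\nabla u = (2-p)^{-1} u^{p-1} \nabla w$, one gets the interior half of \eqref{zerog:eqn8}. The key point for the boundary is
\begin{equation*}
\frac{\partial w}{\partial \nu} = (2-p) u^{1-p} \frac{\partial u}{\partial \nu}.
\end{equation*}
Multiplying the assumed boundary inequality $\frac{\partial u}{\partial \nu} \geqslant \frac{2}{p-2} H u^{p/2}$ by the factor $(2-p) u^{1-p}$, which is \emph{negative} because $2 - p = -\frac{4}{n-2} < 0$ and $u > 0$, reverses the sense of the inequality and yields
\begin{equation*}
\frac{\partial w}{\partial \nu} \leqslant (2-p) u^{1-p} \cdot \frac{2}{p-2} H u^{p/2} = -2 H u^{1 - p/2} = -2 H w^{1/2},
\end{equation*}
where in the last equality I use $u^{1-p/2} = u^{(2-p)/2} = w^{1/2}$. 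This gives the boundary half of \eqref{zerog:eqn8}.

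For the converse, I would start with $w \in \calC^{\infty}(\bar{M})$ positive satisfying \eqref{zerog:eqn8}, set $u = w^{1/(2-p)}$, which is smooth and positive, and run the same computation in reverse. The interior part is again the analogue of the corresponding step in Lemma \ref{zero:lemma1}; the boundary step uses $\frac{\partial u}{\partial \nu} = \frac{1}{2-p} w^{1/(2-p) - 1} \frac{\partial w}{\partial \nu}$ and the fact that $\frac{1}{2-p} w^{1/(2-p) - 1}$ is negative, so the inequality in \eqref{zerog:eqn8} flips back to recover the boundary inequality of \eqref{zerog:eqn7}. I do not anticipate a serious obstacle; the only delicate point is keeping track of the sign of $2-p$ and of $\frac{\partial w}{\partial \nu}$, so I would state explicitly at each step that $2-p < 0$ before multiplying across the inequality. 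The equality statement follows automatically since every step above is an equivalence provided the original inequality is an equality, exactly as at the end of the proof of Lemma \ref{zero:lemma1}.
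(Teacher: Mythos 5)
Your proposal is correct and follows essentially the same route as the paper: the substitution $w = u^{2-p}$, the interior computation carried over verbatim from Lemma \ref{zero:lemma1}, and the boundary inequality obtained by multiplying through by the negative factor coming from $2-p<0$ (the paper performs the identical algebra written from the $w$-side via $u = w^{1/(2-p)}$, which is the same calculation). The sign bookkeeping and the final identity $u^{1-p/2}=w^{1/2}$ are all accurate, and the equality case is handled the same way.
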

\begin{proof}
We assume (\ref{zerog:eqn7}) first. Denote
\begin{equation*}
w = u^{2 - p}.
\end{equation*}
The proof of the inequalities in the interior $ M $ is exactly the same as in Lemma \ref{zero:lemma1}. For the boundary condition, we observe that $ u = w^{\frac{1}{2 - p}} $ and $ p = \frac{2n}{n - 2} $, it follows that
\begin{align*}
& \frac{\partial u}{\partial \nu} \geqslant \frac{2}{p -2 } H u^{\frac{p}{2}} \Leftrightarrow \frac{1}{2 - p} w^{\frac{1}{2 - p} - 1} \frac{\partial w}{\partial \nu} \geqslant \frac{2}{p - 2} H w^{\frac{p}{2(2 - p)}} \\
\Leftrightarrow & -\frac{n - 2}{4} w^{-\frac{n}{4} - \frac{1}{2}} \frac{\partial w}{\partial \nu} \geqslant \frac{n-2}{2} H w^{-\frac{n}{4}} \Leftrightarrow \frac{\partial w}{\partial \nu} \leqslant -2H w^{\frac{1}{2}}.
\end{align*}
The equality holds if and only if all inequalities above are equalities. For the other direction, we assume (\ref{zerog:eqn8}). Denote
\begin{equation*}
u = w^{\frac{1}{2 - p}}.
\end{equation*}
We will obtain (\ref{zerog:eqn7}). We omit the details. The same argument applies for equalities.
\end{proof}
\medskip

We now consider the first case: $ S < 0 $ somewhere with $ \int_{M} S \dvol < 0 $, and $ H \not\equiv 0 $ on $ \partial M $ with $ \int_{\partial M} H dS_{g} > 0 $. By the relation (\ref{zerog:eqn6}), the condition $ \int_{\partial M} H dS_{g} > 0 $ provides the most flexibility for the choices of $ S $.
\begin{theorem}\label{zerog:thm1}
Let $ (\bar{M}, g) $ be a compact manifold with non-empty smooth boundary $ \partial M $, $ n \geqslant 3 $. Let $ S, H \in \calC^{\infty}(\bar{M}) $ be given functions. Assume $ \eta_{1}' = 0 $. If the functions $ S, H $ satisfy
\begin{equation}\label{zerog:eqn9}
\begin{split}
& S \; \text{changes sign in $ M $}, \; \int_{M} S \dvol < 0; \\
& \int_{\partial M} H dS_{g} > 0,
\end{split}
\end{equation}
then there exists some pointwise conformal metric $ \tilde{g} \in [g] $ such that $ R_{\tilde{g}} = S $ and $ h_{\tilde{g}} = cH \bigg|_{\partial M} $ for some small enough constant $ c > 0 $. Equivalently, the PDE (\ref{zerog:eqn1}) has a positive, smooth solution for the given $ S, cH $ satisfying (\ref{zerog:eqn9}).
\end{theorem}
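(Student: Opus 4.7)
The plan is to apply the monotone iteration scheme of Theorem \ref{pre:thm4} to (\ref{zerog:eqn1}) with $cH$ in place of $H$ for a sufficiently small $c > 0$, after normalizing (by the Escobar problem, as is already assumed in the section) so that $R_g \equiv 0$ and $h_g \equiv 0$; the latter gives the constant mean-curvature hypothesis of Theorem \ref{pre:thm4} with $h = 0$. I need to produce a nontrivial lower solution $u_- \in \calC_0(\bar M) \cap H^1(M,g)$ and a positive upper solution $u_+ \in W^{2,q}(M,g) \cap \calC_0(\bar M)$ with $0 \leqslant u_- \leqslant u_+$, and then shrink $c$ to meet the smallness requirement on $\sup_{\bar M}|H|$ in the theorem.

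For the lower solution I would copy Proposition \ref{zero:prop3} verbatim: pick an interior point $P$ with $S(P) > 0$, use Proposition \ref{zero:prop2} to produce a conformal change $\tilde g = v^{p-2} g$ with $v$ constant in a neighborhood of $\partial M$ (so $h_{\tilde g} = 0$ there) and $R_{\tilde g} < 0$ near $P$, solve the local Yamabe-type equation on a small interior ball $\Omega \subset M$ around $P$ via Proposition \ref{local:prop2} or \ref{local:prop3} to obtain a positive local solution $u_0$, extend by zero to $\tilde u_-$, and set $u_- = v\tilde u_-$. Since $u_- \equiv 0$ in a neighborhood of $\partial M$, the boundary subsolution inequality of Theorem \ref{pre:thm4} is trivially satisfied with $\theta_1 = 0$, regardless of the sign of $H$ on $\partial M$.

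The heart of the argument is the upper solution. Choose $\gamma_0 > 0$ small enough that $\int_M (S + \gamma_0)\dvol < 0$ still and use Lemma \ref{zerog:lemma1} to convert the desired supersolution inequalities with the shifted pair $(S + \gamma_0, cH)$ into: find $w \in \calC^\infty(\bar M)$, $w > 0$, with
\begin{equation*}
-a\Delta_g w + \frac{(p-1)a}{p-2} \frac{\lvert \nabla_g w \rvert^2}{w} \leqslant (2-p)(S + \gamma_0) \text{ in } M, \qquad \frac{\partial w}{\partial \nu} \leqslant -2cH\sqrt{w} \text{ on } \partial M.
\end{equation*}
Pick positive constants $\gamma, \delta$ satisfying $\gamma\,\mathrm{Vol}_g(M) + a\delta\,\mathrm{Area}_g(\partial M) = (2-p)\int_M(S+\gamma_0)\dvol$, which is positive by hypothesis; solve the now compatible linear Neumann problem $-a\Delta_g w_0 = (2-p)(S+\gamma_0) - \gamma$, $\partial_\nu w_0 = -\delta$; and set $w = w_0 + C$ with $C$ large enough that $\frac{(p-1)a}{p-2}\frac{\lvert \nabla_g w_0 \rvert^2}{w_0 + C} \leqslant \gamma$ pointwise. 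For $c > 0$ small enough the boundary inequality $\delta \geqslant 2c H \sqrt{w_0+C}$ also holds on $\partial M$ (the only active constraint being where $H > 0$), so $u_1 := w^{1/(2-p)}$ is a smooth positive global ``almost upper solution'' in $g$. I would then transplant to $\tilde g$ via $\tilde u_1 = u_1/v$ and invoke the gluing of \cite[Lemma~3.2]{XU6} as in Proposition \ref{zero:prop4}: the slack $\gamma_0 \tilde u_1^{p-1}$ is precisely what allows $\tilde u_1$ on a collar inside $\Omega$ to be stitched to a function $\tilde u_2 \geqslant u_0$ on $\bar \Omega$ satisfying $-a\Delta_{\tilde g} \tilde u_2 + R_{\tilde g} \tilde u_2 \geqslant S\tilde u_2^{p-1}$. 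Defining $\tilde u_+$ piecewise and $u_+ = v \tilde u_+$ yields a positive smooth $u_+$ with $u_+ \geqslant u_-$ on $\bar M$ and $u_+ \equiv u_1$ near $\partial M$, so the boundary supersolution inequality for $u_+$ is exactly that of $u_1$ and can be written as $\partial_\nu u_+ \geqslant \theta_2 u_+ \geqslant \frac{2}{p-2} cH u_+^{p/2}$ for some $\theta_2 > 0$ (or $\theta_2 = 0$ if $H \leqslant 0$ on $\partial M$), again using that $c$ is small.

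The main obstacle will be the upper solution: it must solve a nontrivial Robin-type inequality with a possibly sign-indefinite $H$, respect the compatibility of the inhomogeneous Neumann problem (which is precisely where the hypothesis $(2-p)\int_M S \dvol > 0$ is used), leave enough slack to glue against the local Yamabe solution so that $u_+ \geqslant u_-$, and still admit a valid $\theta_2$ matching the boundary data near $\partial M$. The smallness of $c$ is the single parameter that simultaneously enforces all of these conditions and the smallness of $\sup_{\bar M}|H|$ required by Theorem \ref{pre:thm4}. With $u_\pm$ in hand the scheme yields $u \in \calC^\infty(M) \cap \calC^{1,\alpha}(\bar M)$ solving (\ref{zerog:eqn1}) with $cH$, and the trace theorem upgrades $u$ to be smooth up to $\partial M$, delivering the desired $\tilde g = u^{p-2}g$ with $R_{\tilde g} = S$ and $h_{\tilde g} = cH$.
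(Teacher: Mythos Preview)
Your proposal is correct and follows essentially the same route as the paper: normalize to $R_g=h_g=0$, build the lower solution from a local Yamabe solution on an interior ball after a conformal change making $R_{\tilde g}<0$ there, build the upper solution via Lemma~\ref{zerog:lemma1} by solving an inhomogeneous Neumann problem with strictly negative boundary flux and adding a large constant, glue using the $\gamma_0$--slack exactly as in Proposition~\ref{zero:prop4}, and then shrink $c$ so that both the boundary supersolution inequality and the smallness hypothesis of Theorem~\ref{pre:thm4} hold. Your treatment of the compatibility condition and of the constants $\theta_1,\theta_2$ is slightly more explicit than the paper's, and your direction for the conformal transfer ($\tilde u_1=u_1/v$) is the correct one.
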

\begin{proof}
By hypotheses, we pick a point $ P $ and a neighborhood $ O $ containing $ P $ such that $ S > 0 $ in $ O \subset \bar{O} \subset M $. According to the conformal invariance of the conformal Laplacian in (\ref{local:eqn23}), we apply some conformal change $ g_{0} = v^{p-2} g $ as in Proposition \ref{zero:prop2}, such that $ R_{g_{0}} < 0 $ in some open subset $ \Omega \subset O $. By the same argument in Proposition \ref{zero:prop3}, in which we used either Proposition \ref{local:prop2} or Proposition \ref{local:prop3},  the following PDE
\begin{equation}\label{zerog:eqn10a}
-a\Delta_{g_{0}} \tilde{u} + R_{g_{0}} \tilde{u} = S \tilde{u}^{p-1} \; {\rm in} \; \Omega, \tilde{u} = 0 \; {\rm on} \; \partial M
\end{equation}
has a positive solution $ \tilde{u} \in \calC^{\infty}(\Omega) \cap \calC^{0}(\bar{\Omega}) \cap H_{0}^{1}(\Omega, g_{0}) $ by shrinking $ \Omega $ further if necessary. Note that under any conformal change $ g_{2} = \Phi^{p-2} g_{1} $, the boundary condition satisfies
\begin{equation*}
B_{g_{2}} \left( \Phi^{-1} u \right) = \Phi^{-\frac{p}{2}} B_{g_{1}} u.
\end{equation*}
Thus the lower solution for the PDE
\begin{equation}\label{zerog:eqn10}
-a\Delta_{g_{0}} u + R_{g_{0}} u = S u^{p-1} \; {\rm in} \; M, \frac{\partial u}{\partial \nu_{g_{0}}} = cH u^{\frac{p}{2}} \; {\rm on} \; \partial M
\end{equation}
is given by
\begin{equation}\label{zerog:eqn11}
u_{-} : = \begin{cases} \tilde{u}, & \; {\rm in} \; \Omega \\ 0, & \; {\rm in} \; \bar{M} \backslash \bar{\Omega} \end{cases}
\end{equation}
in the weak sense. The interior part is the same as in Proposition \ref{zero:prop3} or see \cite[Thm.~4.4]{XU3}. The boundary condition is trivial to check since $ u_{-} \equiv 0 $ on a collar region of $ \partial M $. 

We now construct a good candidate of the upper solution. Since $ \int_{M} S \dvol < 0 $, we can choose two small enough constants $ \gamma, \gamma' > 0 $ such that $ \int_{M} (S + \gamma + \gamma' )\dvol < 0 $ still. Take the constant $ \gamma'' < 0 $ such that
\begin{equation*}
\frac{2- p}{a} \int_{M} (S + \gamma + \gamma' )\dvol = - \int_{\partial M} \gamma'' dS_{g}.
\end{equation*}
By the standard linear elliptic PDE theory, the equality above implies that the following PDE
\begin{equation*}
-a \Delta_{g} \phi = (2 - p) (S + \gamma + \gamma') \; {\rm in} \; M, \frac{\partial \phi}{\partial \nu} = \gamma'' \; {\rm on} \; \partial M
\end{equation*}
has a smooth solution $ \phi $. We define
\begin{equation*}
\tilde{\phi} = \phi + C
\end{equation*}
for large enough $ C $ such that
\begin{equation}\label{zerog:eqn12}
\tilde{\phi} > 0 \; {\rm on} \; \bar{M}, \frac{(p - 1)a}{p - 2} \frac{\lvert \nabla_{g} \tilde{\phi} \rvert^{2}}{\tilde{\phi}} + (2 - p) \gamma'' < 0 \; {\rm in} \; M.
\end{equation}
The condition (\ref{zerog:eqn12}) implies that
\begin{equation*}
-a\Delta_{g} \tilde{\phi} + \frac{(p - 1)a}{p - 2} \frac{\lvert \nabla_{g} \tilde{\phi} \rvert^{2}}{\tilde{\phi}}  < (2 - p) ( S + \gamma') \; {\rm in} \; M.
\end{equation*}
Fix this $ \tilde{\phi} $. We now choose the constant $ c > 0 $ small enough such that
\begin{equation*}
\frac{\partial \tilde{\phi}}{\partial \nu} = \frac{\partial \phi}{\partial \nu} = \gamma'' \leqslant -2cH \tilde{\phi}^{\frac{1}{2}}.
\end{equation*}
It can be done since $ \gamma'' < 0 $. By Lemma \ref{zerog:lemma1}, the function $ \tilde{\varphi} = \tilde{\phi}^{\frac{1}{2 - p}} $ satisfies
\begin{equation*}
-a\Delta_{g} \tilde{\varphi} \geqslant (S + \gamma') \tilde{\varphi}^{p - 1} \; {\rm in} \; M, \frac{\partial \tilde{\varphi}}{\partial \nu} \geqslant \frac{2}{p-2} cH \tilde{\varphi}^{\frac{p}{2}} \; {\rm on} \; \partial M.
\end{equation*}
Applying the conformal change $ g_{0} = v^{p-2} g $, the function $ \varphi : = v \cdot \tilde{\varphi} > 0 $ on $ \bar{M} $ satisfies
\begin{equation}\label{zerog:eqn13}
-a\Delta_{g_{0}} \varphi + R_{g_{0}} \varphi \geqslant (S + \gamma') \varphi^{p - 1} \; {\rm in} \; M, \frac{\partial \varphi}{\partial \nu} \geqslant \frac{2}{p-2} cH \varphi^{\frac{p}{2}} \; {\rm on} \; \partial M
\end{equation}
due to the conformal invariance of the conformal Laplacian as well as the boundary condition. 

By the same argument in Proposition \ref{zero:prop4}, we apply the solution of the local Yamabe solution in (\ref{zerog:eqn10a}) and the function $ \varphi $ in (\ref{zerog:eqn13}) to conclude that there exists a positive function $ u_{+} \in \calC^{\infty}(\bar{M}) $ such that
\begin{equation}\label{zerog:eqn14}
-a\Delta_{g_{0}} u_{+} + R_{g_{0}} u_{+} \geqslant (S + \gamma') u_{+}^{p - 1} \; {\rm in} \; M, \frac{\partial u_{+}}{\partial \nu} \geqslant \frac{2}{p-2} cH u_{+}^{\frac{p}{2}} \; {\rm on} \; \partial M.
\end{equation}
Furthermore, $ 0 \leqslant u_{-} \leqslant u_{+} $. Note that (\ref{zerog:eqn14}) holds for all smaller constants $ c > 0 $. We therefore shrink $ c $ if necessary so that the condition (\ref{pre:eqn17}) holds for the function $ cH $. We apply the monotone iteration scheme in Theorem \ref{pre:thm4} to conclude the existence of a positive, smooth solution $ u $ of (\ref{zero:eqn10}). Due to conformal invariance of the conformal Laplacian and the Robin boundary condition, the function $ v^{-1} u $ solves (\ref{zerog:eqn1}) for $ S $ and $ cH $. Note that the last step holds since $ \eta_{1}' = 0 $.
\end{proof}
\medskip

Analogously, we would like to consider the following two cases:
\begin{enumerate}[(i).]
\item $ S < 0 $ somewhere with $ \int_{M} S \dvol < 0 $, and $ H \not\equiv 0 $ on $ \partial M $ with $ \int_{\partial M} H dS_{g} = 0 $;
\item $ S < 0 $ somewhere with $ \int_{M} S \dvol < 0 $, and $ H \not\equiv 0 $ on $ \partial M $ with $ \int_{\partial M} H dS_{g} < 0 $.
\end{enumerate}
Note that although no direct reasoning forces the function $ S $ to change sign, we can see from the case (ii) that if $ H < 0 $ everywhere, then the inequality (\ref{zerog:eqn6}) implies that $ S $ must be positive somewhere also. Since we have assumed that $ \int_{M} S \dvol < 0 $, then $ S $ must change sign. We have the following two results.
\begin{corollary}\label{zerog:cor1}
Let $ (\bar{M}, g) $ be a compact manifold with non-empty smooth boundary $ \partial M $, $ n \geqslant 3 $. Let $ S, H \in \calC^{\infty}(\bar{M}) $ be given functions. Assume $ \eta_{1}' = 0 $. If the functions $ S, H $ satisfy
\begin{equation}\label{zerog:eqn15}
\begin{split}
& S \; \text{changes sign in $ M $}, \; \int_{M} S \dvol < 0; \\
& \int_{\partial M} H dS_{g} = 0,
\end{split}
\end{equation}
then there exists some pointwise conformal metric $ \tilde{g} \in [g] $ such that $ R_{\tilde{g}} = S $ and $ h_{\tilde{g}} = cH \bigg|_{\partial M} $ for some small enough constant $ c > 0 $. 
\end{corollary}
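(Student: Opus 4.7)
The plan is to follow the proof of Theorem~\ref{zerog:thm1} essentially verbatim, observing that the hypothesis $\int_{\partial M} H\, dS_g > 0$ actually played no role in that argument; only $\int_M S\, \dvol < 0$ and the sign-change of $S$ were used to build the upper solution via Lemma~\ref{zerog:lemma1}. Thus the replacement of $\int_{\partial M} H\, dS_g > 0$ by $\int_{\partial M} H\, dS_g = 0$ should be harmless, and the same machinery (local Yamabe equation for the subsolution, Kazdan--Warner-type transformation for the supersolution, gluing, and the new monotone iteration scheme in Theorem~\ref{pre:thm4}) should deliver the claim.

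First I would reduce to the model case $R_g = 0$ and $h_g = 0$ via the Escobar problem, so that the corollary becomes the existence of a positive smooth solution $u$ of $-a\Delta_g u = S u^{p-1}$ in $M$, $\partial u/\partial\nu = \tfrac{2}{p-2} cH u^{p/2}$ on $\partial M$, for some small $c > 0$. The lower solution $u_-$ is built exactly as in Theorem~\ref{zerog:thm1}: pick a point $P$ and neighborhood where $S > 0$, perform a conformal change $g_0 = v^{p-2}g$ producing $R_{g_0} < 0$ nearby via Proposition~\ref{zero:prop2}, solve the local Dirichlet Yamabe-type problem (\ref{zerog:eqn10a}) by Proposition~\ref{local:prop2} or Proposition~\ref{local:prop3}, and extend by zero. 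The vanishing of $u_-$ near $\partial M$ makes the subsolution boundary inequality trivial.

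The heart of the proof, and the only step where one should worry, is the construction of the upper solution. Shrink $\gamma, \gamma' > 0$ so that $\int_M (S + \gamma + \gamma')\, \dvol < 0$ still holds; then the constant $\gamma''$ defined by the Neumann compatibility condition
\begin{equation*}
\frac{2-p}{a} \int_M (S + \gamma + \gamma')\, \dvol = -\gamma''\,\mathrm{Vol}_g(\partial M)
\end{equation*}
is automatically negative, since $2 - p < 0$ and the left-hand side is positive. Note this construction is insensitive to $\int_{\partial M} H\, dS_g$, whose sign never enters. Solve the linear Neumann problem for $\phi$ with data $(2-p)(S+\gamma+\gamma')$ in $M$ and $\gamma''$ on $\partial M$; add a large constant to get a positive $\tilde\phi$ with $\tfrac{(p-1)a}{p-2}\tfrac{|\nabla_g\tilde\phi|^2}{\tilde\phi} + (2-p)\gamma'' < 0$. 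Because $\gamma'' < 0$ is fixed while $c$ is free, the pointwise bound $\gamma'' \leq -2cH\tilde\phi^{1/2}$ on $\partial M$ holds once $c < |\gamma''|/(2\sup_{\partial M}|H|\sup_{\bar M}\tilde\phi^{1/2})$. Apply Lemma~\ref{zerog:lemma1} to pass to $\tilde\varphi = \tilde\phi^{1/(2-p)}$, then conjugate by $v$ to get a supersolution $\varphi$ of (\ref{zerog:eqn10}) with the boundary inequality $\partial\varphi/\partial\nu \geq \tfrac{2}{p-2} cH\varphi^{p/2}$.

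Finally, glue $\varphi$ with the local solution exactly as in Proposition~\ref{zero:prop4} and Theorem~\ref{zerog:thm1} to produce $u_+ \in \calC^{\infty}(\bar M)$ with $0 \leq u_- \leq u_+$ satisfying the required supersolution inequalities (\ref{zerog:eqn14}) with $cH$ in place of $H$; the room for gluing on $\partial\Omega$ is provided by the extra $\gamma'\tilde{u}_1^{p-1}$ term, just as before. Shrink $c$ once more if needed so that the smallness condition (\ref{pre:eqn17}) is met, then invoke the monotone iteration scheme Theorem~\ref{pre:thm4} to produce a positive smooth $u$ solving the $g_0$-version of the equation; the function $v^{-1}u$ then solves (\ref{zerog:eqn1}) with $S$ and $cH$, which is the conclusion. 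The only conceivable obstacle is a hidden use of $\int_{\partial M} H\, dS_g > 0$ in Theorem~\ref{zerog:thm1}, but the compatibility equation shows that the argument depends only on $\int_M S\, \dvol < 0$, so the corollary follows.
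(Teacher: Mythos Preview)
Your proposal is correct and takes exactly the same approach as the paper: the paper's own proof of Corollary~\ref{zerog:cor1} is the single line ``It is essentially the same as the proof in Theorem~\ref{zerog:thm1},'' and you have correctly unpacked that sentence by observing that the sign of $\int_{\partial M} H\, dS_g$ never enters the construction of either the sub- or supersolution, since the negativity of the Neumann datum $\gamma''$ is forced solely by $\int_M S\, \dvol < 0$.
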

\begin{proof}
It is essentially the same as the proof in Theorem \ref{zerog:thm1}.
\end{proof}
\begin{corollary}\label{zerog:cor2}
Let $ (\bar{M}, g) $ be a compact manifold with non-empty smooth boundary $ \partial M $, $ n \geqslant 3 $. Let $ S, H \in \calC^{\infty}(\bar{M}) $ be given functions. Assume $ \eta_{1}' = 0 $. If the functions $ S, H $ satisfy
\begin{equation}\label{zerog:eqn16}
\begin{split}
& S \; \text{changes sign in $ M $}, \; \int_{M} S \dvol < 0; \\
& \int_{\partial M} H dS_{g} < 0,
\end{split}
\end{equation}
then there exists some pointwise conformal metric $ \tilde{g} \in [g] $ such that $ R_{\tilde{g}} = S $ and $ h_{\tilde{g}} = cH \bigg|_{\partial M} $ for some small enough constant $ c > 0 $. 
\end{corollary}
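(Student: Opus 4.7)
The plan is to follow the proof of Theorem \ref{zerog:thm1} essentially verbatim; the hypothesis $\int_{\partial M} H\, dS_g < 0$ enters only through the boundary inequality of Lemma \ref{zerog:lemma1}, where it will actually make matters easier rather than harder. Since $S$ changes sign and $\int_M S\dvol < 0$, the construction of the lower solution will be unchanged: I will pick a point at which $S > 0$, apply Proposition \ref{zero:prop2} to obtain a conformal metric $g_0 = v^{p-2}g$ with $R_{g_0} < 0$ on a small geodesic ball $\Omega$ contained in the set $\{S > 0\}$, solve the local Dirichlet Yamabe equation via Proposition \ref{local:prop2} (or Proposition \ref{local:prop3} in the locally conformally flat case), and extend the positive local solution by zero to obtain $u_- \in \calC^0(\bar M) \cap H^1(M, g_0)$ exactly as in (\ref{zerog:eqn11}). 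The boundary condition on $u_-$ will be automatic because $u_-$ vanishes on a collar of $\partial M$.

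For the upper solution I will construct the auxiliary function $\tilde\phi$ of Lemma \ref{zerog:lemma1} exactly as in Theorem \ref{zerog:thm1}. Choose $\gamma, \gamma' > 0$ so small that $\int_M (S + \gamma + \gamma')\dvol < 0$ still holds, and set
\begin{equation*}
\gamma'' = -\frac{2-p}{a\,\text{Vol}_g(\partial M)} \int_M (S + \gamma + \gamma')\dvol.
\end{equation*}
This is precisely the Neumann compatibility condition for the linear problem $-a\Delta_g \phi = (2-p)(S+\gamma+\gamma')$ in $M$, $\partial_\nu \phi = \gamma''$ on $\partial M$, and because $2 - p < 0$ and the integral is negative, the resulting $\gamma''$ is negative. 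Adding a large constant to the solution will yield a positive $\tilde\phi \in \calC^\infty(\bar M)$ with $\frac{(p-1)a}{p-2} \frac{|\nabla_g \tilde\phi|^2}{\tilde\phi} + (2-p)\gamma'' < 0$, so that the interior inequality of Lemma \ref{zerog:lemma1} holds with the data $S + \gamma'$. Observe that nothing in this paragraph uses the sign of $\int_{\partial M} H\, dS_g$.

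It remains to verify the boundary inequality $\gamma'' \leqslant -2cH\tilde\phi^{1/2}$ pointwise on $\partial M$, and this is where the present hypothesis is strictly more favorable than in Theorem \ref{zerog:thm1}. On the set $\{H \leqslant 0\}$ the right-hand side is nonnegative while $\gamma'' < 0$, so the inequality holds for free; on the (possibly empty) set $\{H > 0\}$ I will choose $c > 0$ small enough that $2c\,\max_{\partial M} H \cdot \max_{\bar M} \tilde\phi^{1/2} \leqslant |\gamma''|$. Setting $\tilde\varphi = \tilde\phi^{1/(2-p)}$ and $\varphi = v \tilde\varphi$, the conformal invariance (\ref{local:eqn23}) and the analogous Robin transformation will convert these pointwise inequalities into the supersolution statement (\ref{zerog:eqn14}) for the data $(S + \gamma', cH)$ with respect to $g_0$. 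The gluing argument from Proposition \ref{zero:prop4}, with $\gamma'$ providing the slack needed to absorb negative terms across $\partial \Omega$, will then produce a smooth positive $u_+ \geqslant u_-$. After a final shrinkage of $c$ to meet the smallness hypothesis (\ref{pre:eqn17}) of Theorem \ref{pre:thm4}, the monotone iteration scheme will deliver a positive smooth solution of the conformal Laplacian equation with data $(S, cH)$ relative to $g_0$, and multiplication by $v^{-1}$ returns the desired solution of (\ref{zerog:eqn1}) relative to $g$. The main obstacle is purely organizational: one must verify that the three successive smallness requirements on $c$, coming from the boundary supersolution inequality, the gluing step, and (\ref{pre:eqn17}), are mutually compatible, but since $u_\pm$ and $\tilde\phi$ are all constructed independently of $c$, this compatibility is automatic.
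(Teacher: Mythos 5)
Your proposal is correct and follows the paper's own route: the paper proves this corollary with the single sentence that it is essentially the same as Theorem \ref{zerog:thm1}, and your elaboration correctly pinpoints that the sign of $\int_{\partial M} H\, dS_{g}$ never enters the construction --- the only place $H$ appears is the boundary supersolution inequality $\gamma'' \leqslant -2cH\tilde{\phi}^{1/2}$, which (as you note) only becomes easier on the set where $H \leqslant 0$ and is handled on $\lbrace H > 0 \rbrace$ by shrinking $c$, exactly as in Theorem \ref{zerog:thm1}. One cosmetic remark: the displayed interior condition $\frac{(p-1)a}{p-2}\frac{\lvert \nabla_{g}\tilde{\phi}\rvert^{2}}{\tilde{\phi}} + (2-p)\gamma'' < 0$ cannot hold as written since both summands are positive ($2-p<0$ and $\gamma''<0$); the slack term should be $(2-p)\gamma$ with $\gamma > 0$, a typo you inherited verbatim from the paper's (\ref{zerog:eqn12}), and your surrounding reasoning makes clear you are using the correct mechanism.
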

\begin{proof}
It is essentially the same as the proof in Theorem \ref{zerog:thm1}.
\end{proof}
\begin{remark}\label{zerog:re1} According to the previous three results, we see that as long as $ \int_{M} S \dvol < 0 $ and $ S $ changes sign, the sign of $ \int_{\partial M} H dS_{g} $ does not matter. We discussed above that we can only see why $ S $ must changes sign when $ H < 0 $ everywhere on $ \partial M $ or $ H \equiv 0 $ on $ \partial M $. Similarly, we have no direct uniform reason to see why we must assume $ \int_{M} S \dvol < 0 $. However, we can see this as a necessary condition if we have $ H > 0 $ everywhere on $ \partial M $ or when $ H \equiv 0 $ on $ \partial M $. The latter case $ H \equiv 0 $ is shown in \S4, and it associates with the case $ \int_{M} H \dvol = 0 $. 

When $ H > 0 $ everywhere, which associates with the case $ \int_{\partial M} H dS_{g} > 0 $, we assume the PDE (\ref{zerog:eqn1}) holds for some non-constant positive, smooth function $ u $, since if $ u $ is a constant then it reduces to the case $ S = H \equiv 0 $, the model case. We multiple both sides by $ u^{1 - p} $ and integrate, we have
\begin{align*}
\int_{M} S \dvol & = -a \int_{M} u^{1 - p} \Delta_{g} u \dvol = - a \int_{\partial M} u^{1 - p} \frac{\partial u}{\partial \nu} dS_{g} + a \int_{M} ( 1- p) u^{-p} \lvert \nabla_{g} v \rvert^{2} \dvol \\
& = - \frac{2a}{ p -2} \int_{\partial M} H u^{1 - \frac{p}{2}} dS_{g} + a( 1- p) \int_{M} u^{-p} \lvert \nabla_{g} v \rvert^{2} \dvol.
\end{align*}
If $ H > 0 $ everywhere then the first term in the last line above is negative as $ -\frac{2a}{ p -2} = -2(n - 1) < 0, \forall n \geqslant 3 $. Since $ u $ is non-constant, the second term in the last line above is also negative as $ 1 - p = 1 - \frac{2n}{n - 2} < 0, \forall n \geqslant 3 $. Thus we conclude that $ \int_{M} S \dvol < 0 $.

Since we have full flexibility to choose the prescribed mean curvature function $ H $ when $ \int_{M} S \dvol < 0 $ and $ S $ changes sign, we would like to conjecture, although we only have partial reasoning, that the prescribed scalar curvature function must satisfy the two conditions: $ \int_{M} S \dvol < 0 $ and $ S $ changes sign, even when the mean curvature function is not identically zero, provided that $ \eta_{1}' = 0 $.
\end{remark}
\medskip

\section{Prescribing Gauss and Geodesic Curvature Problem on Compact Riemann Surfaces with Boundary with Zero Euler Characteristic}
Throughout this section, $ (\bar{M}, g) $ is denoted to be the $ 2 $-dimensional compact Riemann surface with non-empty smooth boundary $ \partial M $ and unit outward normal vector field $ \nu $ along $ \partial M $; throughout this section, the Gauss and geodesic curvatures of $ g $ are denoted to be $ K_{g} $ and $ \sigma_{g} $, respectively. In this section, we try to extend our necessary and sufficient condition of prescribing scalar curvature problem on compact manifolds with boundary with dimensions at least $ 3 $ to $ (\bar{M}, g) $, in terms of prescribing Gauss curvature function and zero geodesic curvature functions for some conformal metric $ \tilde{g} \in [g] $. Instead of the local-to-global analysis and monotone iteration schemes, we apply global variational method here, which is inspired by Kazdan and Warner \cite{KW2}. We then discuss the prescribing Gauss and geodesic curvature problems on $ (\bar{M}, g) $, especially for non-trivial geodesic curvature functions.

The model case when $ \chi(\bar{M}) = 0 $ is $ K_{g} \equiv \sigma_{g} \equiv 0 $, by uniformization theorem and Gauss-Bonnet Theorem. From now on, we always assume our initial metric $ g $ has zero Gauss and zero geodesic curvatures. Given two functions $ K, \sigma \in \calC^{\infty}(\bar{M}) $, the existence of a conformal metric $ \tilde{g} = e^{2u} g $ for some $ u \in \calC^{\infty}(\bar{M}) $ such that $ K_{\tilde{g}} = K $ and $ \sigma_{\tilde{g}} = \sigma $ is reduced to the existence of some smooth solution of the following PDE
\begin{equation}\label{de2:eqn1}
-\Delta_{g} u = K e^{2u} \; {\rm in} \; M, \frac{\partial u}{\partial \nu} = \sigma e^{u} \; {\rm on} \; \partial M.
\end{equation}
When we discuss the necessary and sufficient conditions of prescribing Gauss curvature problem with zero geodesic curvature, the boundary condition in (\ref{de2:eqn1}) is further reduced to the Neumann condition, i.e.
\begin{equation}\label{de2:eqn2}
-\Delta_{g} u = K e^{2u} \; {\rm in} \; M, \frac{\partial u}{\partial \nu} = 0 \; {\rm on} \; \partial M.
\end{equation}
Let's discuss the necessary condition by assuming the existence of some solution $ u \in \calC^{\infty}(\bar{M}) $ of (\ref{de2:eqn2}) first. The relation in (\ref{zero:eqn3}) indicates that
\begin{equation*}
\int_{M} K d\text{Vol}_{\tilde{g}} = 0
\end{equation*}
for the metric $ \tilde{g} = e^{2u} g $. Therefore $ K $ must changes sign or identically zero. Multiple $ e^{-2u} $ on both sides of (\ref{de2:eqn2}) and integrate, we have
\begin{equation*}
\int_{M} K \dvol = -2\int_{M} e^{-2u} \lvert \nabla_{g} u \rvert^{2} \dvol.
\end{equation*}
The inequality above is negative unless the function $ u $ is some constant, which is the case exactly when $ K \equiv 0 $. Therefore the necessary condition of prescribing Gauss curvature function $ K $ for the some conformal metric $ \tilde{g} \in [g] $ with zero geodesic curvature is either $ K \equiv 0 $ or $ K $ changes sign and $ \int_{M} K \dvol < 0 $.
\medskip

We would like to show that the condition given above is exactly the sufficient condition also. It reduces to solve the PDE (\ref{de2:eqn2}) for the function $ K $ satisfying the condition above. We apply the variational method. It is standard to handle the weak solution of (\ref{de2:eqn2}) in the standard Sobolev space $ H^{1}(M, g) $ since the weak form of (\ref{de2:eqn2}) can identify the PDE and the boundary condition by using different test functions, provided that the solution is regular enough, say, at least $ \calC^{2}(\bar{M}) $. We observe that the main issue is to control $ e^{2u} $ term, for its largeness in an appropriate functional space.

In $ 2 $-dimensional case with exponential function, we need the idea of Moser and Trudinger. We start with the definition of a different Hilbert space, which is a subspace of $ H^{1}(M, g) $.
\begin{definition}\label{de2:def1}
Let $ (\bar{M}, g) $ be a compact manifold with non-empty smooth boundary, $ n = \dim \bar{M} \geqslant 1 $. We define
\begin{equation*}
H_{\perp}^{1}(M, g) : = \lbrace u \in H^{1}(M, g) : \int_{M} u \dvol = 0 \rbrace.
\end{equation*}
\end{definition}
It is standard to know that $ H_{\perp}^{1}(M, g) $ is a Hilbert space.

The next two results are some variation of Trudinger's inequality and a consequence of the Trudinger's inequality.
\begin{proposition}\label{de2:prop1}\cite[Formula.~4.15]{T3}
Let $ (\bar{M}, g) $ be a compact manifold with non-empty smooth boundary, $ n = \dim \bar{M} \geqslant 1 $. Then there exists a natural inclusion $ \imath $ such that
\begin{equation*}
\imath : H^{\frac{n}{2}}(M, g) \rightarrow \calL^{q}(M, g), \forall q \in [1, \infty).
\end{equation*}
The inclusion map $ \imath $ is compact. In addition, we have
\begin{equation}\label{de2:eqn3}
\lVert u \rVert_{\calL^{q}(M, g)} \leqslant C_{q} \lVert u \rVert_{H^{\frac{n}{2}}(M, g)}, \forall u \in H^{\frac{n}{2}}(M, g).
\end{equation}
The constant $ C_{q} $ is independent of the choice of $ u $.
\end{proposition}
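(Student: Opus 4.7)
The plan is to reduce the statement to the borderline Trudinger--Moser inequality on $(\bar{M}, g)$, then bootstrap from that to $\calL^q$ embeddings for all finite $q$, and finally obtain compactness by combining the continuous embedding with Rellich--Kondrachov. Since $s = n/2$ sits exactly at the scaling threshold $sp = n$ with $p = 2$, the usual Sobolev inequality fails to reach $\calL^\infty$, so a purely linear interpolation does not suffice; Trudinger's exponential integrability estimate is the right replacement.

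First I would set up a finite cover of $\bar{M}$ by coordinate charts with a subordinate partition of unity $\{\chi_j\}$. For each $u \in H^{n/2}(M, g)$, write $u = \sum_j \chi_j u$ and use a standard extension operator $E : H^{n/2}(M, g) \to H^{n/2}(\R^n)$ (which exists because $\partial M$ is smooth, see e.g. the trace/extension theory in Taylor); this reduces every local estimate to the Euclidean case. On $\R^n$, the Trudinger--Moser inequality provides constants $\alpha_0, C_0 > 0$ such that for every compactly supported $v \in H^{n/2}(\R^n)$ with $\lVert v \rVert_{H^{n/2}} \leqslant 1$,
\begin{equation*}
\int_{\text{supp } v} \exp\!\left( \alpha_0 \lvert v \rvert^{2} \right) \, dx \leqslant C_0.
\end{equation*}
Transferring this back to $(\bar{M}, g)$ via the partition of unity yields the intrinsic Trudinger inequality on the manifold: there exist $\alpha, C > 0$ with $\int_M \exp( \alpha u^2 / \lVert u \rVert_{H^{n/2}}^2 ) \dvol \leqslant C$ for all nonzero $u \in H^{n/2}(M, g)$.

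From the exponential bound, expanding in a Taylor series gives
\begin{equation*}
\sum_{k = 0}^{\infty} \frac{\alpha^{k}}{k!\, \lVert u \rVert_{H^{n/2}}^{2k}} \int_{M} \lvert u \rvert^{2k} \dvol \leqslant C,
\end{equation*}
so in particular $\lVert u \rVert_{\calL^{2k}(M, g)}^{2k} \leqslant C \, \alpha^{-k} k! \, \lVert u \rVert_{H^{n/2}(M, g)}^{2k}$ for every $k \in \mathbb{N}$. For any $q \in [1, \infty)$, pick an even integer $2k \geqslant q$ and apply H\"older's inequality together with ${\rm Vol}_g(M) < \infty$ to obtain the desired continuous estimate $\lVert u \rVert_{\calL^{q}(M, g)} \leqslant C_q \lVert u \rVert_{H^{n/2}(M, g)}$. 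This establishes the existence and continuity of the inclusion $\imath$.

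For compactness, the classical Rellich--Kondrachov theorem on $(\bar{M}, g)$ gives that $H^{n/2}(M, g) \hookrightarrow \calL^{2}(M, g)$ is compact (since $H^{n/2}$ embeds compactly into $H^{s}$ for any $s < n/2$, and $H^{s} \hookrightarrow \calL^{2}$ continuously). Given a bounded sequence $\{u_k\} \subset H^{n/2}(M, g)$, extract a subsequence converging strongly in $\calL^{2}$ and weakly in $H^{n/2}$; the uniform Trudinger bound above then yields uniform bounds in every $\calL^{r}$ with $r < \infty$. Interpolating between $\calL^{2}$ strong convergence and $\calL^{r}$ uniform boundedness via the standard log-convexity inequality $\lVert v \rVert_{\calL^{q}} \leqslant \lVert v \rVert_{\calL^{2}}^{\theta} \lVert v \rVert_{\calL^{r}}^{1 - \theta}$ for $2 < q < r$ and $\theta \in (0, 1)$ gives strong convergence in $\calL^{q}$ for any finite $q$, establishing compactness.

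The main obstacle is the borderline nature of the exponent: one cannot invoke the usual Sobolev embedding $W^{s, 2} \hookrightarrow \calL^{2n/(n - 2s)}$ at $s = n/2$, so the argument genuinely relies on the Trudinger--Moser inequality. A second minor subtlety is ensuring that the extension operator $E$ and the partition of unity interact cleanly with the fractional order $n/2$ (which may be half-integer for odd $n$); this is handled by the standard calculus of Bessel potentials on Lipschitz/smooth domains as in Taylor.
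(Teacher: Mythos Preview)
Your argument is correct and follows the standard route to the borderline Sobolev embedding: Trudinger's exponential integrability, Taylor expansion to get uniform $\calL^{2k}$ bounds, H\"older to interpolate to arbitrary $q$, and then Rellich--Kondrachov plus $\calL^{2}$--$\calL^{r}$ interpolation for compactness. There is nothing to fix.

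However, there is no proof in the paper to compare against. The proposition is stated with a direct citation to \cite[Formula~4.15]{T3} and no argument is supplied; the surrounding Remark only sketches proofs for the downstream Propositions \ref{de2:prop3}--\ref{de2:prop5}, not for this one. So your write-up is strictly more than what the paper offers. If anything, the paper implicitly treats the result as classical, and your proof is precisely the classical derivation (originating with Trudinger, refined by Moser) that the citation points to.
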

\begin{proposition}\label{de2:prop2}
Let $ (\bar{M}, g) $ be a compact manifold with non-empty smooth boundary, $ n = \dim \bar{M} \geqslant 1 $. Let $ \alpha \in \R $ be some constant. If here exists a sequence $ \lbrace u_{k} \rbrace $ such that $ u_{k} \rightarrow u $ weakly in $ H^{\frac{n}{2}}(M, g) $-norm, then
\begin{equation}\label{de2:eqn4}
e^{\alpha u_{k}} \rightarrow e^{\alpha u}
\end{equation}
strongly in $ \calL^{1}(M, g) $-norm.
\end{proposition}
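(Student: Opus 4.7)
The plan is to combine the compact Sobolev embedding of Proposition \ref{de2:prop1} with a Moser--Trudinger-type exponential bound and a Vitali-style uniform integrability argument. Since $u_{k} \to u$ weakly in $H^{\frac{n}{2}}(M,g)$, the sequence is norm-bounded, say $\lVert u_{k} \rVert_{H^{\frac{n}{2}}(M,g)} \leqslant M$. By the compactness of the inclusion $\imath$ in Proposition \ref{de2:prop1}, along a subsequence $u_{k} \to u$ strongly in $\calL^{q}(M,g)$ for every $q \in [1,\infty)$; extracting once more one may also assume $u_{k} \to u$ pointwise almost everywhere, and hence $e^{\alpha u_{k}} \to e^{\alpha u}$ pointwise a.e.

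To upgrade this to $\calL^{1}$-convergence I would invoke Vitali's convergence theorem, for which the remaining ingredient is uniform integrability of the family $\{e^{\alpha u_{k}}\}$. The idea is to bound $\lVert e^{\alpha u_{k}} \rVert_{\calL^{q}(M,g)}$ uniformly in $k$ for some $q > 1$. Expanding the exponential and using (\ref{de2:eqn3}),
\begin{equation*}
\int_{M} e^{q\alpha u_{k}} \dvol = \sum_{j=0}^{\infty} \frac{(q\alpha)^{j}}{j!} \int_{M} u_{k}^{j} \dvol \leqslant \sum_{j=0}^{\infty} \frac{\lvert q\alpha \rvert^{j}}{j!} C_{j}^{j} M^{j},
\end{equation*}
where $C_{j}$ is the embedding constant of Proposition \ref{de2:prop1}. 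The classical Moser--Trudinger inequality in the borderline space $H^{\frac{n}{2}}$ (equivalently, tracking the sharp growth of $C_{j}$) shows that this series is finite, with a bound depending only on $q$, $\alpha$, $M$, and the manifold, but not on $k$. Thus $\sup_{k} \lVert e^{\alpha u_{k}} \rVert_{\calL^{q}(M,g)} < \infty$ for some $q > 1$, which by the standard criterion gives uniform integrability of $\{e^{\alpha u_{k}}\}$.

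Combining pointwise a.e.\ convergence with uniform integrability, Vitali's theorem yields $e^{\alpha u_{k}} \to e^{\alpha u}$ in $\calL^{1}(M,g)$ along the extracted subsequence. To upgrade to the full sequence I would apply the Urysohn subsequence principle: any subsequence of $\{u_{k}\}$ still converges weakly to $u$ in $H^{\frac{n}{2}}(M,g)$, so the argument above extracts a further subsequence on which $e^{\alpha u_{k}} \to e^{\alpha u}$ in $\calL^{1}$, forcing the whole sequence to converge to the same limit.

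The main obstacle is the uniform exponential bound. The ordinary compact embedding $H^{\frac{n}{2}} \hookrightarrow \calL^{q}$ controls polynomial powers but not exponentials, and this is precisely the borderline regime where the Moser--Trudinger inequality is indispensable. Once that sharp estimate is invoked, the rest of the proof is the standard Vitali-plus-subsequence package.
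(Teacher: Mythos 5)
Your proof is correct and is essentially the standard argument that the paper leaves implicit (the paper states this proposition without proof, as ``a consequence of Trudinger's inequality''): bounded sequences in the borderline space $H^{\frac{n}{2}}$ have uniformly bounded exponential moments, which combined with a.e.\ convergence from the compact embedding gives $\calL^{1}$-convergence via Vitali, and the Urysohn subsequence principle removes the subsequence extraction. Two cosmetic points: in the series expansion you should first bound $e^{q\alpha u_{k}} \leqslant e^{q\lvert \alpha \rvert \lvert u_{k}\rvert}$ so that every term $\int_{M} \lvert u_{k}\rvert^{j} \dvol$ is nonnegative before estimating by $C_{j}^{j}M^{j}$, and the convergence of $\sum_{j} \lvert q\alpha\rvert^{j} C_{j}^{j} M^{j}/j!$ does require the quantitative growth $C_{j} = O(j^{1/2})$ for $W^{\frac{n}{2},2}\hookrightarrow \calL^{j}$ (which you correctly flag as the Moser--Trudinger input; for $n=2$ the same uniform bound also follows directly from the paper's Proposition \ref{de2:prop4}, since weak convergence bounds $\lVert \nabla u_{k}\rVert_{\calL^{2}}$ and $\int_{M} u_{k}\dvol$ uniformly).
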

It is well-known that the Poincar\'e inequality holds for elements in $ H_{\perp}^{1}(M, g) $, i.e.
\begin{equation}\label{de2:eqn5}
\lVert u \rVert_{\calL^{2}(M, g)} \leqslant C_{1} \lVert \nabla u \rVert_{\calL^{2}(M, g)}, \forall u \in H_{\perp}^{1}(M, g).
\end{equation}
The constant $ C_{1} $ is independent of the choice of $ u $. According to the Poincar\'e inequality and Proposition \ref{de2:prop1}, which states that the compact embedding $ H^{1}(M, g) \hookrightarrow \calL^{q}(M, g) $ holds for all $ q \in [1, \infty) $ when $ n = \dim \bar{M} = 2 $, we have the following consequences.
\begin{proposition}\label{de2:prop3}
Let $ (\bar{M}, g) $ be a compact manifold with non-empty smooth boundary, $ n = \dim \bar{M} = 2 $. Let $ u \in H_{\perp}^{1}(M, g) $ which satisfies $ \lVert \nabla u \rVert_{\calL^{2}(M, g)} \leqslant 1 $. Then there exist positive constants $ C_{2}, C_{3} $ such that
\begin{equation}\label{de2:eqn6}
\int_{M} e^{C_{2} u} \dvol \leqslant C_{3}.
\end{equation}
The constants $ C_{2}, C_{3} $ are independent of the choice of $ u $.
\end{proposition}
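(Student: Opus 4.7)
The plan is to expand the exponential as a Taylor series, estimate each term via Sobolev embedding and the Poincar\'e inequality for $H_{\perp}^1$, and verify that the resulting series converges by Stirling's formula.

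First I would use the elementary inequality $e^{C_{2} u} \leqslant e^{C_{2} |u|}$ together with the power-series expansion of the exponential, and integrate term-by-term, to obtain
\begin{equation*}
\int_{M} e^{C_{2} u} \dvol \;\leqslant\; \sum_{k = 0}^{\infty} \frac{C_{2}^{k}}{k!} \int_{M} |u|^{k} \dvol \;=\; \sum_{k = 0}^{\infty} \frac{C_{2}^{k}}{k!} \lVert u \rVert_{\calL^{k}(M, g)}^{k}.
\end{equation*}
Next I would apply Proposition \ref{de2:prop1} with $n = 2$, which gives a continuous inclusion $H^{1}(M, g) \hookrightarrow \calL^{k}(M, g)$ with constant $C_{k}$ for every $k \in [1, \infty)$. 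Since $u \in H_{\perp}^{1}(M, g)$, the Poincar\'e inequality (\ref{de2:eqn5}) yields $\lVert u \rVert_{H^{1}(M, g)} \leqslant c_{0} \lVert \nabla u \rVert_{\calL^{2}(M, g)} \leqslant c_{0}$, where $c_{0}$ depends only on $(\bar{M}, g)$.

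The crux of the matter is the quantitative growth of the Sobolev constant in dimension two, namely $C_{k} \leqslant c \sqrt{k}$ for all $k \geqslant 2$, with $c$ depending only on $(\bar{M}, g)$. This is the standard Moser-type estimate and can be proved by a partition of unity reducing to the Euclidean case on a disk, where it follows from the Green's function representation and Young's convolution inequality. Accepting this, one obtains $\lVert u \rVert_{\calL^{k}(M, g)} \leqslant c' \sqrt{k}$ uniformly in $u$, with $c' := c \, c_{0}$. Substituting and invoking Stirling's bound $k! \geqslant (k/e)^{k}$,
\begin{equation*}
\int_{M} e^{C_{2} u} \dvol \;\leqslant\; \sum_{k = 0}^{\infty} \frac{C_{2}^{k} (c')^{k} k^{k/2}}{k!} \;\leqslant\; \sum_{k = 0}^{\infty} \bigl(C_{2} c' e\bigr)^{k} k^{-k/2}.
\end{equation*}
Since $k^{-k/2}$ decays super-exponentially, the series converges for every fixed $C_{2} > 0$, and one may take $C_{3}$ to be its sum.

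The main obstacle is the $\sqrt{k}$ growth rate of $C_{k}$: without this sharp rate one gets only $\int_{M} |u|^{k} \dvol \leqslant (Ck)^{k}$, which would make the exponential series diverge. Alternatively, the entire statement can be deduced in a single step from the Moser-Trudinger inequality $\int_{M} e^{\alpha v^{2}} \dvol \leqslant C$ (valid for $v \in H_{\perp}^{1}(M, g)$ with $\lVert \nabla v \rVert_{\calL^{2}} \leqslant 1$ and some $\alpha = \alpha(\bar{M}, g) > 0$) by applying Young's inequality in the form $C_{2} u \leqslant C_{2}^{2}/(4\alpha) + \alpha u^{2}$ and exponentiating; this is in fact the same underlying estimate packaged differently and is consistent with the exponential compactness already invoked in Proposition \ref{de2:prop2}.
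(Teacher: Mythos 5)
Your proposal is correct and follows essentially the same route as the paper, which (via Remark \ref{de2:re1}) proves Proposition \ref{de2:prop3} by Trudinger's Taylor-expansion argument as in Kazdan--Warner: expand $e^{C_2 u}$, bound $\lVert u\rVert_{\calL^k}$ by the Sobolev constant's $\sqrt{k}$ growth, and sum the series. You correctly isolate the $\sqrt{k}$ growth of the embedding constant as the crux --- that is exactly Trudinger's lemma (Riesz potential representation plus Young's inequality), and your observation that the linear-exponent version then holds for \emph{every} $C_2>0$ is consistent with the Young's-inequality reduction the paper itself uses in Proposition \ref{de2:prop4}.
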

\begin{proposition}\label{de2:prop4}
Let $ (\bar{M}, g) $ be a compact manifold with non-empty smooth boundary, $ n = \dim \bar{M} = 2 $. Then for any function $ u \in H^{1}(M, g) $ and any positive constant $ \beta > 0 $, there exist positive constants $ C_{4}, C_{5} $ such that
\begin{equation}\label{de2:eqn7}
\int_{M} e^{\beta \lvert u \rvert} \dvol \leqslant C_{4} e^{\left( \frac{\beta}{\text{Vol}_{g}(M)} \left\lvert \int_{M} u \dvol \right\rvert+ \frac{ \beta^{2} \lVert \nabla u \rVert^{2} }{4C_{5} } \right)}.
\end{equation}
Here $ C_{4}, C_{5} $ are constants independent of the choice of $ u $.
\end{proposition}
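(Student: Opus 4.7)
The plan is to split $u$ into its mean and mean-zero part, then reduce the estimate to a quadratic Moser--Trudinger inequality and close via Young's inequality.

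First, set $\bar u := \frac{1}{\text{Vol}_g(M)}\int_M u\,\dvol$ and $v := u - \bar u$, so that $v \in H^1_\perp(M,g)$ and $\|\nabla u\|_{\calL^2(M,g)} = \|\nabla v\|_{\calL^2(M,g)}$. The triangle inequality $|u|\leq |\bar u|+|v|$ yields
\begin{equation*}
\int_M e^{\beta|u|}\dvol \;\leq\; \exp\!\Bigl(\tfrac{\beta}{\text{Vol}_g(M)}\bigl|\textstyle\int_M u\,\dvol\bigr|\Bigr)\int_M e^{\beta|v|}\dvol,
\end{equation*}
so it suffices to show $\int_M e^{\beta|v|}\dvol \leq C_4\exp\!\bigl(\beta^2\|\nabla v\|^2_{\calL^2}/(4C_5)\bigr)$ with constants $C_4, C_5$ independent of $v \in H^1_\perp(M,g)$ and $\beta > 0$.

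Second, if $\|\nabla v\|_{\calL^2} = 0$ then $v$ is constant with mean zero, so $v\equiv 0$ and the bound is trivial. Otherwise set $a := \|\nabla v\|_{\calL^2}$ and $w := v/a \in H^1_\perp(M,g)$, so $\|\nabla w\|_{\calL^2}=1$. The plan is to invoke the sharp (quadratic) Moser--Trudinger inequality in the mean-zero class on $(\bar M,g)$,
\begin{equation*}
\int_M e^{C_5 w^2}\dvol \;\leq\; C_4 \quad\text{for every } w \in H^1_\perp(M,g) \text{ with } \|\nabla w\|_{\calL^2} \leq 1,
\end{equation*}
which is the natural strengthening of the linear bound in Proposition \ref{de2:prop3}. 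Combining this with the pointwise Young inequality $\beta a\,t \leq \tfrac{\beta^2 a^2}{4C_5} + C_5 t^2$ applied with $t = |w(x)|$ gives
\begin{equation*}
\int_M e^{\beta|v|}\dvol \;=\; \int_M e^{\beta a|w|}\dvol \;\leq\; e^{\beta^2 a^2/(4C_5)}\!\int_M e^{C_5 w^2}\dvol \;\leq\; C_4\,e^{\beta^2\|\nabla v\|^2/(4C_5)},
\end{equation*}
which, when combined with the earlier reduction, produces (\ref{de2:eqn7}).

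The main obstacle is upgrading the linear exponential bound of Proposition \ref{de2:prop3} to the quadratic Moser--Trudinger bound $\int_M e^{C_5 w^2}\dvol \leq C_4$ for mean-zero $w$ with unit Dirichlet energy; on a closed two-dimensional surface this is Trudinger's classical theorem, and on the compact surface with boundary $(\bar M,g)$ one extracts it either by a smooth doubling/extension argument that transfers the mean-zero normalization to the doubled closed surface, or by working with a local chart covering and summing concentration estimates near the boundary. Once this sharp quadratic estimate is in hand, the remainder of the argument is the elementary Young splitting displayed above and the choice of constants $C_4, C_5$ is forced.
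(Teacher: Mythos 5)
Your argument is correct and is essentially the proof the paper intends: Remark \ref{de2:re1}, following Kazdan--Warner, likewise subtracts the mean $\bar u$, applies the Trudinger-type bound of Proposition \ref{de2:prop3} to the normalized mean-zero part, and closes with the pointwise Young inequality. The quadratic Moser--Trudinger bound $\int_{M} e^{C_{5} w^{2}} \dvol \leqslant C_{4}$ that you single out as the main obstacle is exactly what Trudinger's Taylor-expansion proof invoked for Proposition \ref{de2:prop3} actually delivers (the linear exponent in the paper's statement of that proposition is only the weaker consequence of it), so no genuinely new ingredient is required beyond what the paper already cites.
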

\begin{proposition}\label{de2:prop5}
Let $ (\bar{M}, g) $ be a compact manifold with non-empty smooth boundary, $ n = \dim \bar{M} = 2 $. Then
\begin{equation}\label{de2:eqn8}
u \in H^{1}(M, g) \Rightarrow e^{u} \in \calL^{q}(M, g), \forall q \in [1, \infty).
\end{equation}
\end{proposition}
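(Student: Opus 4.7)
The plan is to reduce the statement directly to Proposition \ref{de2:prop4}, which is the substantive analytic input. Given $u \in H^1(M,g)$ and any $q \in [1,\infty)$, I want to show $\int_M e^{qu}\dvol < \infty$, and since $e^{qu} \leqslant e^{q|u|}$ pointwise, it suffices to bound $\int_M e^{q|u|}\dvol$.

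First, I would invoke Proposition \ref{de2:prop4} with the choice $\beta = q > 0$ applied to the function $u$. This yields the estimate
\begin{equation*}
\int_{M} e^{q|u|} \dvol \leqslant C_{4} \exp\!\left( \frac{q}{\text{Vol}_{g}(M)} \left\lvert \int_{M} u \dvol \right\rvert + \frac{q^{2} \lVert \nabla u \rVert_{\calL^{2}(M,g)}^{2}}{4C_{5}} \right),
\end{equation*}
where $C_4, C_5$ are the constants from Proposition \ref{de2:prop4}, independent of $u$.

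Next I would argue that the exponent on the right side is a finite real number. Indeed, by the Cauchy--Schwarz inequality, $\left\lvert \int_M u \dvol \right\rvert \leqslant \text{Vol}_g(M)^{1/2} \lVert u \rVert_{\calL^2(M,g)}$, which is finite because $u \in H^1(M,g) \subset \calL^2(M,g)$. Likewise, $\lVert \nabla u \rVert_{\calL^2(M,g)}^2$ is finite by definition of the $H^1$-norm. Hence the right-hand side of the displayed inequality is finite, giving $e^{q|u|} \in \calL^1(M,g)$ and therefore $e^u \in \calL^q(M,g)$ for every $q \in [1,\infty)$.

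There is no real obstacle here since the inequality in Proposition \ref{de2:prop4} does all of the work; the only thing to verify is that both quantities appearing in the exponent are controlled by the $H^1$-norm, which is immediate. The proof is essentially a one-line application of the Trudinger-type estimate already established.
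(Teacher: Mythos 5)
Your proof is correct and follows exactly the route the paper indicates in Remark \ref{de2:re1}, namely applying Proposition \ref{de2:prop4} with $\beta = q$ and noting that both terms in the exponent are finite for $u \in H^{1}(M, g)$. The paper only sketches this in the remark; your write-up supplies the same argument in full detail, including the harmless pointwise bound $e^{qu} \leqslant e^{q\lvert u \rvert}$ and the Cauchy--Schwarz control of $\left\lvert \int_{M} u \dvol \right\rvert$.
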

\begin{remark}\label{de2:re1}
The proofs of Proposition \ref{de2:prop3}, Proposition \ref{de2:prop4} and Proposition \ref{de2:prop5} are exactly the same as in \cite{KW2}. Roughly speaking, Proposition \ref{de2:prop3} is proven by a Taylor expansion of the exponential function, due to Trudinger; Proposition \ref{de2:prop4} follows from Proposition \ref{de2:prop3} by applying $ v = \frac{u - \frac{1}{\text{Vol}_{g}(M)}\int_{M}u \dvol}{\beta} $ in (\ref{de2:eqn6}), and then apply Young's inequality; Proposition \ref{de2:prop5} is a natural consequence of Proposition \ref{de2:prop4} by choosing appropriate constant $ \beta $ in (\ref{de2:eqn7}).
\end{remark}
\medskip

With all preparations above, we introduce the next result for sufficient conditions of prescribing Gauss curvature with $ \chi(\bar{M}) = 0 $.
\begin{theorem}\label{de2:thm1}
Let $ (\bar{M}, g) $ be a compact Riemann surface with non-empty smooth boundary $ \partial M $. Assume that $ \chi(\bar{M}) = 0 $. If the given function $ K \in \calC^{\infty}(\bar{M}) $ satisfies
\begin{equation}\label{de2:eqn9}
\begin{split}
& \text{either} \; K \equiv 0; \\
& \text{or} \; \int_{M} K \dvol < 0 \; \text{and $ K $ changes sign}, \\
\end{split}
\end{equation}
then there exists a smooth function $ u $ that solves (\ref{de2:eqn2}) with the function $ K $ given in (\ref{de2:eqn9}), i.e. there exists a metric $ \tilde{g} = e^{2u} g $ such that $ K_{\tilde{g}} = K $ and $ \sigma_{\tilde{g}} = 0 $.
\end{theorem}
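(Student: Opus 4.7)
The plan is to proceed variationally, in the spirit of Kazdan and Warner \cite[Thm.~5.3]{KW2}, adapted to the Neumann boundary condition coming from $\sigma_{\tilde{g}} \equiv 0$. The trivial case $K \equiv 0$ is handled by $u \equiv 0$, so assume $K$ changes sign and $\int_M K \dvol < 0$. I will minimize $J(u) := \int_M |\nabla_g u|_g^2 \dvol$ on the constrained set
\begin{equation*}
\mathcal{F} := \left\{ u \in H^1(M,g) : \int_M K e^{2u}\dvol = 0 \text{ and } \int_M u \dvol = 0 \right\}.
\end{equation*}
First I would verify $\mathcal{F} \neq \emptyset$: pick a bump $\phi \in \calC_c^\infty(M)$ with $\phi \geqslant 0$ supported in the open set $\{K > 0\}$, set $\bar{\phi} := \mathrm{Vol}_g(M)^{-1}\int_M \phi \dvol$, and consider $u_s := s(\phi - \bar{\phi})$. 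Then $\int u_s \dvol = 0$, while $s \mapsto \int_M K e^{2u_s}\dvol$ is continuous, equals $\int_M K\dvol < 0$ at $s = 0$, and tends to $+\infty$ as $s \to +\infty$ (the contribution from $\{\phi = 1\}$ dominates); the intermediate value theorem supplies an element of $\mathcal{F}$.

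Next I would extract a minimizer. For a minimizing sequence $\{u_k\} \subset \mathcal{F}$, the condition $\int u_k \dvol = 0$ combined with the Poincar\'e inequality (\ref{de2:eqn5}) produces a uniform $H^1$-bound, so a subsequence satisfies $u_k \rightharpoonup u$ weakly in $H^1(M,g)$ and strongly in $\calL^2$. By Proposition \ref{de2:prop2}, $e^{2u_k} \to e^{2u}$ strongly in $\calL^1$, so both constraints pass to the limit and $u \in \mathcal{F}$; weak lower semicontinuity of $J$ identifies $u$ as a minimizer. The gradients of the two constraint functionals, namely $\psi \mapsto \int \psi \dvol$ and $\psi \mapsto 2\int K e^{2u}\psi \dvol$, are linearly independent at $u$ (otherwise $K e^{2u}$ would be constant, forcing $K$ not to change sign), so the Lagrange multiplier theorem yields $\alpha, \beta \in \R$ with
\begin{equation*}
\int_M \nabla_g u \cdot \nabla_g \psi \dvol = \alpha \int_M K e^{2u} \psi \dvol + \beta \int_M \psi \dvol, \quad \forall \psi \in H^1(M,g).
\end{equation*}
Testing against $\calC_c^\infty(M)$ gives the bulk equation $-\Delta_g u = \alpha K e^{2u} + \beta$ weakly, and then arbitrary $\psi$ forces $\frac{\partial u}{\partial \nu} = 0$ on $\partial M$. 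Integrating the bulk equation, the Neumann condition and $\int K e^{2u} \dvol = 0$ collapse to $\beta \cdot \mathrm{Vol}_g(M) = 0$, so $\beta = 0$.

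To fix the sign of $\alpha$, multiply $-\Delta_g u = \alpha K e^{2u}$ by $e^{-2u}$ and integrate by parts using Neumann BC:
\begin{equation*}
-2 \int_M e^{-2u} |\nabla_g u|_g^2 \dvol = \alpha \int_M K \dvol.
\end{equation*}
The minimizer $u$ cannot be constant (the only constant in $\mathcal{F}$ is $0$, which violates $\int K \dvol \neq 0$), so the left-hand side is strictly negative; combined with $\int_M K \dvol < 0$ this yields $\alpha > 0$. Setting $\tilde{u} := u + \tfrac{1}{2} \log \alpha$ converts the equation to $-\Delta_g \tilde{u} = K e^{2\tilde{u}}$ in $M$ with $\frac{\partial \tilde{u}}{\partial \nu} = 0$ on $\partial M$, i.e.\ (\ref{de2:eqn2}). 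Regularity is standard: Proposition \ref{de2:prop5} gives $e^{2\tilde{u}} \in \calL^q(M,g)$ for every $q < \infty$, so $K e^{2\tilde{u}} \in \calL^q$; Neumann $\calL^q$-regularity (in the spirit of Theorem \ref{pre:thm2}) places $\tilde{u}$ in $W^{2,q}$ for all such $q$, hence $\tilde{u} \in \calC^{1,\beta}(\bar M)$, and a bootstrap upgrades this to $\tilde{u} \in \calC^\infty(\bar M)$. The metric $\tilde{g} = e^{2\tilde{u}} g$ then satisfies $K_{\tilde{g}} = K$ and $\sigma_{\tilde{g}} = 0$. The main obstacle, I expect, is the sign analysis of the Lagrange multiplier $\alpha$, which is precisely where the hypothesis $\int_M K \dvol < 0$ is used in an essential way, together with the two-dimensional Trudinger-type compactness of Proposition \ref{de2:prop2} that allows passage to the limit in the exponential constraint.
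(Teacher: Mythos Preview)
Your proposal is correct and follows essentially the same variational approach as the paper: minimize the Dirichlet energy over the set of zero-mean $H^{1}$ functions satisfying $\int_{M} K e^{2u}\dvol = 0$, invoke the Trudinger-type compactness of Proposition~\ref{de2:prop2} to pass to the limit, use Lagrange multipliers, and identify the multipliers by testing against $1$ and $e^{-2u}$. Your write-up is in fact a bit more explicit than the paper's in a few places (the construction showing $\mathcal{F}\neq\emptyset$, the verification that the two constraint gradients are linearly independent, and the observation that no constant lies in $\mathcal{F}$), but the route is the same.
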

\begin{proof}
When $ K \equiv 0 $, it is trivial. So assume that $ K $ is nontrivial such that (\ref{de2:eqn9}) holds. The variational method used here is essentially due to Kazdan and Warner \cite{KW2}. Consider the space
\begin{equation}\label{de2:eqn10}
B : = \lbrace u \in H_{\perp}^{1}(M, g) : \int_{M} K e^{2u} \dvol = 0 \rbrace.
\end{equation}
Since $ K $ changes sign, due to the same reasoning in \cite{KW2}, the space $ B $ is not empty, i.e. there exists at least one element $ u_{0} \in B $. Define the functional to be
\begin{equation}\label{de2:eqn11}
J(u) : = \frac{1}{2} \int_{M} \lvert \nabla_{g} u \rvert^{2} \dvol.
\end{equation}
Our goal is to minimize $ J $ within the space $ B $. $ J(u) \geqslant 0, \forall u \in B $, hence there exists a minimizing sequence $ \lbrace u_{k} \rbrace_{k \in \mathbb{N}} \in B $ such that $ J(u_{k}) $ converges from the right to $ A : = \inf_{u \in B} J(u) $. By making further choices of the sequence, if necessary, we may assume that $ J(u_{k}) \leqslant J(u_{0}), \forall k \in \mathbb{N} $. By Poincar\'e inequality, we have
\begin{equation*}
\lVert u_{k} \rVert_{H^{1}(M, g)} \leqslant \left(1 + C_{1} \right) \lVert \nabla_{g} u_{k} \rVert_{\calL^{2}(M, g)} \leqslant \left(1 + C_{1} \right) J(u_{0})^{\frac{1}{2}}, \forall k \in \mathbb{N}.
\end{equation*}
Therefore $ \lbrace u_{k} \rbrace_{k \in \mathbb{N}} $ is uniformly bounded in $ H^{1} $-norm. Standard Hilbert space theory as well as the weak compactness implies that there exists a subsequence $ \lbrace u_{k_{j}} \rbrace $ of the original sequence such that
\begin{equation}\label{de2:eqn12}
u_{k_{j}} \rightharpoonup u \; \text{weakly in $ H^{1}(M, g) $}.
\end{equation}
For simplicity of notation, we still denote the subsequence to be $ u_{k} $ in this proof. Note that by Rellich's theorem, $ u_{k} \rightarrow u $ strongly in $ \calL^{2} $-norm. We show that $ u \in H_{\perp}^{1}(M, g) $. To see this, we observe that
\begin{equation*}
\left\lvert \int_{M} u \dvol \right\rvert = \left\lvert \int_{M} \left( u - u_{k} \right) \dvol \right\rvert \leqslant \text{Vol}_{g}(M)^{\frac{1}{2}} \left( \int_{M} \lvert u_{k} - u \rvert^{2} \dvol \right)^{\frac{1}{2}}.
\end{equation*}
The last term above could be arbitrarily small by making $ k $ large enough. It follows that $ u \in H_{\perp}^{1}(M, g) $. By Proposition \ref{de2:prop2}, the weak convergence in (\ref{de2:eqn12}) implies that
\begin{equation*}
\left\lvert \int_{M} K e^{2u} \dvol \right\rvert = \left\lvert \int_{K} \left( K e^{2u} - K e^{2u_{k}} \right) \dvol \right\rvert \rightarrow 0
\end{equation*}
as $ k \rightarrow \infty $. Hence $ \int_{M} K e^{2u} \dvol = 0 $. We conclude that the weak limit $ u $ in (\ref{de2:eqn12}) is an element of $ B $.

Following exactly the same argument in \cite[Thm.~5.3]{KW2}, we conclude that
\begin{equation*}
A = J(u),
\end{equation*}
i.e. $ u $ minimizes the functional $ J $ for all elements in $ B $.

According to the variational method with the constraint, the Euler-Lagrange equation with respect to the minimizer $ u $ is of the form
\begin{equation}\label{de2:eqn13}
\int_{M} \nabla_{g} u \cdot \nabla_{g} v \dvol + \int_{M} c_{1} K e^{2u} v \dvol + \int_{M} c_{2} v \dvol = 0, \forall v \in H^{1}(M, g)
\end{equation}
for some constants $ c_{1}, c_{2} $ which will be determined later. (\ref{de2:eqn13}) implies that the weak solution $ u $ has homogeneous Neumann boundary condition as natural boundary conditions. However, (\ref{de2:eqn13}) contains no boundary terms. We consider the regularity of $ u $. Since $ u \in H^{1}(M, g) $, Proposition \ref{de2:prop5} implies that $ K e^{2u} \in \calL^{q}(M, g), \forall q \in [1, \infty) $. We take some $ q > 2 $. It follows from the $ W^{s, q} $-type elliptic regularity, see e.g. \cite[Prop.~2.2]{XU4}, that $ u \in H^{2, q}(M, g) $. Then by the standard bootstrapping argument, we conclude that $ u \in \calC^{\infty}(\bar{M}) $. It follows from (\ref{de2:eqn13}) that $ \frac{\partial u}{\partial \nu} = 0 $ in the strong sense.

To determine $ c_{2} $, we take $ v \equiv 1 $. By the fact that $ u \in B $ hence $ \int_{M} K e^{2u} \dvol = 0 $, we have $ c_{2} = 0 $. We take $ v = e^{-2u} $, it then follows that
\begin{equation*}
\int_{M} - 2e^{-2u} \lvert \nabla_{g} u \rvert^{2} \dvol + c_{1} \int_{M} K \dvol = 0.
\end{equation*}
Hence $ c_{1} < 0 $ since $ \int_{M} K \dvol < 0 $. Since $ u \in \calC^{\infty}(\bar{M}) $, we conclude that
\begin{equation}\label{de2:eqn14}
-\Delta_{g} u = -c_{1} K e^{2u} \; {\rm in} \; M, \frac{\partial u}{\partial \nu} = 0 \; {\rm on} \; \partial M
\end{equation}
holds in the classical sense. Since $ c_{1} < 0 $, we denote $ -c_{1} = e^{2\gamma} $ for some $ 2\gamma $. Denote
\begin{equation*}
\tilde{u} : = u + \gamma,
\end{equation*}
we have
\begin{equation*}
-\Delta_{g} \tilde{u} = K e^{2 \tilde{u}} \; {\rm in} \; M, \frac{\partial \tilde{u}}{\partial \nu} = 0 \; {\rm on} \; \partial M.
\end{equation*}
$ \tilde{u} $ is the desired solution of (\ref{de2:eqn2}) for $ K $ satisfying (\ref{de2:eqn9}).
\end{proof}

\medskip
\section{The Generalization of the Han-Li Conjecture}
In this section, we discuss the generalization of the Han-Li conjecture \cite{HL}.The standard Han-Li conjecture is equivalent to the existence of a positive, smooth solution of the following PDE
\begin{equation}\label{HL:eqn1}
-a\Delta_{g} u + R_{g} u = \lambda u^{p-1} \; {\rm in} \; M, \frac{\partial u}{\partial \nu} + \frac{2}{p-2} h_{g} u = \frac{2}{p-2} \zeta u^{\frac{p}{2}} \; {\rm on} \; \partial M
\end{equation}
for some constants $ \lambda, \zeta \in \R $. It was first proven in \cite{XU5}, which is listed below. Note that the constant mean curvature $ \zeta $ on $ \partial M $ is required to be positive. 
\begin{theorem}\label{HL:thm1}\cite[\S1]{XU5}
Let $ (\bar{M}, g) $ be a compact manifold with smooth boundary, $ \dim \bar{M} \geqslant 3 $. Let $ \eta_{1}' $ be the first eigenvalue of the boundary eigenvalue problem $ \Box_{g} = \eta_{1}' u $ in $ M $, $ B_{g} u = 0 $ on $ \partial M $. Then: \\
\begin{enumerate}[(i).]
\item If $ \eta_{1}' = 0 $, (\ref{HL:eqn1}) with constant functions $ S = \lambda \in \R $, $ H = \zeta \in \R $ admits a real, positive solution $ u \in \calC^{\infty}(\bar{M}) $ with $ \lambda = \zeta = 0 $;
\item If $ \eta_{1}' < 0 $, (\ref{HL:eqn1}) with constant functions $ S = \lambda \in \R $, $ H = \zeta \in \R $ admits a real, positive solution $ u \in \calC^{\infty}(\bar{M}) $ with some $ \lambda < 0 $ and $ \zeta > 0 $;
\item If $ \eta_{1}' > 0 $, (\ref{HL:eqn1}) with constant functions $ S = \lambda \in \R $, $ H = \zeta \in \R $ admits a real, positive solution $ u \in \calC^{\infty}(\bar{M}) $ with some $ \lambda > 0 $ and $ \zeta > 0 $.
\end{enumerate}
\end{theorem}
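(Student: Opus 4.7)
The plan is to build a positive subsolution $u_-$ and a positive supersolution $u_+$ of~(\ref{HL:eqn1}) with $u_- \leqslant u_+$ in each case, and then apply a monotone iteration scheme such as Theorem~\ref{pre:thm4} to produce a positive smooth solution $u$. Case~(i) is immediate: by the Escobar problem \cite{XU4}, when $\eta_1' = 0$ the initial metric can be replaced, within its conformal class, by a scalar-flat metric with minimal boundary, and then $u \equiv 1$ solves~(\ref{HL:eqn1}) with $\lambda = \zeta = 0$.

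For (ii), I would first conformally normalize $g$ so that $R_g \equiv R_0 < 0$ is a negative constant and $h_g \equiv h_0 \geqslant 0$ is a nonnegative constant; the existence of such a model metric when $\eta_1' < 0$ is essentially the Escobar problem. Because the nonlinearities $\lambda u^{p-1}$ and $\tfrac{2}{p-2}\zeta u^{p/2}$ are supercritical, it is natural to test with constants $u_- = \epsilon$ and $u_+ = M$. The interior inequalities then reduce to $|\lambda|\epsilon^{p-2} \leqslant |R_0| \leqslant |\lambda| M^{p-2}$, which is satisfied for any fixed $\lambda < 0$ once $\epsilon$ is small and $M$ large; the boundary inequalities $\tfrac{2}{p-2}h_0 u_\pm \lessgtr \tfrac{2}{p-2} \zeta u_\pm^{p/2}$ can then be balanced by choosing $\zeta > 0$ of appropriate size and exploiting the latitude in $\theta_1, \theta_2$ afforded by Theorem~\ref{pre:thm4}.

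Case~(iii) is the main obstacle. With $\eta_1' > 0$ the metric can be normalized so that $R_g > 0$ somewhere, but no constant can serve as a supersolution because the supercritical term $\lambda u^{p-1}$ dominates at large $u$. The strategy I would adopt mirrors the local-to-global analysis of \S\S3--4: construct $u_-$ by extending by zero the positive solution of the local Yamabe-type Dirichlet problem (Proposition~\ref{local:prop2} when $g$ is not locally conformally flat, or Proposition~\ref{local:prop3} otherwise) on a small geodesic ball, after arranging $R_g < 0$ inside that ball via the conformal change of Proposition~\ref{zero:prop2}; and construct $u_+$ by gluing this local piece to a globally positive function arising from an appropriate perturbed linear problem, in the spirit of Proposition~\ref{zero:prop4}. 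The hard part is the simultaneous choice of small positive constants $\lambda$ and $\zeta$ so that the glued upper solution satisfies both the interior and the boundary supersolution inequalities while remaining above the lower solution; once these are in place, the smoothness and positivity of the iteration limit follow from the $\calL^q$-regularity of Theorem~\ref{pre:thm2}, Schauder estimates, and the strong maximum principle, exactly as at the end of the proof of Theorem~\ref{pre:thm4}.
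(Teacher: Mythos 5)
First, a point of reference: the paper does not prove Theorem \ref{HL:thm1} at all --- it is imported verbatim from \cite[\S1]{XU5}, and the present article only proves the new variants with $\zeta<0$ (Theorems \ref{HL:thm2} and \ref{HL:thm3}). Your proposal therefore has to stand on its own, and while case (i) is correct and trivial after the Escobar normalization, cases (ii) and (iii) contain a genuine gap located exactly where the original conjecture is hard: the positive sign of the boundary constant $\zeta$.

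In case (ii), the normalization you invoke --- a conformal representative with \emph{both} $R_g\equiv R_0<0$ constant and $h_g\equiv h_0$ a nonzero constant --- is not the Escobar problem but the Han--Li conjecture itself (and if it were available, $u\equiv 1$ would already be an exact solution, no iteration needed). The normalizations actually available are constant scalar curvature with $h_g=0$, or $R_g=0$ with constant mean curvature. Take the first, so $h_0=0$: then for a constant supersolution $u_+=M$ the boundary requirement of Theorem \ref{pre:thm4} reads $0=\frac{2}{p-2}h_0M\geqslant\theta_2M\geqslant\frac{2}{p-2}\zeta M^{p/2}$ with $\theta_2\geqslant 0$, which forces $\zeta\leqslant 0$. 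If instead $h_0>0$, the constant subsolution fails, since $\frac{2}{p-2}h_0\epsilon\leqslant\theta_1\epsilon$ is incompatible with $\theta_1\leqslant 0$. Dropping the $\theta$'s and comparing the raw inequalities directly, constants $u_-=\epsilon\leqslant u_+=M$ require $h_0\epsilon^{1-p/2}\leqslant\zeta\leqslant h_0M^{1-p/2}$, and since $1-\frac{p}{2}=-\frac{2}{n-2}<0$ this interval is empty unless $\epsilon=M$. There is no latitude in $\theta_1,\theta_2$ to rescue this; the scheme's own hypotheses demand $\theta_2>0$ whenever $H>0$ somewhere, i.e.\ a supersolution with strictly positive Robin data, which a constant cannot supply over a minimal boundary. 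The same obstruction defeats your case (iii): the glued upper solution coincides near $\partial M$ with the global eigenfunction-type piece satisfying $B_gu_+=0$, so again $0\geqslant\theta_2u_+\geqslant\frac{2}{p-2}\zeta u_+^{p/2}$ is incompatible with $\zeta>0$. This is precisely why the present paper's extensions (Theorems \ref{HL:thm2}--\ref{HL:thm3}) reach only $\zeta<0$ with this machinery, and why the $\zeta>0$ statement required the separate boundary construction of \cite{XU5}. The missing idea is not ``a simultaneous choice of small positive constants'' but a supersolution whose outward normal derivative is genuinely positive on $\partial M$; neither constants nor the Robin eigenfunction provide one.
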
 
With the aid of the new version of the monotone iteration scheme in Theorem \ref{pre:thm4}, we can extend the Han-Li conjecture by showing that on compact manifolds $ (\bar{M}, g) $ with boundary,
\begin{enumerate}[(i).]
\item If $ \eta_{1}' < 0 $, then (\ref{HL:eqn1}) admits a positive, smooth solution with some $ \lambda < 0 $ and $ \zeta < 0 $; 
\item If $ \eta_{1}' > 0 $, then (\ref{HL:eqn1}) admits a positive, smooth solution with some $ \lambda > 0 $ and $ \zeta < 0 $.
\end{enumerate}
As a prerequisite, we need a result in terms of the perturbation of negative first eigenvalue of conformal Laplacian.
\begin{proposition}\label{HL:prop1}
Let $ (\bar{M}, g) $ be a compact Riemannian manifold with non-empty smooth boundary $ \partial M $, $ n = \dim \bar{M} \geqslant 3 $. Let $ \beta > 0 $ be a small enough constant. If $ \eta_{1}' < 0 $, then the quantity
\begin{equation*}
\eta_{1, \beta}' = \inf_{u \neq 0} \frac{a\int_{M} \lvert \nabla_{g} u \rvert^{2} \dvol + \int_{M} R_{g} u^{2} \dvol + \frac{2a}{p-2} \int_{\partial M} (h_{g} + \beta) u^{2} dS}{\int_{M} u^{2} \dvol} < 0.
\end{equation*}
In particular, $ \eta_{1, \beta}' $ satisfies
\begin{equation}\label{HL:eqn2}
-a\Delta_{g} \varphi + R_{g} \varphi = \eta_{1, \beta}' \varphi \; {\rm in} \; M, \frac{\partial \varphi}{\partial \nu} + \frac{2}{p-2} (h_{g} + \beta) \varphi = 0 \; {\rm on} \; \partial M
\end{equation}
with some positive function $ \varphi \in \calC^{\infty}(\bar{M}) $.
\end{proposition}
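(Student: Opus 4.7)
The plan is to treat $\eta_{1,\beta}'$ as a continuous perturbation of $\eta_{1,0}' = \eta_1' < 0$ via the Rayleigh quotient, produce a minimizer by the direct method, and upgrade it to a positive smooth eigenfunction using elliptic regularity and the strong maximum principle. First I would establish negativity: by hypothesis $\eta_1'$ is associated with a positive smooth eigenfunction $\varphi_0 \in \calC^\infty(\bar M)$, and standard variational theory for the conformal Laplacian with homogeneous Robin condition (which is self-adjoint with compact resolvent) identifies
\begin{equation*}
\eta_1' = \frac{a\int_M \lvert \nabla_g \varphi_0 \rvert^2 \dvol + \int_M R_g \varphi_0^2 \dvol + \frac{2a}{p-2}\int_{\partial M} h_g \varphi_0^2 \, dS_g}{\int_M \varphi_0^2 \dvol}.
\end{equation*}
Plugging $\varphi_0$ into the Rayleigh quotient defining $\eta_{1,\beta}'$ gives
\begin{equation*}
\eta_{1,\beta}' \leqslant \eta_1' + \frac{2a\beta}{p-2} \cdot \frac{\int_{\partial M} \varphi_0^2 \, dS_g}{\int_M \varphi_0^2 \dvol},
\end{equation*}
which remains strictly negative for all $\beta \in (0, \beta_0)$ with $\beta_0$ small enough.

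Next I would produce a minimizer by the direct method. Take a minimizing sequence $\{u_k\} \subset H^1(M,g)$ normalized by $\lVert u_k \rVert_{\calL^2(M,g)} = 1$. The trace interpolation inequality $\lVert u \rVert_{\calL^2(\partial M)}^2 \leqslant \varepsilon \lVert \nabla_g u \rVert_{\calL^2(M,g)}^2 + C_\varepsilon \lVert u \rVert_{\calL^2(M,g)}^2$, a standard consequence of Ehrling's lemma and the compactness of the trace into $\calL^2(\partial M)$, absorbs the boundary term and shows that $\{u_k\}$ is bounded in $H^1(M,g)$. Extract a weak limit $\varphi$; Rellich--Kondrachov gives strong $\calL^2(M,g)$-convergence, compactness of the trace gives strong $\calL^2(\partial M)$-convergence, and lower semicontinuity of the Dirichlet energy shows that $\varphi$ attains $\eta_{1,\beta}'$ with $\lVert \varphi \rVert_{\calL^2(M,g)} = 1$. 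The Lagrange multiplier rule makes $\varphi$ a weak solution of (\ref{HL:eqn2}); Theorem \ref{pre:thm2} together with a bootstrap argument promotes $\varphi$ to $\calC^\infty(\bar M)$.

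Finally, for positivity, since $\lvert \nabla_g \lvert \varphi \rvert \rvert = \lvert \nabla_g \varphi \rvert$ almost everywhere, $\lvert \varphi \rvert$ is also a minimizer, hence again a weak solution of (\ref{HL:eqn2}), and so is smooth by elliptic regularity. This forces $\varphi$ to have constant sign, so I may assume $\varphi \geqslant 0$. Rewriting the equation as $-a\Delta_g \varphi + (R_g + M_0)\varphi = (\eta_{1,\beta}' + M_0)\varphi$ with $M_0$ chosen large enough to make both the zeroth-order coefficient and the right-hand side nonnegative, the interior strong maximum principle forbids interior zeros of $\varphi$, while Hopf's boundary lemma combined with the Robin condition $\partial_\nu \varphi = -\frac{2}{p-2}(h_g + \beta)\varphi$, which forces $\partial_\nu \varphi = 0$ at any boundary zero, rules out boundary zeros as well. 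Thus $\varphi > 0$ on $\bar M$. The main delicate point is the boundedness of the minimizing sequence in $H^1(M,g)$, which amounts exactly to the coercivity of the Rayleigh quotient modulo $\calL^2$; this is guaranteed by the trace interpolation inequality quoted above, after which the rest is standard spectral theory adapted to Robin boundary conditions.
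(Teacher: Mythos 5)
Your proof is correct and the essential step — plugging the unperturbed first eigenfunction into the perturbed Rayleigh quotient to get $\eta_{1,\beta}' \leqslant \eta_1' + O(\beta)$ — is exactly the paper's argument (the paper even drops the constant $\tfrac{2a}{p-2}$ that you correctly retain). The remainder of your write-up (direct method, trace interpolation for coercivity, regularity, and positivity via the strong maximum principle and Hopf's lemma) supplies standard spectral-theoretic details that the paper's proof simply takes for granted.
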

\begin{proof}
Since $ \eta_{1}' < 0 $, the normalized first eigenfunction $ \varphi_{1} $, i.e. $ \int_{M} \varphi_{1}^{2} \dvol = 1 $, satisfies
\begin{equation*}
\eta_{1}' = a\int_{M} \lvert \nabla_{g} \varphi_{1} \rvert^{2} \dvol + \int_{M} R_{g} \varphi_{1}^{2} \dvol + \frac{2a}{p-2} \int_{\partial M} h_{g} \varphi_{1}^{2} dS
\end{equation*}
By characterization of $ \eta_{1, \beta}' $, we have
\begin{equation*}
\eta_{1, \beta}' \leqslant a\int_{M} \lvert \nabla_{g} \varphi_{1} \rvert^{2} \dvol + \int_{M} R_{g} \varphi_{1}^{2} \dvol + \frac{2a}{p-2} \int_{\partial M} (h_{g} + \beta) \varphi_{1}^{2} dS = \eta_{1}' + \beta \int_{\partial M} \varphi_{1}^{2} dS.
\end{equation*}
Since $ \varphi_{1} $ is fixed, it follows that $ \eta_{1, \beta}' < 0 $ if $ \beta > 0 $ is small enough. 
\end{proof}
The above result says that when $ \eta_{1}' < 0 $, there is a small room to allow us to perturb the boundary condition, but still keep the sign of the perturbed eigenvalue unchanged. We anticipate the same property for positive first eigenvalue case. But here we need the perturbation on the conformal Laplacian operator but not on the Robin boundary condition for positive first eigenvalue case, which was mentioned in \cite[\S5]{XU4}.
\begin{proposition}\label{HL:prop2}
Let $ (\bar{M}, g) $ be a compact Riemannian manifold with non-empty smooth boundary $ \partial M $, $ n = \dim \bar{M} \geqslant 3 $. Let $ \beta < 0 $ be a constant with small enough absolute value. If $ \eta_{1}' > 0 $, then the quantity
\begin{equation*}
\eta_{1, \beta}' = \inf_{u \neq 0} \frac{a\int_{M} \lvert \nabla_{g} u \rvert^{2} \dvol + \int_{M} \left( R_{g} + \beta \right) u^{2} \dvol + \frac{2a}{p-2} \int_{\partial M} h_{g} u^{2} dS}{\int_{M} u^{2} \dvol} > 0.
\end{equation*}
In particular, $ \eta_{1, \beta}' $ satisfies
\begin{equation}\label{HL:eqn3}
-a\Delta_{g} \varphi + \left( R_{g} + \beta \right) \varphi = \eta_{1, \beta}' \varphi \; {\rm in} \; M, \frac{\partial \varphi}{\partial \nu} + \frac{2}{p-2} h_{g} \varphi = 0 \; {\rm on} \; \partial M
\end{equation}
with some positive function $ \varphi \in \calC^{\infty}(\bar{M}) $.
\end{proposition}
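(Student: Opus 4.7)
The plan is to reduce the claim to a direct identity between the perturbed and unperturbed Rayleigh quotients, since the replacement $R_g \mapsto R_g + \beta$ with $\beta$ a constant merely shifts the Rayleigh quotient by $\beta$. By the variational characterization,
\begin{equation*}
\eta_{1,\beta}' = \inf_{u \neq 0} \left( \frac{a\int_{M} \lvert \nabla_g u \rvert^2 \dvol + \int_{M} R_g u^2 \dvol + \frac{2a}{p-2}\int_{\partial M} h_g u^2 dS}{\int_{M} u^2 \dvol} + \beta\right) = \eta_{1}' + \beta.
\end{equation*}
Hence for $\beta < 0$ with $\lvert \beta \rvert < \eta_{1}'$ we obtain $\eta_{1,\beta}' = \eta_{1}' + \beta > 0$, which is the first assertion.

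For the existence of the positive eigenfunction $\varphi$, I would run a standard Rayleigh quotient minimization. Take a minimizing sequence $\{u_k\} \subset H^1(M,g)$ with $\int_M u_k^2 \dvol = 1$; boundedness in $H^1$-norm follows from the Rayleigh quotient together with the trace theorem to absorb the boundary term. Rellich compactness then yields a weak $H^1$-limit $\varphi$ with $\int_M \varphi^2 \dvol = 1$ along a subsequence, and weak lower semicontinuity of the quadratic forms (using the strong $L^2$-convergence on $M$ and on $\partial M$) forces $\varphi$ to attain the infimum $\eta_{1,\beta}'$. The Euler-Lagrange equation produces (\ref{HL:eqn3}) weakly, and the $W^{s,q}$-type elliptic regularity in Theorem \ref{pre:thm2}, combined with a standard bootstrap argument, upgrades $\varphi$ to $\calC^{\infty}(\bar{M})$.

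Finally, to obtain $\varphi > 0$, replace $\varphi$ by $\lvert \varphi \rvert$, which is an admissible competitor with the same Rayleigh quotient (since $\lvert \nabla_g \lvert \varphi \rvert \rvert = \lvert \nabla_g \varphi \rvert$ almost everywhere, and the boundary form depends only on $\varphi^2$); hence we may assume $\varphi \geq 0$. Rewriting (\ref{HL:eqn3}) as $-a\Delta_g \varphi + (R_g + \beta - \eta_{1,\beta}')\varphi = 0$ in $M$ with the Robin condition $\partial \varphi /\partial \nu = -\tfrac{2}{p-2} h_g \varphi$ on $\partial M$, the strong maximum principle excludes any interior zero of $\varphi$ (since $\lVert \varphi \rVert_{\calL^2} = 1$ forbids $\varphi \equiv 0$), and Hopf's boundary point lemma together with the Robin relation rules out boundary zeros as well. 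The upshot is $\varphi > 0$ on $\bar{M}$. The only real technical point is verifying that $\lvert \varphi \rvert$ remains an admissible competitor for the boundary quadratic form, which is why I single out the form's dependence on $\varphi^2$ above; the eigenvalue identity itself requires no delicate perturbation argument, unlike the negative case in Proposition \ref{HL:prop1} where the perturbation is on the boundary coefficient and only an inequality is available.
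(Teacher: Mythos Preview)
Your proof is correct and rests on the same key identity as the paper, namely $\eta_{1,\beta}' = \eta_1' + \beta$. The paper's argument is more economical on the second half: rather than re-running the direct method and the maximum principle, it simply observes that the \emph{same} eigenfunction $\varphi$ for $\eta_1'$ satisfies $-a\Delta_g \varphi + (R_g+\beta)\varphi = (\eta_1'+\beta)\varphi$ with the unchanged Robin condition, so smoothness and positivity are inherited from the unperturbed problem for free.
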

\begin{proof}
Set $ \eta_{1, \beta}' = \eta_{1} + \beta $, we have
\begin{equation*}
-a\Delta_{g} \varphi + \left( R_{g} + \beta \right) \varphi = \eta_{1}' \varphi + \beta \varphi \; {\rm in} \; M, \frac{\partial \varphi}{\partial \nu} + \frac{2}{p-2} h_{g} \varphi = 0 \; {\rm on} \; \partial M.
\end{equation*}
The function $ \varphi $ is the first eigenfunction with respect to $ \eta_{1}' $. If $ \lvert \beta \rvert $ is small enough, $ \eta_{1, \beta}' > 0 $ for sure.
\end{proof}
We are ready to show the extension of the Han-Li conjecture in Case (i), i.e. $ \eta_{1}' < 0 $.
\begin{theorem}\label{HL:thm2}
Let $ (\bar{M}, g) $ be a compact Riemannian manifold with non-empty smooth boundary $ \partial M $, $ n = \dim \bar{M} \geqslant 3 $. If $ \eta_{1}' < 0 $, there exist some negative constants $ \lambda, \zeta < 0 $ such that the PDE (\ref{HL:eqn1}) with $ S = \lambda $, $ H = \zeta $ admits a positive, smooth solution $ u $ on $ \bar{M} $. Equivalently, there exists a Yamabe metric $ \tilde{g} = u^{p-2} g $ such that $ R_{\tilde{g}} = \lambda < 0 $ and $ h_{\tilde{g}} = \zeta < 0 $.
\end{theorem}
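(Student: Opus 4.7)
The plan is to apply the new monotone iteration scheme of Theorem~\ref{pre:thm4} with a sub-solution built from the perturbed first eigenfunction of Proposition~\ref{HL:prop1} and a constant super-solution. First, I would invoke Theorem~\ref{HL:thm1}(ii) to replace the initial metric $g$ by a pointwise conformal metric in which $R_g \equiv R_0 < 0$ and $h_g \equiv h > 0$ are both constants; since the sign of $\eta_1'$ is a conformal invariant, this preliminary normalization preserves the hypothesis $\eta_1' < 0$ and simultaneously brings us inside the structural requirement of Theorem~\ref{pre:thm4} that $h_g$ be a nonnegative constant.

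Next, I would apply Proposition~\ref{HL:prop1} to choose $\beta > 0$ so small that $\eta_{1,\beta}' < 0$, with corresponding positive smooth eigenfunction $\varphi$ satisfying $\frac{\partial \varphi}{\partial \nu} = -\frac{2}{p-2}\beta \varphi$ on $\partial M$. I then pick the constants $\lambda, C, \epsilon, \zeta$ \emph{in this order}: fix $\lambda < 0$ arbitrary; take $C > 0$ large enough that $|\lambda| C^{p-2} \geq |R_0|$; take $\epsilon > 0$ small enough that $\epsilon \varphi \leq C$ pointwise and $|\lambda| \epsilon^{p-2}(\sup_{\bar M}\varphi)^{p-2} \leq |\eta_{1,\beta}'|$; finally take $|\zeta|$ so small that both $|\zeta| \epsilon^{(p-2)/2}(\sup_{\bar M}\varphi)^{(p-2)/2} \leq \beta$ and the smallness condition (\ref{pre:eqn17}) of the iteration scheme are met.

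Set $u_- := \epsilon \varphi$, $u_+ := C$, $\theta_1 := -\frac{2}{p-2}\beta < 0$, $\theta_2 := 0$. The interior inequalities follow directly: for $u_-$ one computes $-a\Delta_g u_- + R_g u_- = \eta_{1,\beta}' \epsilon \varphi$, and the choice of $\epsilon$ gives $\eta_{1,\beta}' \epsilon\varphi \leq \lambda \epsilon^{p-1}\varphi^{p-1}$; for $u_+$ the condition reduces to $R_0 \geq \lambda C^{p-2}$, which holds by the choice of $C$. The sub-solution boundary inequalities hold because $\frac{\partial u_-}{\partial \nu} + \frac{2}{p-2} h u_- = -\frac{2}{p-2}\beta \epsilon\varphi = \theta_1 u_-$ and $\theta_1 u_- \leq \frac{2}{p-2}\zeta u_-^{p/2}$ by the chosen smallness of $|\zeta|$. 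For the constant super-solution, the boundary chain $\frac{2}{p-2} h C \geq 0 = \theta_2 C \geq \frac{2}{p-2}\zeta C^{p/2}$ is immediate from $h > 0$ and $\zeta < 0$. The ordering $0 \leq u_- \leq u_+$ is built into the construction, so all hypotheses of Theorem~\ref{pre:thm4} are satisfied, yielding a positive $u \in \calC^\infty(M) \cap \calC^{1,\alpha}(\bar M)$ solving (\ref{HL:eqn1}) with these $\lambda, \zeta < 0$; the trace-theorem bootstrap appearing at the end of the proof of Theorem~\ref{pre:thm4} then extends smoothness up to $\partial M$, giving the desired Yamabe metric $\tilde g = u^{p-2} g$.

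The main technical obstacle is the simultaneous compatibility of several smallness conditions that point in competing directions: the iteration scheme forces $|\zeta| = \sup_{\bar M}|H|$ to be small, yet the boundary sub-solution inequality forces $|\zeta|$ small only relative to $\beta \cdot \epsilon^{-(p-2)/2}$, while the interior sub-solution requires $\epsilon^{p-2}$ small relative to $|\eta_{1,\beta}'|/|\lambda|$, and the super-solution requires $C^{p-2}$ large relative to $|R_0|/|\lambda|$. Fixing $\lambda$ first, then $C$, then $\epsilon$, and choosing $\zeta$ last makes these cascaded inequalities consistent; once this ordering is carried out carefully, the rest of the argument is a direct application of the already-established machinery.
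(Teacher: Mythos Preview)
Your proposal is correct and follows essentially the same strategy as the paper: normalize via Theorem~\ref{HL:thm1}(ii) to constant $h_g>0$, take the perturbed eigenfunction from Proposition~\ref{HL:prop1} as sub-solution and a large constant as super-solution, then apply Theorem~\ref{pre:thm4}. The only difference is cosmetic: the paper uses $u_-=\varphi$ directly and constrains $\lambda$ to be small in absolute value, whereas you introduce the scaling $u_-=\epsilon\varphi$ to allow $\lambda<0$ arbitrary; both orderings work.

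One small slip: you write that $\varphi$ satisfies $\frac{\partial\varphi}{\partial\nu}=-\frac{2}{p-2}\beta\varphi$, but from (\ref{HL:eqn2}) it is $\frac{\partial\varphi}{\partial\nu}=-\frac{2}{p-2}(h+\beta)\varphi$. Your subsequent computation $\frac{\partial u_-}{\partial\nu}+\frac{2}{p-2}h u_-=-\frac{2}{p-2}\beta\epsilon\varphi$ is nonetheless correct, so this is only a typo and does not affect the argument.
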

\begin{proof} We construct lower solution and upper solutions of the following PDE
\begin{equation}\label{HL:eqn4}
-a\Delta_{g} u + R_{g} u = \lambda u^{p-1} \; {\rm in} \; M, \frac{\partial u}{\partial \nu} + \frac{2}{p-2} h_{g} u = \frac{2}{p-2} \zeta u^{\frac{p}{2}} \; {\rm on} \; \partial M
\end{equation}
for some appropriate choices of $ \lambda $ and $ \zeta $. According to the proof of the Han-Li conjecture in Theorem \ref{HL:thm1}, we may assume that $ h_{g} = h > 0 $ for some positive constant $ h $, since otherwise we apply pointwise conformal change to $ g $. Choosing $ \beta > 0 $ small enough so that the conclusion and the equation (\ref{HL:eqn2}) in Proposition \ref{HL:prop1} holds. Fix this $ \beta $. It follows that we can choose some constant $ \lambda < 0 $ such that
\begin{equation*}
-a\Delta_{g} \varphi + R_{g} \varphi = \eta_{1, \beta}' \varphi \leqslant \lambda \varphi^{p-1} \; {\rm in} \; M.
\end{equation*}
With fixed $ \lambda < 0 $, we choose $ \zeta < 0 $ with small enough absolute value so that the requirement in (\ref{pre:eqn17}) holds, we can also make $ \lvert \zeta \rvert $ even smaller if necessary so that
\begin{equation*}
\frac{\partial \varphi}{\partial \nu} + \frac{2}{p-2} h_{g} \varphi = -\frac{2}{p-2} \beta \varphi \leqslant \frac{2}{p-2} \zeta \varphi^{\frac{p}{2}}.
\end{equation*}
The inequality above holds for small enough $ \lvert \zeta \rvert $ since $ \zeta < 0 $. We then define
\begin{equation}\label{HL:eqn5}
u_{-} : = \varphi \; {\rm on} \; \bar{M}
\end{equation}
as a lower solution of (\ref{HL:eqn4}). For upper solution, we take
\begin{equation}\label{HL:eqn6}
u_{+} : = C \gg1 \; {\rm on} \; \bar{M}
\end{equation}
When the constant $ C $ is large enough, it is clear that
\begin{equation*}
-a\Delta_{g} u_{+} + R_{g} u_{+} = R_{g} C \geqslant \lambda C^{p-1} \; {\rm in} \; M
\end{equation*}
since both $ R_{g} $ and $ \lambda $ are negative. On $ \partial M $, we check that
\begin{equation*}
-\frac{\partial u_{+}}{\partial \nu} + \frac{2}{p-2} h_{g} u_{+} = \frac{2}{p-2} h C \geqslant 0 \geqslant \frac{2}{p-2} \zeta u_{+}^{\frac{p}{2}}.
\end{equation*}
We also require the constant $ C \geqslant \sup_{\bar{M}} \varphi = \sup_{\bar{M}} u_{-} $. Since both $ u_{+} $ and $ u_{-} $ are smooth, $ 0 < u_{-} \leqslant u_{+} $, we apply Theorem \ref{pre:thm4} and conclude that (\ref{HL:eqn4}) has a positive, smooth solution.
\end{proof}
\medskip

As we know, the negative first eigenvalue case is relatively easy case in the sense that the global lower solution and upper solution are not hard to find out. As we have shown in \cite{XU4}, \cite{XU5}, \cite{XU6}, \cite{XU7} and \cite{XU3}, it is not easy to apply monotone iteration scheme when the first eigenvalue is positive. In particular, there is a difference between prescribing constant and non-constant scalar curvature functions. In order to show the generalization of the Han-Li conjecture, we need the solution of a perturbed local Yamabe equation.
\begin{proposition}\label{HL:prop3}\cite[Prop.~3.3]{XU3}
Let $ (\Omega, g) $ be Riemannian domain in $\R^n$, $ n \geqslant 3 $, with $C^{\infty} $ boundary, and with ${\rm Vol}_g(\Omega)$ and the Euclidean diameter of $\Omega$ sufficiently small. Let $ \beta < 0 $ be any constant. Assume $ S_{g} < 0 $ within the small enough closed domain $ \bar{\Omega} $. Then for any $ \lambda > 0 $, the following Dirichlet problem
\begin{equation}\label{HL:eqn8}
-a\Delta_{g} u + \left(S_{g} + \beta \right) u = \lambda u^{p-1} \; {\rm in} \; \Omega, u = 0 \; {\rm on} \; \partial \Omega.
\end{equation}
has a real, positive, smooth solution $ u \in \calC^{\infty}(\Omega) \cap H_{0}^{1}(\Omega, g) $ in a very small domain $ \Omega $ that vanishes at $ \partial \Omega $.
\end{proposition}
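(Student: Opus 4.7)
The plan is to invoke Wang's existence theorem (Theorem \ref{local:thm1}) after recasting the PDE in divergence form. Multiplying through by $\sqrt{\det(g)}$, the equation becomes
\begin{equation*}
-\partial_{i}\bigl(a\sqrt{\det(g)}\,g^{ij}\partial_{j} u\bigr) = \lambda\sqrt{\det(g)}\,u^{p-1} + \bigl(-(R_{g}+\beta)\bigr)\sqrt{\det(g)}\,u,
\end{equation*}
which fits Wang's framework with $a_{ij} = a\sqrt{\det(g)}\,g^{ij}$, $b(x) = \lambda\sqrt{\det(g)}$, and $f(x,u) = -(R_{g}+\beta)\sqrt{\det(g)}\,u$. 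Since $R_{g}<0$ and $\beta<0$ on $\bar\Omega$, the coefficient $-(R_{g}+\beta)$ is strictly positive, so $f(x,u)\geqslant 0$ for $u\geqslant 0$ as required, and $b$ is a positive constant times the volume factor.

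The verification of (P1)--(P4) is direct. (P1) ellipticity is immediate on a small normal coordinate ball where $g^{ij}$ is uniformly close to $\delta^{ij}$. (P2) is trivial since $f(x,u)/u^{p-1} = O(u^{2-p})\to 0$ as $u\to\infty$. (P3) asks $\lim_{u\to 0^{+}} f/u < \lambda_{1}(L)$; the hypothesis $\lambda_{1}\to\infty$ as $\Omega$ shrinks makes this automatic once $\text{Vol}_{g}(\Omega)$ is small enough. (P4) is vacuous because $b(x)=\lambda\sqrt{\det(g)}$ is uniformly bounded away from zero: one picks $\sigma<\inf_{\bar\Omega} b$ so that $\Omega(\sigma)=\emptyset$ and the condition holds trivially.

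The main content is the strict gap inequality $K<K_{0}$, which by the standard reformulation (as in Proposition \ref{local:prop2}) reduces to exhibiting a test function $u\in \calC_{c}^{\infty}(\Omega)$ with
\begin{equation*}
J_{2,\beta,\Omega}(u) := \frac{\int_{\Omega}\sqrt{\det(g)}\,g^{ij}\partial_{i}u\,\partial_{j}u\,dx + \frac{1}{a}\int_{\Omega}\sqrt{\det(g)}\,(R_{g}+\beta)u^{2}\,dx}{\bigl(\int_{\Omega}\sqrt{\det(g)}\,u^{p}\,dx\bigr)^{2/p}} < \lambda^{(2-n)/n}\,T.
\end{equation*}
The critical observation is that $R_{g}+\beta<0$ throughout $\bar\Omega$, so the linear term is a strictly negative contribution to the numerator; this is in sharp contrast with the Yamabe problem, where one typically needs Weyl-tensor nonvanishing. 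Using the Aubin bubble $u_{\epsilon}(x)=\varphi_{\Omega}(x)/(\epsilon+|x|^{2})^{(n-2)/2}$ with a radial cutoff $\varphi_{\Omega}$ for $n\geqslant 4$, and $u_{\epsilon}(x)=\cos(\pi|x|/(2r))$ for $n=3$, the standard concentration calculation shows the quotient without the perturbation approaches $T$ as $\epsilon\to 0$, while the negative contribution $\frac{1}{a}\int(R_{g}+\beta)u_{\epsilon}^{2}$ remains bounded below in absolute value by a quantity of order $\epsilon$ (in higher dimensions) or $|\log\epsilon|$ (in low dimensions). Calibrating $\epsilon$ against the diameter of $\Omega$ then yields the strict inequality for any fixed $\lambda>0$.

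Applying Theorem \ref{local:thm1} produces a positive weak solution $u\in H_{0}^{1}(\Omega,g)\cap \calC^{0}(\bar\Omega)$ with $J(u)\leqslant K$. Standard $\calL^{q}$-regularity applied to $u^{p-1}\in \calL^{q}$ and Schauder bootstrapping upgrade $u$ to $\calC^{\infty}(\Omega)$; positivity follows from the strong maximum principle together with $u\not\equiv 0$. The main technical obstacle I expect is the sharp test-function estimate in the third paragraph, particularly in the low-dimensional cases $n=3,4,5$; however, because $R_{g}+\beta$ is uniformly bounded away from zero on $\bar\Omega$, the negative perturbation provides enough slack to close the inequality uniformly for all small enough $\Omega$, sidestepping the more delicate analysis needed in the unperturbed Yamabe setting.
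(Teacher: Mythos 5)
Your proposal follows essentially the same route the paper takes: the paper gives no in-text proof of this proposition (it is imported verbatim from \cite[Prop.~3.3]{XU3}), and its proof of the closely analogous Proposition \ref{local:prop2} uses exactly your strategy — rewrite the operator in divergence form, verify Wang's hypotheses (P1)--(P4) (with (P4) vacuous since $b$ is bounded below), and establish $K<K_{0}$ with the Aubin bubble (radial cutoff for $n\geqslant 4$, the cosine cutoff for $n=3$), the strictly negative zeroth-order coefficient $R_{g}+\beta$ supplying the gap that the Weyl tensor supplies in the unperturbed Yamabe setting. The only things worth tidying are the normalization of the threshold (for constant $\lambda$ one may rescale $u$ so the target is simply $T$ with the unweighted denominator, rather than $\lambda^{(2-n)/n}T$) and the stated orders of the negative contribution after dividing by $\bigl(\int u_{\epsilon}^{p}\bigr)^{2/p}$; the sharp comparison between that contribution and the same-order metric-expansion corrections is precisely the computation the paper itself defers to Appendix A of \cite{XU3}.
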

We are ready to show the extension of the Han-Li conjecture in Case (ii), i.e. $ \eta_{1}' > 0 $. Note that there is a significant difference between constant and non-constant prescribing scalar curvatures. When $ S $ is not a globally constant, we can use the local solution in Proposition \ref{local:prop2} and the gluing procedure in \cite[Lemma~3.2]{XU6} to construct the lower and upper solutions. When $ S  = \lambda > 0 $ on $ M $, we have to introduce a perturbed Yamabe operator $ -a\Delta_{g} + R_{g} + \beta $ for $ \beta < 0 $, solving the perturbed Yamabe equation first, then take the limit as $ \beta \rightarrow 0^{-} $. In this case, the local solution of (\ref{HL:eqn8}) is the key.
\begin{theorem}\label{HL:thm3}
Let $ (\bar{M}, g) $ be a compact Riemannian manifold with non-empty smooth boundary $ \partial M $, $ n = \dim \bar{M} \geqslant 3 $. If $ \eta_{1}' > 0 $, there exist some negative constants $ \lambda> 0, \zeta < 0 $ such that the PDE (\ref{HL:eqn1}) with $ S = \lambda $, $ H = \zeta $ admits a positive, smooth solution $ u $ on $ \bar{M} $. Equivalently, there exists a Yamabe metric $ \tilde{g} = u^{p-2} g $ such that $ R_{\tilde{g}} = \lambda > 0 $ and $ h_{\tilde{g}} = \zeta < 0 $.
\end{theorem}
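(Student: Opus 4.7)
The plan is to adapt the positive constant scalar curvature strategy referenced in the paragraph preceding the theorem: work with a perturbed conformal Laplacian $-a\Delta_g + (R_g + \beta)$ with $\beta < 0$ small, build sub- and super-solutions using Proposition \ref{HL:prop3} together with a gluing construction, apply Corollary \ref{pre:cor1} to obtain a solution of the perturbed problem, and then take $\beta \to 0^-$. The case $\zeta < 0$ is in fact easier on the boundary than the original $\zeta > 0$ Han-Li case (the boundary supersolution inequality $\theta_2 u_+ \geq \frac{2}{p-2}\zeta u_+^{p/2}$ is automatic with $\theta_2 = 0$ whenever $u_+ > 0$), so the effort concentrates on the interior construction.

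Setup and lower solution: By Theorem \ref{HL:thm1}(iii) I may assume, after a preliminary conformal change, that $h_g = h > 0$ is a positive constant. Next, by a variant of Proposition \ref{zero:prop2} for compact manifolds with boundary (a localized interior conformal perturbation preserving the Robin structure on $\partial M$), I pass to a conformal representative in which $R_g < 0$ on some small interior open set $\Omega \subset \bar\Omega \subset M$, while still $\eta_1' > 0$ and $h_g = h$. Pick $\beta < 0$ with $|\beta|$ small enough that $R_g + \beta < 0$ on $\bar\Omega$ and, by Proposition \ref{HL:prop2}, $\eta_{1,\beta}' > 0$. For any $\lambda > 0$, Proposition \ref{HL:prop3} provides a positive smooth solution $v$ of the Dirichlet problem for the perturbed local Yamabe equation on $\Omega$; extending $v$ by zero yields the subsolution $u_- \in \calC^0(\bar{M}) \cap H^1(M, g)$, and the Robin boundary inequality for $u_-$ is trivial since $u_- \equiv 0$ in a collar of $\partial M$.

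Upper solution and iteration: I construct the supersolution $u_+$ by gluing, as in \cite[Lemma~3.2]{XU6} and Proposition \ref{zero:prop4}. Inside a thin shell of $\partial \Omega$ (within $\Omega$) I match $u_+$ smoothly to $v$ from above; away from $\Omega$ I set $u_+$ equal to a small positive constant $c \geq \sup_\Omega v$, arranged so that on $\bar M \setminus \Omega$ one has $R_g + \beta > 0$ (from the localization of the negative region). Choosing $\lambda > 0$ small enough that $(R_g + \beta) c \geq \lambda c^{p-1}$ throughout $\bar M \setminus \Omega$, the interior supersolution inequality holds globally. The boundary inequality reduces to $\tfrac{2}{p-2} h c \geq 0 \geq \tfrac{2}{p-2}\zeta c^{p/2}$, which is automatic for any $\zeta < 0$, and $\theta_2 = 0$ is permissible since $H = \zeta < 0$. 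I then invoke Corollary \ref{pre:cor1}, with $|\zeta|$ chosen so small that the smallness condition (\ref{pre:eqn17}) holds uniformly for $\beta \in (\beta_0, 0)$, to obtain a positive smooth solution $u_\beta$ of the perturbed problem.

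Limit and regularity: Since $0 < u_- \leq u_\beta \leq u_+$ uniformly in $\beta \in (\beta_0, 0)$ and the coefficients are smooth, Theorem \ref{pre:thm2} produces a uniform $W^{2,q}$-bound on $\{u_\beta\}$ with $q > n$. Arzel\`a-Ascoli extracts a $\calC^{1,\alpha}(\bar M)$-convergent subsequence whose limit $u$ satisfies (\ref{HL:eqn1}) with $S = \lambda$, $H = \zeta$ in the strong sense; standard Schauder bootstrapping together with the trace theorem (as at the end of the proof of Theorem \ref{pre:thm4}) gives $u \in \calC^\infty(M) \cap \calC^{1,\alpha}(\bar M)$, and the strong maximum principle combined with $u \geq u_-$ forces $u > 0$ throughout $\bar M$. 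The principal obstacle is precisely the interior supersolution construction: a naive constant $u_+ = c$ would require $(R_g + \beta) c \geq \lambda c^{p-1}$, which rules out a negative region for $R_g + \beta$, yet Proposition \ref{HL:prop3} demands exactly such a region for the local Yamabe subsolution. The gluing of \cite[Lemma~3.2]{XU6} reconciles these requirements, and the delicate point is to ensure the matching is smooth and the resulting $W^{2,q}$-estimates from Corollary \ref{pre:cor1} are uniform in $\beta$ so that the limit as $\beta \to 0^-$ can be taken.
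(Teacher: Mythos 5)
Your overall architecture (perturb $R_g$ by $\beta<0$, build $u_\pm$ with Proposition \ref{HL:prop3} plus gluing, invoke Corollary \ref{pre:cor1}, then let $\beta\to 0^-$) matches the paper, and your treatment of the subsolution, the boundary inequalities, and the limiting step is fine. The gap is in the supersolution outside $\Omega$. You take $u_+\equiv c$ on $\bar M\setminus\Omega$ and need $(R_g+\beta)\,c\geqslant \lambda c^{p-1}$ there, which forces $R_g+\beta\geqslant \lambda c^{p-2}$. Two things go wrong. First, $R_g+\beta>0$ on $\bar M\setminus\Omega$ is not delivered by the results you cite: Proposition \ref{zero:prop2} only asserts that some conformal representative has scalar curvature negative near $P$; it says nothing about positivity (let alone a uniform positive lower bound exceeding $|\beta|$) on the complement, and the paper never establishes or uses such a bound. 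Second, and more fatally, there is a scaling obstruction: the constraint $c\geqslant\sup_\Omega v$ ties $c$ to the local solution $v=v_\lambda$ of $-a\Delta_g v+(R_g+\beta)v=\lambda v^{p-1}$, and since $tv_\lambda$ solves the same equation with $\lambda t^{2-p}$, one has $\sup v_\lambda\sim\lambda^{-1/(p-2)}$, so $\lambda c^{p-2}\geqslant\lambda(\sup v_\lambda)^{p-2}$ is bounded below by a positive constant \emph{independent of} $\lambda$. You therefore cannot close the loop by ``choosing $\lambda$ small''; the required inequality $R_g+\beta\geqslant\lambda c^{p-2}$ demands a fixed amount of positivity of $R_g+\beta$ that shrinking $\lambda$ does not buy.

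The paper avoids this by never asking $R_g+\beta$ to have a sign outside $\Omega$: the outer supersolution is the first eigenfunction $\varphi$ of the perturbed Robin problem (Proposition \ref{HL:prop2}), which satisfies $-a\Delta_g\varphi+(R_g+\beta)\varphi=\eta_{1,\beta}'\varphi\geqslant\lambda\varphi^{p-1}$ once $\lambda$ is chosen with $\eta_{1,\beta}'\inf_{\bar M}\varphi>\lambda\,2^{p-2}\sup_{\bar M}\varphi^{p-1}$ (the factor $2^{p-2}$ reserving room for the gluing), and which satisfies the homogeneous Robin condition, hence $B_g\varphi=0\geqslant\frac{2}{p-2}\zeta\varphi^{p/2}$ for $\zeta<0$. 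Here $\varphi$ is fixed \emph{before} $\lambda$, so the choice of $\lambda$ is not circular, and the large local solution $u_0$ is confined to $\Omega$, where $R_g+\beta<0$ and $u_0$ solves the equation exactly. You should replace your constant outer supersolution by this eigenfunction construction; the rest of your argument then goes through as written.
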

\begin{proof}
Here we have no restriction on the choice of initial mean curvature $ h_{g} $ for the construction of the upper solution since the target mean curvature is negative. We also have no restriction for the construction of the lower solution since the lower solution will be identically zero within a collar region containing $ \partial M $. By Proposition \ref{zero:prop2}, also see \cite[Thm.~4.6]{XU3} and \cite[Thm.~5.7]{XU4}, we may assume that the initial metric has scalar curvature $ R_{g} $ that is negative somewhere and mean curvature $ h_{g} > 0 $ everywhere on $ \partial M $. Note that the small enough region on which $ R_{g} < 0 $ can be arbitrarily chosen. The following argument is essentially the same as in \cite[Thm.~6.3, Thm.~6.4, Prop.~6.1]{XU5}, so we only give a very concise sketch here. By Proposition \ref{HL:prop2} above, we pick up a small enough constant $ \beta < 0 $ and consider the eigenvalue problem
\begin{equation*}
-a\Delta_{g} \varphi + \left(R_{g} + \beta \right) \varphi = \eta_{1, \beta}' \varphi \; {\rm in} \; M, \frac{\partial \varphi}{\partial \nu} + \frac{2}{p-2} h_{g} \varphi = 0 \; {\rm on} \; \partial M.
\end{equation*}
Choose $ \lambda > 0 $ so that
\begin{equation*}
\eta_{1, \beta}' \inf_{\bar{M}} \varphi > \lambda \cdot 2^{p-2} \cdot \sup_{\bar{M}} \varphi^{p-1}.
\end{equation*}
Fix this $ \lambda $. Note that we need the strict inequality above to allow some room for the gluing procedure in the construction of the upper solution. This can be done since we have assumed that $ R_{g} $ is negative somewhere; in addition, the introduction of $ \beta $ breaks down the conformal invariance.

We now apply Proposition \ref{HL:prop3} to obtain a local solution $ u_{0} $ of (\ref{HL:eqn8}) on a small enough domain $ \Omega $ on which $ R_{g} < 0 $, with the fixed $ f = \lambda $.
Set
\begin{equation}\label{HL:eqn11}
u_{-} : = \begin{cases} u_{0} & \; {\rm in} \; \Omega \\ 0 \; {\rm in} \; \bar{M} \backslash \bar{\Omega} \end{cases}.
\end{equation}
It is clear that $ u_{-} $ is a lower solution of
\begin{equation}\label{HL:eqn9}
-a\Delta_{g} u + \left( R_{g} + \beta \right) u = \lambda u^{p-1} \; {\rm in} \; M, \frac{\partial u}{\partial \nu} + \frac{2}{p - 2} h_{g} u = \frac{2}{p-2} \cdot \zeta u^{\frac{p}{2}} \; {\rm on} \; \partial M.
\end{equation}
Note that the boundary condition holds since $ u_{-} \equiv 0 $ within a collar region containing $ \partial M $. Furthermore, $ u_{-} \in H^{1}(M, g) \cap \calC^{0}(\bar{M}) $. For details, see e.g. Theorem 6.3 of \cite{XU5}. For upper solution, we glue the two functions $ u_{0} $ and $ \varphi $ in $ \Omega $ together to obtain a new function $ u_{1} \in \calC^{\infty}(\bar{\Omega}) $ which satisfies
\begin{equation*}
-a\Delta_{g} u_{1} + \left(R_{g} + \beta \right) u_{1} \geqslant \lambda u_{1}^{p-1} \; {\rm in} \; \Omega, u_{1} = \varphi \; {\rm on} \; \partial \Omega, u_{1} \geqslant u_{-} \; {\rm in} \; \bar{\Omega}.
\end{equation*}
The gluing strategy has been repeatedly used in many previous papers, for details, see e.g. Theorem 6.3 of \cite{XU5}. Set
\begin{equation}\label{HL:eqn12}
u_{+} : = \begin{cases} u_{1} & \; {\rm in} \; \Omega \\ \varphi \; {\rm in} \; \bar{M} \backslash \bar{\Omega} \end{cases}.
\end{equation}
Since $ u_{1} = \varphi $ near $ \partial \Omega $, we conclude that $ u_{+} $ is smooth on $ \bar{M} $. Due to the choice of $ \lambda $, it is clear that
\begin{equation*}
-a\Delta_{g} u_{+} + \left( R_{g} + \beta \right) u_{+} \geqslant \lambda u_{+}^{p-1} \; {\rm in} \; M.
\end{equation*}
Since $ \zeta < 0 $, the boundary condition satisfies
\begin{equation*}
\frac{\partial u_{+}}{\partial \nu} + \frac{2}{p-2} h_{g} u_{+} = 0 \geqslant \frac{2}{p-2} \zeta u_{+}^{\frac{p}{2}}, \forall \zeta < 0.
\end{equation*}
Furthermore, we have $ 0 \leqslant u_{-} \leqslant u_{+} $ with $ u_{-} \not\equiv 0 $. Choosing $ \zeta $ with small enough absolute value so that (\ref{pre:eqn17}) holds, we apply Theorem \ref{pre:cor1} and conclude that (\ref{HL:eqn9}) has a positive solution $ u_{\beta} \in \calC^{\infty}(\bar{M}) $ for appropriate choices of $ \lambda $ and $ \zeta $. Choosing a threshold $ \beta_{0} < 0 $, $ \lvert \beta_{0} \rvert $ small enough, every $ \beta \in (\beta_{0}, 0) $ associates with a solution $ u_{\beta} $ of (\ref{HL:eqn9}). By the same argument as in \cite[Prop.~6.1]{XU5}, we can show that 
\begin{equation*}
\lVert u_{\beta} \rVert_{\calL^{r}(M, g)} \leqslant C, \lVert u_{\beta} \rVert_{\calL^{p}(M, g)} \in [a, b], \forall \beta \in (\beta_{0}, 0).
\end{equation*}
Here $ r > p $, $ C, a, b $ are some positive constants. According to Arzela-Ascoli, there exists a subsequence of $ \lbrace u_{\beta} \rbrace $, uniformly bounded in $ \calC^{2, \alpha} $-norm for some $ \alpha \in (0, 1) $ due to standard bootstrapping method, that converges to a limit $ u $ in the classical sense. By the same argument in \cite[Thm.~6.4]{XU5}, the limit $ u \in \calC^{\infty}(\bar{M}) $ solves (\ref{HL:eqn1}) with $ S = \lambda > 0 $ and $ H = \zeta < 0 $, as chosen above.
\end{proof}
\medskip

\bibliographystyle{plain}
\bibliography{YamabessKWNS}

\end{document}